\documentclass[reqno, a4paper]{amsart}
\pdfoutput=1
\usepackage{amsmath}    
\usepackage{amsthm}     
\usepackage{amssymb}    
\usepackage{amsfonts}
\usepackage{float}
\usepackage{graphics}
\usepackage{graphicx}




 \usepackage{bbm}
 \usepackage{enumerate}
 \usepackage{mathptmx}
 \usepackage{mathrsfs}
 \usepackage{listings}
\usepackage{ulem}
\newtheorem{lemma}{Lemma}[section]
\newtheorem{theorem}{Theorem}[section]
\newtheorem{corollary}{Corollary}[section]

\newtheorem{proposition}{Proposition}[section]

\numberwithin{equation}{section}

\newcommand{\be}{\begin{equation}} \newcommand{\ee}{\end{equation}}
\newcommand{\bd}{\begin{displaymath}} \newcommand{\ed}{\end{displaymath}}
\newcommand{\ba}{\begin{align}} \newcommand{\ea}{\end{align}}
\newcommand{\baa}{\begin{align*}} \newcommand{\eaa}{\end{align*}}
\newcommand{\ben}{\begin{enumerate}} \newcommand{\een}{\end{enumerate}}
\newcommand{\bi}{\begin{itemize}} \newcommand{\ei}{\end{itemize}}

\newcommand{\ud}{\mathrm{d}}

\def\newhbar{{h\mkern-9mu^{-}}}

\def\vec#1{{\bf #1}}




\newcommand{\abs}[1]{\lvert#1\rvert}
\newcommand{\norm}[1]{\|#1\|}


\begin{document}

\title[Convergence of finite volume scheme for Poisson's equation] 
{Convergence of finite volume scheme for three dimensional 
Poisson's equation}
\author[M. Asadzadeh]
{Mohammad Asadzadeh$^{1}$}



\thanks{
 The first author was partially supported by the 
Swedish Research Council (VR) and the 
 Swedish Foundation of Strategic Research (SSF)
in Gothenburg Mathematical Modeling Centre  (GMMC). The second author 
was supported by the Centre for Theoretical Biology at the University of Gothenburg, 
Svenska Institutets \"Ostersj\"osamarbete
scholarship nr. 11142/2013, 
Stiftelsen f\"or Vetenskaplig Forskning och Utbildning i Matematik
(Foundation for Scientific Research and Education in Mathematics), 
Knut and Alice Wallenbergs travel fund, Paul and Marie Berghaus fund, the Royal Swedish 
Academy of Sciences,
and Wilhelm and Martina Lundgrens research fund.}

\address{
$^{1}$
Department of Mathematics,
Chalmers University of Technology and  G\"oteborg University,
SE--412 96, G\"oteborg, Sweden
}
\email{mohammad@chalmers.se}

\author[K. Bartoszek]{Krzysztof Bartoszek 
$^{2}$}
\address{
$^{2}$
Department of Mathematics,
Uppsala University, SE--751 06 Uppsala, Sweden
}
\email{bartoszekkj@gmail.com}




\begin{abstract}
We construct and analyze a finite volume scheme for numerical solution of 
a three-dimensional Poisson equation. 
This is an extension of a two-dimensional approach by S\"uli \cite{Suli:91}. 
Here we derive optimal convergence 
rates in the discrete $H^1$ norm and sub-optimal convergence 
in the maximum norm, where we 
use the maximal available regularity of the exact solution and 
minimal smoothness requirement on the source term. 
We also find a gap in the proof of a key estimate in a reference 
in  \cite{Suli:91} 
for which we present a modified and completed 
 proof. 
Finally, the theoretical results derived in the paper are justified through 
implementing some canonical examples in 3D. 
\end{abstract} 
\maketitle
{\small {\sl Keywords: Finite volume method, Poisson's equation, 
stability estimates, convergence rates}.}

\section{Introduction}\label{intro}

Our motivation for the numerical study of the classical Poisson equation 
stems from its appearance 
in the coupled system of PDEs involving the Vlasov type equations of plasma 
physics with a wide range of application areas, especially
in modelling plasma of 
 Coulomb particles. In this setting 
the common approach has been to consider a continuous Poisson 
solver 
and focus the approximation strategy on the 
study of the associated hyperbolic equations in the system of, e.g. 
Vlasov-Poisson-Fokker-Planck (VPFP) or 
Vlasov-Maxwell-Fokker-Planck (VMFP) equations. 
However, for a system of PDEs involving both 
elliptic and hyperbolic equations, a discrete scheme for the hyperbolic 
equations combined with the continuous solution for the 
elliptic parts requires an unrealistically 
fine degree of resolution for the mesh size of the discretized part. 
Such a combination causes an excessive amount of unnecessary 
computational costs. Indeed, even with 
availability of very fast computational environment, a miss-match will appear 
due to the lack of compatibility 
between the resolution degree for the infinite dimensional 
continuous Poisson solver and a flexible  
numerical scheme for the discretized hyperbolic-type equations in the system. 

The present study concerns numerical approximations of the Poisson equation 
that completes the previous semi-analytic/semi-discrete schemes, 
for the Vlasov-type 
systems, and meanwhile is accurate enough to be comparable with the 
fully discrete numerical schemes  
for the hyperbolic system of PDEs. To this end, We construct and analyze 
a finite volume scheme, prove its stability, 
and derive optimal convergence rates in the discrete 
$H^1$ norm (corresponding to an order of ${\mathcal O(h^2)}$ for 
the exact solution in the Sobolev space $H^2_0(\Omega)$) as well as 
suboptimal convergence rates in the maximum norm 
(the maximum norm estimates are optimal in 2D) for 
the Dirichlet problem for the following 
three dimensional Poisson equation 
\be
\label{Problem}
\left\{
\begin{array}{rccl}
-\nabla^{2}u & = & f & \mathrm{in}~\Omega \\
u & = & 0 & \mathrm{on}~\partial\Omega,
\end{array}
\right. 
\ee
where $\Omega=(0,1)\times (0,1)\times (0,1)$. 

Problem \eqref{Problem} is a simplified version of the general Poisson 
equation formulated as 
\be
\label{Problem0}
\left\{
\begin{array}{rccl}
-\nabla(A\nabla u) & = & f & \mathrm{in}~\Omega, \\
u & = & 0 & \mathrm{on}~\partial\Omega,
\end{array}
\right. 
\ee
where $A$ is a conductivity matrix and  $\Omega$ is a bounded convex domain in 
${\mathbb R}^3$. To simplify  the calculus 
we have assumed that $A=I$ (the identity matrix) and 
considered the cubic Lipschitz domain $\Omega=(0,1)^3$. Note that Problem  
\eqref{Problem0} with a variable coefficient 
matrix $A$ would be much more involved and shift   
our focus away from the study of the 
Poisson operator. 
On the other hand, e.g. 
for a unifying finite element approach for VPFP, 
transferring the Poisson equation 
to a hyperbolic system yields the simple 
but less advantageous problem, (see. e.g. \cite{Arnold_etal}), 
\be
\label{Problem00}
\left\{
\begin{array}{rccl}
v & = & - \nabla u, &  \\
\mbox{ div } v & = &  f.  
\end{array}
\right. 
\ee
Therefore, considering the finite volume method (FVM) for 
the Dirichlet problem \eqref{Problem} 
we can also circumvent such inconvenient issues. 

The convergence results for Problem \eqref{Problem} 
here, considered for a cell-centered finite volume scheme in a 
quasi-uniform mesh, 
may be compared with those of  
a finite element scheme  with no quadrature procedure. 
A finite element scheme combined with a quadrature 
would cause a 
reduced convergence rate by an order of $\sim {\mathcal O} (h^{1/2})$. 
In this aspect, compared to standard finite elements, 
the usual finite volume method (as the finite difference) 
is quasi-optimal. 

The main advantage of the 
finite volume method is its  
{\sl local conservativity property} for the numerical flux. This property makes 
the finite volume method an attractive tool for approximating model problems 
emphasizing the flux, e.g.  as in the case 
of some hyperbolic PDEs describing fluid problems and 
conservation laws, see \cite{Eymard_etal:2003} 
for further details. A draw-back in FVM formulation is that,  
in higher dimensions, in addition to the expected 
theoretical challenges, the calculus is  
{\sl seemingly involved} and yields a rather lengthy and tedious 
representation. Despite this fact, 
the finite volume method has been studied for both the 
Poisson equation, fluid 
problems and other PDEs by several authors in various settings: 
e.g. the discontinuous finite volume method for second-order elliptic problems 
in two-dimensions is considered in \cite{Bi_etal:2010}, where the closeness 
of the FVM to the interior penalty method is demonstrated 
and optimal error estimates are derived in $L_2$- and $L_\infty$-norms. 
A three dimensional discrete duality finite volume scheme for nonlinear 
elliptic equations is studied in \cite{Coudiere_etal:2011}, where 
well-posedness and a priori $L_p$-error bounds  
 are discussed. These are 
$L_p$ convergence analysis with no particular consideration of 
their optimality.  A more computation oriented, second-order finite volume 
scheme in three dimensions: \cite{Wang_etal:2006}, deals with 
computing eigenvalues of a Schr\"odinger type operator.  
As another computational exposition: in 
\cite{Oevermann_etal:2009} the authors construct 
a shape interface FVM for elliptic equations on Cartesian grids 
in three dimensions with second order accuracy in 
$L_2$- and $L_\infty$-norms. 
The authors consider also variable coefficients based on 
using a particular piecewise trilinear ansatz. 
As for the fluid problems, a 3D finite volume scheme 
is  presented for the 
ideal magneto-hydrodynamics in \cite{Arminjon_etal}.  
Some theoretical analysis for the 
upwind FVM on the counter-example of Peterson, 
for a two-dimensional, time dependent advection problem, 
can be found in \cite{Bouche_etal:2010}. For a detailed study 
of the finite volume method for a compressible flow see \cite{Novotny:2004}. 

The most relevant works for our study 
are some results 
by S\"uli et. al. , e.g.  \cite{Suli:91}, for 
a two-dimensional version of our work, 
and \cite{Suli:92} and \cite{Suli_etal:97}, considering 
the accuracy of cell-vertex FVM for time-dependent advection- 
and convection-diffusion problems, 
respectively.  Finally, 
a thorough theoretical 
study for the numerical solutions of general, linear, nonlinear and quasilinear 
 elliptic problems are given by B\"ohmer in \cite{Bohmer:2010}, where 
most numerical methods are rigorously featured.

Below,  for the sake of completeness, 
we recall some classical results 
concerning the regularities connecting the solution  and 
the data for Problem \eqref{Problem} in different geometries. 
First we state these results in  ${\mathbb R}^n$ and then for 
an open set $\Omega\subset {\mathbb R}^n$ with smooth boundary. 
For details we refer the reader to, e.g. Folland \cite{Folland:76}. 
In Propositions \ref{ThmFolland1}-\ref{ThmFolland_Sobolev3} below,  
$\Omega$ 
is assumed to have a smooth boundary. 

\begin{proposition}\label{ThmFolland1}
Suppose $f\in L_1({\mathbb R}^n)$, and also that 
$\int_{\vert{x}\vert>1}\vert{f(x)}\vert\log \vert{x}\vert\, dx <\infty$ 
for $n=2$. Let $N$ be the fundamental solution 
of the $-\nabla^{2}$ operator:  $-\nabla^{2}N=\delta$.
Then $u=f*N$ is locally integrable and is a distribution solution for  
$-\nabla^{2}u = f$.
\end{proposition}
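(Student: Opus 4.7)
The plan is to first verify local integrability of $u=f\ast N$ and then pass the Laplacian onto $N$ using Fubini, exploiting the fact that $-\nabla^{2}N=\delta$ as a distribution. Recall the explicit form of the fundamental solution: $N(x)=-\frac{1}{2\pi}\log|x|$ for $n=2$ and $N(x)=\frac{c_n}{|x|^{n-2}}$ for $n\geq 3$. In either case $N$ has only a weak singularity at the origin, so $N\in L^{1}_{\mathrm{loc}}({\mathbb R}^{n})$, and its behavior at infinity is $O(1)$ for $n\geq 3$ and $O(\log|x|)$ for $n=2$. These two facts will drive the whole argument.

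For local integrability, I would pick any test function $\varphi\in C_{c}^{\infty}({\mathbb R}^{n})$ with support in a ball $B_{R}$ and use Tonelli's theorem to write
\baa
\int_{{\mathbb R}^{n}}|u(x)||\varphi(x)|\,dx\;\leq\; \int_{{\mathbb R}^{n}}|f(y)|\,F(y)\,dy,\qquad F(y):=\int_{B_R}|N(x-y)||\varphi(x)|\,dx.
\eaa
A split of the inner integral into $|x-y|\leq 1$ and $|x-y|>1$ shows that the singular part contributes a bounded constant (since $N\in L^{1}_{\mathrm{loc}}$), while the outer part is uniformly bounded in $y$ when $n\geq 3$ and grows at most like $C(1+\log(1+|y|))$ when $n=2$. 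In both cases the right-hand side is finite: for $n\geq 3$ because $f\in L^{1}$, and for $n=2$ precisely because of the assumed log-moment condition $\int_{|x|>1}|f|\log|x|\,dx<\infty$. Hence $u\in L^{1}_{\mathrm{loc}}$.

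The distributional identity $-\nabla^{2}u=f$ then follows by transposing the Laplacian. For $\varphi\in C_{c}^{\infty}({\mathbb R}^{n})$, the integrability estimate just established justifies Fubini and yields
\baa
\langle u,-\nabla^{2}\varphi\rangle=\int f(y)\Big(\int N(z)\,(-\nabla^{2}\varphi)(z+y)\,dz\Big)\,dy.
\eaa
For each fixed $y$, the inner bracket equals $\langle -\nabla^{2}N,\varphi(\,\cdot\,+y)\rangle=\langle\delta,\varphi(\,\cdot\,+y)\rangle=\varphi(y)$, since $\varphi(\,\cdot\,+y)\in C_{c}^{\infty}$ and $-\nabla^{2}N=\delta$. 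Hence $\langle u,-\nabla^{2}\varphi\rangle=\int f(y)\varphi(y)\,dy=\langle f,\varphi\rangle$, which is the desired distributional equation.

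The principal obstacle is the two-dimensional case: the fundamental solution grows at infinity, so neither $N\in L^{1}$ nor Young's inequality for $L^{1}\ast L^{1}_{\mathrm{loc}}$ can be invoked directly. The log-moment hypothesis is exactly what is needed to dominate the growth of $F(y)$ at infinity; without it, the Fubini step in the computation above would fail. Everything else is routine once local integrability is in hand.
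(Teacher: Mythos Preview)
Your proof is correct. Note, however, that the paper does not actually prove this proposition: it is stated as a classical result recalled ``for the sake of completeness'', with the reader referred to Folland \cite{Folland:76} for details. Your argument is precisely the standard one found in that reference --- local integrability via a near/far splitting of the kernel (with the log-moment hypothesis compensating for the logarithmic growth of $N$ at infinity when $n=2$), followed by Fubini and the defining property $-\nabla^{2}N=\delta$ applied to the translated test function --- so there is no discrepancy to discuss.
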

\begin{proposition}\label{ThmFolland2} 
If $f$ satisfies the conditions of Proposition \ref{ThmFolland1} and 
in addition $f$ is ${\mathcal C}^\alpha(\Omega)$ for some $\alpha\in (0,1)$ 
on some open set $\Omega$, then 
$u=f*N$ is  ${\mathcal C}^{2+\alpha}$ on $\Omega$. 
\end{proposition}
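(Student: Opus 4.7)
The plan is to reduce the statement to the classical H\"older regularity theory for the Newtonian potential by a localization argument, and then to invoke the Calder\'on--Zygmund / Schauder-type estimate for the second derivatives of $f*N$.

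First I would fix an arbitrary $x_0 \in \Omega$ and choose bounded open sets $U \Subset V \Subset \Omega$ containing $x_0$. Pick a cutoff function $\chi \in C^\infty_c(\Omega)$ with $\chi \equiv 1$ on $\overline{V}$, and split $f = f_1 + f_2$ where $f_1 := \chi f$ and $f_2 := (1-\chi)f$. Correspondingly $u = u_1 + u_2$ with $u_i = f_i * N$. The function $f_1$ is compactly supported and belongs to $\mathcal{C}^\alpha(\mathbb{R}^n)$, while $f_2$ vanishes on $\overline{V}$ but still satisfies the hypotheses of Proposition \ref{ThmFolland1}.

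The term $u_2$ is handled easily: for $x \in U$ the support of $f_2$ stays at positive distance from $x$, so the integrand $f_2(y)N(x-y)$ is jointly smooth in $x$ with estimates uniform in $y$, and $u_2 \in C^\infty(U)$ by differentiation under the integral sign. Hence the content of the proposition is concentrated in $u_1$, and it suffices to establish the classical statement that the convolution of a compactly supported $\mathcal{C}^\alpha$ function with the Newtonian kernel $N$ lies in $\mathcal{C}^{2+\alpha}$.

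For $u_1$ I would first show, using the local integrability of $\nabla N$, that $\nabla u_1 = f_1 * \nabla N$ in the classical sense. Second derivatives require more care because $\partial_i\partial_j N$ is a Calder\'on--Zygmund kernel and fails to be locally integrable near the origin. The standard device is to subtract $f_1(x)$ before differentiating under the integral: on a ball $B_R$ containing $\mathrm{supp}(f_1)$, one writes
\begin{equation*}
\partial_i\partial_j u_1(x) = \int_{B_R} \partial_i\partial_j N(x-y)\bigl(f_1(y)-f_1(x)\bigr)\,dy - f_1(x)\int_{\partial B_R} \partial_i N(x-y)\,\nu_j(y)\,dS(y),
\end{equation*}
where $\nu$ is the outward unit normal. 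The gain $f_1(y)-f_1(x) = O(|y-x|^\alpha)$ tames the $|y-x|^{-n}$ singularity of $\partial_i\partial_j N$, so the integral is absolutely convergent and well defined pointwise.

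The main obstacle, and the genuinely technical step, is proving that each of these second derivatives is itself $\mathcal{C}^\alpha$ on $U$. For $x,x' \in U$ I would estimate $\partial_i\partial_j u_1(x) - \partial_i\partial_j u_1(x')$ by splitting the domain of integration into $B_{2|x-x'|}(x)$ and its complement. On the small ball the H\"older bound on $f_1$ together with the homogeneity of $\partial_i\partial_j N$ yields an estimate of order $|x-x'|^\alpha$ directly; on the complement one applies the mean-value theorem to the kernel, producing a factor $|x-x'|\,|y-x|^{-n-1}$ which, combined with the $\mathcal{C}^\alpha$ bound on $f_1$, again integrates to give $|x-x'|^\alpha$ (with a logarithmic check at the transition radius). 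Balancing these two pieces is the classical H\"older--Schauder estimate for Newtonian potentials, and it is this careful decomposition that constitutes the crux of the argument; once established for $u_1$, the smoothness of $u_2$ yields $u \in \mathcal{C}^{2+\alpha}$ on a neighbourhood of $x_0$, and since $x_0 \in \Omega$ was arbitrary the conclusion follows.
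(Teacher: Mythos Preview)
The paper does not supply a proof of this proposition at all: it is listed among ``classical results'' that are merely recalled, with the reader referred to Folland \cite{Folland:76} for details. Your sketch is correct and is precisely the classical argument one finds in Folland (and in Gilbarg--Trudinger): localize with a cutoff so that the far part is $C^\infty$ and the near part reduces to the H\"older regularity of the Newtonian potential of a compactly supported $\mathcal{C}^\alpha$ source, then derive the pointwise formula for $\partial_i\partial_j u_1$ by subtracting $f_1(x)$ to cancel the non-integrable singularity, and finally establish the $\mathcal{C}^\alpha$ modulus of the second derivatives via the near-ball/far-annulus decomposition. There is nothing to compare against in the paper itself, and your outline matches the cited source.
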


\begin{corollary}\label{BersJohnSchechter} 
If $f\in {\mathcal C}^{k+\alpha}(\Omega)$ for some integer $k$ and 
$\alpha\in (0,1)$ 
then $u\in {\mathcal C}^{k+2+\alpha}(\Omega)$. 
\end{corollary}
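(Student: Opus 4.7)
The corollary is a classical interior (Schauder-type) regularity statement, and the plan is to reduce it to Proposition \ref{ThmFolland2} by localisation. An equivalent induction on $k$ (differentiating the equation $-\nabla^{2}u=f$ and repeatedly invoking the base case $k=0$, which is exactly Proposition \ref{ThmFolland2}) is also possible; I describe the direct ``cutoff'' version, which makes the local nature of the statement most transparent.

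Fix an arbitrary point $x_{0}\in\Omega$, choose concentric open sets $\Omega'\subset\subset\Omega''\subset\subset\Omega$ containing $x_{0}$, and pick a cutoff $\chi\in C_{c}^{\infty}(\Omega)$ with $\chi\equiv 1$ on $\Omega''$. Set $g=\chi f$, extended by zero outside $\Omega$. Then $g\in L_{1}(\mathbb{R}^{n})\cap\mathcal{C}^{k+\alpha}(\mathbb{R}^{n})$, has compact support, trivially satisfies the integrability hypotheses of Proposition \ref{ThmFolland1}, and coincides with $f$ on $\Omega''$. Let $v=g*N$. Since $g$ has compact support and $k$ continuous derivatives, differentiation commutes with convolution against the locally integrable kernel $N$, so $\partial^{\beta}v=(\partial^{\beta}g)*N$ for every multi-index $|\beta|\le k$.

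For each $|\beta|=k$, the function $\partial^{\beta}g$ is compactly supported, lies in $L_{1}(\mathbb{R}^{n})\cap\mathcal{C}^{\alpha}(\Omega'')$, and satisfies $-\nabla^{2}w_{\beta}=\partial^{\beta}g$ for $w_{\beta}:=(\partial^{\beta}g)*N=\partial^{\beta}v$. Proposition \ref{ThmFolland2} then yields $w_{\beta}\in\mathcal{C}^{2+\alpha}(\Omega'')$, whence $v\in\mathcal{C}^{k+2+\alpha}(\Omega'')$. On $\Omega''$ we have $-\nabla^{2}v=g=f=-\nabla^{2}u$, so $h:=u-v$ is harmonic on $\Omega''$. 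By Weyl's lemma (interior regularity of harmonic functions), $h$ is $C^{\infty}$ (in fact real analytic) on $\Omega''$, and therefore $u=v+h\in\mathcal{C}^{k+2+\alpha}(\Omega')$. Since $x_{0}$ was arbitrary, $u\in\mathcal{C}^{k+2+\alpha}(\Omega)$, as claimed.

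The only delicate point is the identity $\partial^{\beta}v=(\partial^{\beta}g)*N$ used above: because $N$ is only locally integrable and its derivatives are singular at the origin, one cannot naively differentiate under the integral. The derivatives must be moved onto $g$, which is legitimate thanks to its compact support and $\mathcal{C}^{k}$-regularity; the cleanest justification is a standard approximation (mollify $N$ away from the origin, integrate by parts against $g$, and pass to the limit, using the compact support of $g$ to annihilate all boundary contributions). Once this identity is in place, the rest is direct: one application of Proposition \ref{ThmFolland2} to each top-order derivative $\partial^{\beta}g$, followed by the interior smoothness of the harmonic remainder $u-v$.
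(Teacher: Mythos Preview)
Your argument is correct. The paper does not actually prove this corollary: it is stated immediately after Proposition~\ref{ThmFolland2} as a classical fact (the label \texttt{BersJohnSchechter} points to the well-known Bers--John--Schechter treatment), with no proof given. Your localisation-plus-harmonic-remainder reduction to Proposition~\ref{ThmFolland2} is exactly the standard route one would take to justify it, and all the steps --- the cutoff, the transfer of $\partial^{\beta}$ from $N$ to the compactly supported $g$, the application of Proposition~\ref{ThmFolland2} to each top-order $\partial^{\beta}g$, and Weyl's lemma for $u-v$ --- are sound. The only comment is that the paper, by presenting the statement as a corollary without proof, implicitly has in mind the shorter inductive variant you mention at the outset (differentiate $-\nabla^{2}u=f$ and feed $\partial^{\beta}f$ back into Proposition~\ref{ThmFolland2}); your cutoff version is equivalent but makes the purely local character of the conclusion, and the role of the harmonic correction, more explicit.
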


To express in, $L_2$-based,  
Sobolev spaces (see Adams\cite{Adams:75} for details) we have 
\begin{proposition}\label{ThmFolland_Sobolev3} If $f\in H^k(\Omega)$ then 
$u\in H^1_0(\Omega)\cap H^{2+k}(\Omega)$. 
\end{proposition}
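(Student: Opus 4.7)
The plan is to proceed by first establishing a base case at regularity $k=0$ and then bootstrapping by induction on $k$. For the base case, I would begin with the standard variational formulation: since $f\in H^0(\Omega)=L_2(\Omega)$ and $\Omega$ is bounded, the Lax--Milgram lemma applied to the bilinear form $a(u,v)=\int_\Omega \nabla u\cdot\nabla v\,dx$ on $H^1_0(\Omega)$ (which is coercive thanks to Poincar\'e's inequality) produces a unique weak solution $u\in H^1_0(\Omega)$ with $\|u\|_{H^1}\lesssim\|f\|_{L_2}$.

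Next I would upgrade this to $u\in H^2(\Omega)$. The interior regularity is obtained by the Nirenberg difference-quotient technique: for a cut-off $\eta\in C_c^\infty(\Omega)$ and tangential difference quotients $D_h^k$ in each coordinate direction, testing the weak formulation against $\eta^2 D_h^{-k}D_h^k u$ gives a uniform $L_2$ bound on $D_h^k\nabla u$, hence $\nabla u\in H^1_{\mathrm{loc}}$. Near the boundary one flattens $\partial\Omega$ locally via the smoothness assumption and uses the same difference quotient method in the tangential directions, yielding $L_2$ bounds on all tangential second derivatives $\partial_{\mathrm{tan}}^2 u$; the missing purely normal second derivative $\partial_\nu^2 u$ is then recovered algebraically from the equation itself, since $\partial_\nu^2 u = -f - \sum_{\mathrm{other}} \partial^2 u$ is a combination of $f$ and controlled tangential derivatives. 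Covering $\overline{\Omega}$ by finitely many such charts plus an interior chart gives the full estimate $\|u\|_{H^2}\lesssim \|f\|_{L_2}$.

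For the inductive step, assume the conclusion holds for some $k-1\ge 0$ and suppose $f\in H^k(\Omega)$. Formally differentiating the equation, any tangential derivative $w=\partial_\tau u$ satisfies $-\nabla^2 w=\partial_\tau f\in H^{k-1}(\Omega)$ in a local chart, with $w$ vanishing on the flattened boundary piece. The inductive hypothesis, applied in each boundary chart (after the same flattening) and in interior charts (where all directions are admissible), yields $w\in H^{k+1}$ locally, hence tangential derivatives of $u$ up to order $k+2$ are in $L_2$. The remaining purely normal derivatives are again recovered from the equation: differentiating $\partial_\nu^2 u=-f-\sum \partial^2_{\mathrm{tan}} u$ shows that each additional normal derivative costs one derivative on $f$ (controlled since $f\in H^k$) plus two tangential derivatives on $u$ (controlled inductively). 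Collecting the local estimates via a finite partition of unity yields $u\in H^{2+k}(\Omega)$, and the boundary condition $u\in H^1_0(\Omega)$ persists from the base case.

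The main obstacle is the boundary analysis in the inductive step: rigorously justifying the difference-quotient bookkeeping after a $C^\infty$ change of variables introduces variable coefficients and lower-order terms, so one must verify that these perturbations are benign and that the recovered normal derivatives remain in the correct Sobolev class. Once the localization and the algebraic recovery of $\partial_\nu^{j}$-derivatives from tangential ones and $f$ are set up carefully, the induction is routine and the global estimate $\|u\|_{H^{2+k}}\lesssim \|f\|_{H^k}$ follows.
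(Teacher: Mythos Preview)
The paper does not actually prove this proposition: it is listed among the classical regularity facts recalled in the introduction and attributed to Folland~\cite{Folland:76}, with the standing hypothesis that $\Omega$ has smooth boundary. Your proposal---Lax--Milgram for existence in $H^1_0$, Nirenberg difference quotients for interior and tangential boundary $H^2$ regularity, algebraic recovery of the normal--normal second derivative from the equation, and then bootstrapping by induction on $k$---is precisely the standard textbook argument (as in Evans or Gilbarg--Trudinger) that underlies the cited result, and it is correct in outline. There is nothing to compare, since the paper offers no alternative proof; your sketch simply supplies what the paper takes for granted.
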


 For a general bounded convex domain $\Omega$, by Dirichlet principle, given 
$f\in H^{-1}(\Omega)$, there exists a unique solution, 
$u\in H^1_0(\Omega)$, satisfying 
\eqref{Problem}, and the mapping $f\longmapsto u$ is a Hilbert space 
isomorphism from $H^{-1}(\Omega)$ onto $H^1_0(\Omega)$. 
This is crucial in our study where, 
in order to derive optimal convergence with minimum smoothness 
requirement on the exact solution, 
we shall assume the data $f$ to belong to $H^{-1}$, i.e. 
 the dual of $H^1_0(\Omega)$. Then 
for 
$f\in H^{\sigma}(\Omega)$, we have $u\in H^{\sigma+2}(\Omega)$
where $-1\le \sigma <1.$ To justify the 
regularity preserving property we refer the reader to studies based on Green's 
function approaches,  e.g. in 
\cite{Fromm:93} and \cite{Mazya.Rossmann:2010}. 

The purpose of this study is 
to generalize the two dimensional 
results in \cite{Suli:91} from the rectangular domain 
$\Omega=(0,1)\times (0,1)$ to 
the cubic domain $\Omega=(0,1)\times (0,1) \times (0,1)$. 
The study of the finite volume scheme in three dimensions is somewhat 
different from a straightforward generalization of the two dimensional case and 
there are adjustments that need to be made for the dimension. 
We also provide a corrected (cf. \cite{Drazic:86}) proof of
Theorem 4.2 (in \cite{Suli:91}) utilized for the convergence of the 
finite volume method. 

For Problem \eqref{Problem}, existence, uniqueness, and regularity 
studies are extensions of two-dimensional results in \cite{Grisvard:85}:  
$f\in H^{-1}(\Omega)$ implies that: there exists a unique solution 
\mbox{
$ u\in H_0^{1}(\Omega)$,}
and for 
$f\in H^{s}(\Omega)$, with   $-1\le s<1,\,\, s\neq \pm 1/2,$  
$\,\, u\in H^{s+2}(\Omega)$. 
The finite volume scheme can be described as:  
exploiting divergence from the differential equation  \eqref{Problem}
integrating over disjoint {}''volumes'' 
and using Gauss' divergence theorem to convert volume-integrals to 
surface-integrals, and then discretizing to obtain the 
approximate solution 
$u_h$, with $h$ denoting the mesh size. 
Here, the finite volume method is defined on the Cartesian product 
of non-uniform meshes as a Petrov-Galerkin method using 
 piecewise trilinear trial functions on a {\sl finite element} mesh and 
piecewise constant test functions on the dual box mesh. The main result 
of this paper: Theorem \ref{FVTH1}, together with the optimal 
finite element estimate in Theorem \ref{FETH2}, justifies the 
sharpness of our estimate in $L_2$. The $L_\infty$ estimate in 
three dimensions is suboptimal. 

\begin{theorem}\label{FVTH1}
 The finite volume error estimates for 
general non-uniform and quasi-uniform meshes in 
$\Omega\subset {\mathbb R}^d,\, d=2,3$, 
are given by 
 \begin{equation}
 \|{u-u_h}\|_{1,h}\le C
h^s|u|_{H^{s+1}},\qquad 
 \|{u-u_h}\|_{\infty}\le C
h^{s+1-d/2}|\log h||u|_{H^{s+1}}, \quad 1/2<s\le 2.
\end{equation}
\end{theorem}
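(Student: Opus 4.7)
The plan is to extend the Petrov--Galerkin analysis of \cite{Suli:91} to three dimensions while handling the additional geometric bookkeeping imposed by the cubic dual control volumes. I would first recast the scheme as: find $u_h$ in the trial space $V_h$ of continuous piecewise trilinear functions vanishing on $\partial\Omega$ such that $a_h(u_h,w_h)=(f,w_h)$ for every $w_h$ in the test space $W_h$ of functions that are constant on each dual box. Alongside, let $u_h^{FE}\in V_h$ be the standard Galerkin finite element solution associated with $a(v,w)=\int_\Omega\nabla v\cdot\nabla w\,dx$, so that the triangle inequality splits the error as
\begin{equation*}
\|u-u_h\|_{1,h}\le \|u-u_h^{FE}\|_{1,h}+\|u_h^{FE}-u_h\|_{1,h}.
\end{equation*}
The first term is bounded by $Ch^s|u|_{H^{s+1}}$ directly from the optimal finite element estimate in Theorem \ref{FETH2}.

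For the finite-volume contribution I would invoke the discrete coercivity of $a_h$ on $V_h$ with the test $w_h=\mathcal J_h(u_h^{FE}-u_h)$, where $\mathcal J_h:V_h\to W_h$ is the natural nodal identification. This reduces the problem to estimating the consistency discrepancy
\begin{equation*}
E_h(v_h,w_h):=a_h(v_h,w_h)-a(v_h,\mathcal J_h^{-1}w_h),\qquad v_h\in V_h,\ w_h\in W_h.
\end{equation*}
Expanding $a_h(v_h,w_h)$ via Gauss' divergence theorem on each dual box rewrites it as a sum of flux integrals over the cell faces; these are then paired, face by face, with the interior gradient integrals generated by $a(\cdot,\cdot)$ on the primal mesh, and the remaining defect is bounded by a Bramble--Hilbert argument on the reference cube, exploiting the assumed regularity $u\in H^{s+1}$ with $1/2<s\le 2$. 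Combining the resulting $O(h^s)$ bound with the FEM estimate yields the first claim of the theorem.

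For the maximum-norm bound I would introduce a regularised discrete Green's function $G_h^z\in V_h$ centred at a nodal point $z$ and represent $(u-u_h)(z)$ through $a_h(u-u_h,\mathcal J_h G_h^z)$ modulo an interpolation remainder. The estimate then factors into a product of the discrete $H^1$-type error controlled above and a weighted seminorm of $G_h^z$; the classical weighted bounds for $G_h^z$ on a quasi-uniform mesh in $\mathbb R^d$ contribute the factor $h^{1-d/2}|\log h|$, which combines with the $H^1$ rate to produce the stated suboptimal rate $h^{s+1-d/2}|\log h|$, optimal for $d=2$ and losing a half power for $d=3$.

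The main obstacle is the three-dimensional consistency estimate for $E_h$. In two dimensions the dual boxes share only edges and vertices, and the face-flux matching reduces to a short algebraic identity; in three dimensions the boxes share faces, edges and vertices simultaneously, and the piecewise trilinear trial functions introduce genuine mixed product terms that do not split along coordinate directions. This is exactly the place where the gap in the proof of Theorem 4.2 of \cite{Suli:91} must be repaired along the lines of \cite{Drazic:86}; closing this identity uniformly at the stated regularity range $1/2<s\le 2$, and doing so without losing powers of $h$ at the logarithmic endpoint, is the critical technical step on which the whole argument hinges.
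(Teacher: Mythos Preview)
Your route is genuinely different from the paper's. The paper does not introduce an auxiliary finite element solution at all: it rewrites the finite volume scheme as a finite difference scheme via the averaging operators $\mu_{yz},\mu_{xz},\mu_{xy}$, proves the discrete coercivity $\|v\|_{1,h}\le\tfrac{32}{15}\|L^hv\|_{-1,h}$ directly (Lemmas~\ref{lem31}--\ref{lem32} and Theorem~\ref{thm31}), writes the error equation $L^h(u-u_h)=\Delta_x^+\eta_1+\Delta_y^+\eta_2+\Delta_z^+\eta_3$ with explicit truncation quantities $\eta_i$, and bounds each $\eta_i$ cell by cell after mapping to a reference cube and applying the abstract Tartar lemma (Theorem~\ref{thm42}) as a Bramble--Hilbert device. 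For the maximum norm the paper uses no discrete Green's function: it simply combines the discrete $H^1$ estimate with Sobolev embedding and an inverse inequality, which immediately accounts for the loss of $h^{d/2-1}$ in three dimensions. Your FEM-comparison strategy is closer to the analyses of Ewing--Lin--Lin or Chatzipantelidis--Lazarov--Thom\'ee and is viable in principle; what it buys is a cleaner separation between approximation and consistency, at the cost of having to control both the bilinear-form discrepancy $E_h$ and the right-hand-side discrepancy $(f,w_h)-(f,\mathcal J_h^{-1}w_h)$, neither of which you mention bounding.

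Two concrete problems, however. First, you appeal to Theorem~\ref{FETH2} for $\|u-u_h^{FE}\|_{1,h}\le Ch^s|u|_{H^{s+1}}$, but that theorem is a two-dimensional $W^{1,\infty}$ estimate from \cite{Lin.Thomee.Wahlbin:91} and does not deliver the $H^1$ bound you need in either dimension; you must invoke standard $H^1$ finite element theory separately. Second, you place the ``gap'' of Theorem~4.2 in \cite{Suli:91} inside the three-dimensional face-flux matching for $E_h$. That is a misreading: the gap the paper repairs is in the proof of the abstract functional-analytic lemma of Tartar (the present Theorem~\ref{thm42}/Theorem~\ref{thmTartar}), specifically in the argument that $\mathrm{Ker}(A_1)$ is finite dimensional, and has nothing to do with the geometry of the dual boxes. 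The three-dimensional consistency computation is long but contains no obstruction beyond bookkeeping; the repaired Tartar lemma is what makes the Bramble--Hilbert step go through at the fractional regularity $1/2<s\le 2$.
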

whereas the corresponding finite element estimates can be read as:  
\begin{theorem}\label{FETH2} (cf \cite{Lin.Thomee.Wahlbin:91}) 
\\
a) For the finite element solution of the Poisson problem \eqref{Problem}, 
in two dimensions, with a 
quasiuniform triangulation we have the error estimate: 
$$
\|{u-u_h}\|_{1,\infty}\le C
h^{r}|\log h|\times \|u\|_{r+1,\infty},\qquad r\le 2
$$
b) 
$\forall \varepsilon\in (0,1)\,\,$  small, $\,\, \exists \, \,C_{\varepsilon}$ 
such that 
$
\|{u-u_h}\|_{1,\infty}\ge C_{\varepsilon}h^{r-\varepsilon}|\log h|. 
$
\end{theorem}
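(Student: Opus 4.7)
The plan is to prove part (a) by the weighted-Sobolev / regularized-Green's-function technique pioneered by Natterer, Nitsche, Rannacher and Scott and refined in \cite{Lin.Thomee.Wahlbin:91}, and to prove part (b) by a concrete construction showing the logarithmic factor cannot in general be avoided.

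For the upper bound in part (a), fix an arbitrary interior point $x_0\in\Omega$ and a unit direction $e$, and seek to estimate $\partial_e(u-u_h)(x_0)$. The first move is to replace the Dirac evaluation functional by a smooth, mesh-dependent regularization $\tilde\delta_{x_0}^{\,h}$, supported in a ball of radius $O(h)$ around $x_0$, with total mass one and reproducing polynomials of the degree used in $V_h$; a standard regularization lemma then yields
\[
\partial_e(u-u_h)(x_0)=\int_\Omega \partial_e(u-u_h)\,\tilde\delta_{x_0}^{\,h}\,\ud x \,+\, R_h,\qquad |R_h|\le Ch^r\|u\|_{r+1,\infty}.
\]
Let $g\in H_0^1(\Omega)$ be the weak solution of $-\nabla^2 g=\partial_e\tilde\delta_{x_0}^{\,h}$, the regularized dipole Green's function, and let $g_h\in V_h$ denote its Ritz projection. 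Galerkin orthogonality gives, for any $v_h\in V_h$,
\[
\int_\Omega \partial_e(u-u_h)\,\tilde\delta_{x_0}^{\,h}\,\ud x = a(u-u_h,g)=a(u-u_h,g-v_h),
\]
and Hölder's inequality yields $|a(u-u_h,g-v_h)|\le \|\nabla(u-u_h)\|_{L^\infty}\,\|\nabla(g-v_h)\|_{L^1}$.

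The technical heart of the argument is the weighted estimate $\|\nabla(g-g_h)\|_{L^1(\Omega)}\le C|\log h|$. Introduce the weight $\sigma(x)=(|x-x_0|^2+\kappa h^2)^{1/2}$ and establish the dual pair
\[
\|\sigma\,\nabla(g-g_h)\|_{L^2(\Omega)}\le C h,\qquad \|\sigma^{-1}\|_{L^2(\Omega)}\le C|\log h|^{1/2},
\]
then combine via Cauchy--Schwarz to obtain $\|\nabla(g-g_h)\|_{L^1}\le C|\log h|$. The second bound is the source of the logarithm: in dimension two, $\int_{\Omega}(|x-x_0|^2+h^2)^{-1}\,\ud x = O(|\log h|)$. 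The first bound follows from weighted energy estimates for $g$, whose local $H^2$-mass on annuli around $x_0$ decays at the rate that cancels the blow-up of $\sigma$, together with quasi-uniformity of the triangulation. Combining with the best-approximation estimate $\inf_{v_h\in V_h}\|\nabla(u-v_h)\|_{L^\infty}\le C h^r \|u\|_{r+1,\infty}$ and using a short absorption/iteration step to pass from $\|\nabla(u-u_h)\|_{L^\infty}$ on the right-hand side to the same quantity on the left gives the desired $\|u-u_h\|_{1,\infty}\le C h^r|\log h|\,\|u\|_{r+1,\infty}$.

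For part (b), the strategy is to exhibit a one-parameter family of smooth test solutions whose local structure near a chosen point mimics the singularity of the continuous Green's function $\tfrac{1}{2\pi}\log|x-x_0|$ up to order $r+1-\varepsilon$, and then to compute $\partial_e(u-u_h)$ at that point using the explicit discrete Green's function on a regular portion of the quasiuniform triangulation. The logarithmic blow-up of the continuous Green's function survives discretization and produces a pointwise gradient error of order $h^{r-\varepsilon}|\log h|$, the $\varepsilon$ being the slack required to absorb perturbations from the boundary and from non-uniformity of the mesh. The main obstacle throughout is the weighted $L^2$ analysis of step (a): in two dimensions the weight $\sigma^{-1}$ is just barely square-integrable, which both produces the logarithm in the upper bound and, as part (b) shows, makes that logarithm essentially sharp.
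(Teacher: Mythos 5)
You should first note that the paper contains no proof of Theorem~\ref{FETH2} at all: it is quoted, with the citation \cite{Lin.Thomee.Wahlbin:91}, purely as a finite element benchmark against which the finite volume estimate of Theorem~\ref{FVTH1} is compared. So your proposal can only be measured against the literature, and there your overall route for part (a) --- regularized dipole Green's function, Galerkin orthogonality, and the weighted dual pair $\Vert \sigma\nabla(g-g_h)\Vert_{L^2}\le Ch$, $\Vert\sigma^{-1}\Vert_{L^2}\le C\vert\log h\vert^{1/2}$ --- is indeed the standard Rannacher--Scott-type machinery. However, as written your argument has a genuine gap at the closing step. After H\"older you have
\begin{equation*}
\vert\partial_e(u-u_h)(x_0)\vert \le \Vert\nabla(u-u_h)\Vert_{L^\infty}\,\Vert\nabla(g-v_h)\Vert_{L^1}+\vert R_h\vert,
\end{equation*}
and the best available bound for the first factor's companion is $\Vert\nabla(g-g_h)\Vert_{L^1}\le C\vert\log h\vert$, which \emph{diverges} as $h\to 0$; an ``absorption/iteration step'' can only move a term across the inequality when its coefficient is strictly less than $1$, so the argument as proposed cannot close and would in fact prove nothing. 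The standard repair is to use Galerkin orthogonality in \emph{both} slots: for arbitrary $\chi\in V_h$ one has $a(u-u_h,g)=a(u-\chi,g-g_h)$, whence H\"older gives $\Vert\nabla(u-\chi)\Vert_{L^\infty}\Vert\nabla(g-g_h)\Vert_{L^1}\le Ch^r\Vert u\Vert_{r+1,\infty}\cdot C\vert\log h\vert$ directly, with the logarithm landing harmlessly on the best-approximation term and no absorption needed. Even then, the two weighted estimates you state are the technical heart and are only asserted; they require local energy (superapproximation) estimates and a duality step that your sketch does not supply.

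Part (b) is not a proof but a restatement of the goal: the sentence ``the logarithmic blow-up of the continuous Green's function survives discretization'' is precisely the assertion that must be established, and in the literature this is done by delicate explicit computations with the discrete Green's function on a uniform patch (Haverkamp-type arguments, and the negative results in \cite{Lin.Thomee.Wahlbin:91}); no test family is actually constructed and no lower-bound computation is carried out in your sketch. Note also a quantifier issue you would have to confront: since $h^{r-\varepsilon}\vert\log h\vert \gg h^{r}\vert\log h\vert$ for small $h$, the claimed lower bound cannot hold for a fixed $u$ compatible with part (a); any honest proof must specify the $h$-dependent (or norm-unbounded) family of solutions over which the supremum is taken, and your proposal does not address this.
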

Note that, in the two dimensional case, $s=2$ in Theorem \ref{FVTH1} 
corresponds to $r=1$ in Theorem \ref{FETH2}, 
whereas the optimal $L_\infty$ estimate in 2D is not generalized to the 3D case.

\section{The finite volume method in 3D}
A version of the three--dimensional scheme construction has also 
been discussed in \cite{Asadzadeh.Bartoszek:2011}.
On our spatial domain $\Omega$ we construct an arbitrary 
(not necessarily uniform) mesh 
$\bar{\Omega}^{h}=\bar{\Omega}^{h}_{x} \times\bar{\Omega}^{h}_{y} \times\bar{\Omega}^{h}_{z}$
as a Cartesian product of three one--dimensional meshes,
\bd
\begin{array}{rcl}
\bar{\Omega}^{h}_{x} & =& \{x_{i},~ i=0,\ldots,M_{x}:~x_{0}=0,~x_{i}-x_{i-1}=h^{x}_{i},~x_{M_{x}}=1 \} \\
\bar{\Omega}^{h}_{y} & =& \{y_{j},~ j=0,\ldots,M_{y}:~x_{0}=0,~y_{j}-y_{j-1}=h^{y}_{j},~y_{M_{y}}=1 \} \\
\bar{\Omega}^{h}_{z} & =& \{z_{k},~ k=0,\ldots,M_{z}:~x_{0}=0,~z_{k}-z_{k-1}=h^{z}_{k},~z_{M_{z}}=1 \}.
\end{array}
\ed
We further define 
$\Omega^{h}_{x}:=\bar{\Omega}^{h}_{x} \cap (0,1]$, $\Omega^{h}_{y}:=\bar{\Omega}^{h}_{y} \cap (0,1]$,
$\Omega^{h}_{z}:=\bar{\Omega}^{h}_{z} \cap (0,1]$,
$\partial\Omega^{h}_{x}:=\{0,1\}\times\Omega^{h}_{y} \times\Omega^{h}_{z} $,
$\partial\Omega^{h}_{y}:= \Omega^{h}_{x} \times \{0,1\}\times\Omega^{h}_{z} $,
$\partial\Omega^{h}_{z}:= \Omega^{h}_{x} \times\Omega^{h}_{y}\times \{0,1\} $,
$\Omega^{h}:=\Omega \cap \bar{\Omega}^{h}$ and $\partial\Omega^{h}:=\partial\Omega \cap \bar{\Omega}^{h}$.
With each mesh point $(x_{i},y_{j},z_{k}) \in \Omega^{h}$ 
we associate the finite volume element 
$$ \omega_{ijk} := (x_{i-1/2},x_{i+1/2}) \times(y_{j-1/2},y_{j+1/2}) 
\times(z_{k-1/2},z_{k+1/2}),$$
where
\bd
\begin{array}{rclrcl}
x_{i-1/2} & :=& x_{i}-\frac{h^{x}_{i}}{2}, \quad & x_{i+1/2} & :=& x_{i}+\frac{h^{x}_{i+1}}{2}, \\
y_{j-1/2} & :=& y_{j}-\frac{h^{y}_{j}}{2}, & y_{j+1/2} & :=& y_{j}+\frac{h^{y}_{j+1}}{2}, \\
z_{k-1/2} & :=& z_{k}-\frac{h^{z}_{k}}{2}, & z_{k+1/2} & :=& z_{k}+\frac{h^{z}_{k+1}}{2}, 
\end{array}
\ed
and denote the dimensions of the volume element $\omega_{ijk}$ by,
$$ \newhbar^{x}_{i}:= \frac{h^{x}_{i}+h^{x}_{i+1}}{2}, \quad \newhbar^{y}_{j}:= \frac{h^{y}_{j}+h^{y}_{j+1}}{2},\quad \newhbar^{z}_{k}:= \frac{h^{z}_{k}+h^{z}_{k+1}}{2}, $$
see Fig. \ref{figomegaijkbox}.
\begin{figure}[ht!]
\begin{center}
\includegraphics[width=0.5\textwidth]{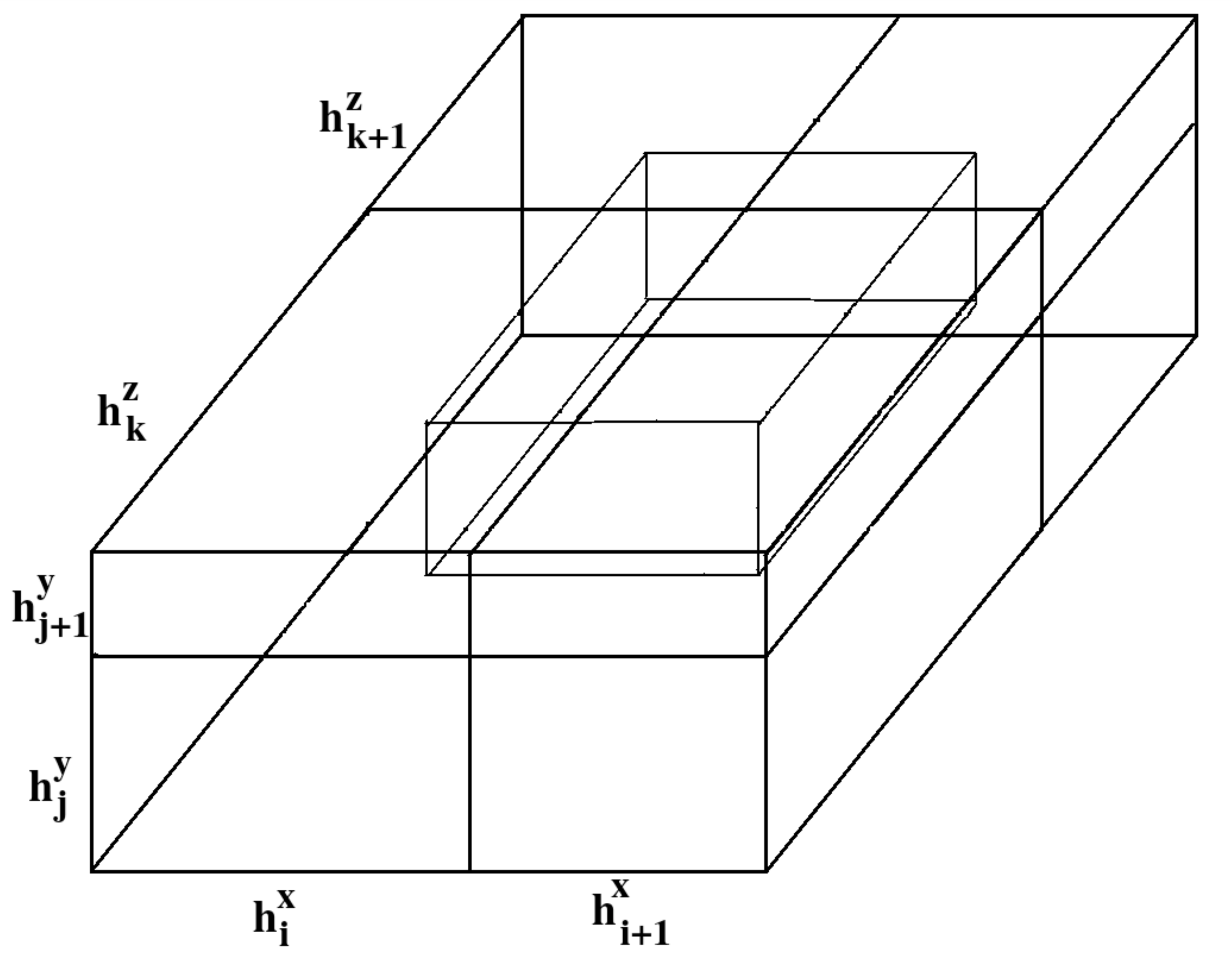}
\caption{
Part of mesh showing the grid and finite volume $\omega_{ijk}$ (inside box) in three dimensions.
}\label{figomegaijkbox} 
\end{center}
\end{figure}

The characteristic function of the box $\omega_{ijk}$, i.e. $\chi_{ijk}$  
belongs to $H^{\tau}(\mathbb{R}^{3})$ for all 
\mbox{
$\tau <1/2$.} 
This can be easily verified by the fact that the Fourier 
transform of the characteristic function of the unit interval 
$\chi_{(0,1)}$ is the {\sl sinc function}: $\sin \xi/\xi$. Thus using the 
Fourier transform we may determine the Sobolev class of $\chi_{ijk}$.  
To this end, for each $s\in {\mathbb R}^+$  we recall the 
operator $\Lambda^s$ defined as  
$(\Lambda^s \xi)\,\hat{}\,( \chi)=(1+\abs{\xi}^2)^{s/2}\hat \chi(\xi)$ 
and the Sobolev norm of order $s$, 
\begin{equation}
\begin{split}
\norm{\chi_{ijk}}_s^2& = 
\norm{\Lambda^s \chi_{ijk}}_{L_2({\mathbb R}^3)}^2\\
&=\int_{{\mathbb R}^3}(1+\abs{\xi_1}^2+\abs{\xi_2}^2 + \abs{\xi_3}^2)^s 
\Big(\frac{\sin\xi_1}{\xi_1}\Big)^2\cdot 
\Big(\frac{\sin\xi_2}{\xi_2}\Big)^2\cdot 
\Big(\frac{\sin\xi_3}{\xi_3}\Big)^2\, d\xi.
\end{split}
\end{equation}
We split the above integral as 
$$
\int_{{\mathbb R}^3}\bullet\,\,d\xi =\int_{\abs{\xi}\le 1}\bullet\,\,d\xi +\int_{\abs{\xi}> 1}\bullet\,\,d\xi, 
$$
and check for which $s$-values the integrals on the right hand side 
 converge. For the first integral, since 
$\lim_{\xi_i\to 0} {\sin\xi_i}/{\xi_1}=1, \,\, i=1, 2, 3,$  
we get an immediate bound. As for the second integral 
we have that, 
\begin{equation}
\begin{split}
\int_{\abs{\xi}> 1}  (1+&\abs{\xi_1}^2+\abs{\xi_2}^2 + \abs{\xi_3}^2)^s 
\Big(\frac{\sin\xi_1}{\xi_1}\Big)^2\cdot 
\Big(\frac{\sin\xi_2}{\xi_2}\Big)^2\cdot 
\Big(\frac{\sin\xi_3}{\xi_3}\Big)^2\,\,d\xi \\
&\le \int_{\abs{\xi}> 1} (1+\abs{\xi_1}^2)^s(1+\abs{\xi_2}^2)^s(1+\abs{\xi_3}^2)^s
\frac 1{\abs{\xi_1^2}}\cdot\frac 1{\abs{\xi_2^2}}\cdot \frac 1{\abs{\xi_3^2}}\,\,d\xi
\\
&\le \prod_{j=1}^3\int_{\abs{\xi_j}> r_j}  (1+\abs{\xi_j}^2)^s\frac 1{\abs{\xi_j^2}}\,\,d\xi
=\prod_{j=1}^3\int_{r_j}^\infty(1+r^2)^s\frac 1{{r^2}}\,  dr, 
\end{split}
\end{equation}
which converges for $2s-2<-1$, i.e. $s<1/2$. 
Since $\chi\in H^{\tau}(\Omega),\,\, \tau<1/2$, we may 
 assume that $f\in H^{\sigma}(\Omega)$ for $\sigma>-1/2$.  
Then the convolution $\chi_{ijk}\ast f$
will be continuous on $\mathbb{R}^{3}$ and if we have $f \in L_{loc}^{1}(\Omega)$, 
then 
\be
\frac{-2}{\vert \omega_{ijk} \vert} 
\int\limits_{\partial \omega_{ijk}} \frac{\partial u}{\partial \mathbbm {n}} \ud s
= 
\frac{1}{\vert \omega_{ijk} \vert} 
\left(\chi_{ijk} \ast f \right) \left(x_{i},y_{j},z_{k} \right), 
\ee
where $\vert \omega_{ijk}\vert = \newhbar^{x}_{i}\newhbar^{y}_{j}\newhbar^{z}_{k}.$
Let now $S^{h}_{0}$ be the set of piecewise continuous 
trilinear functions 
defined on the cubic rectangular partition of $\bar{\Omega}$ induced by $
\bar{\Omega}^{h}$ and 
vanishing on $\partial \Omega$.
We can now construct the finite volume approximation 
$u^{h} \in S_{0}^{h}$ of $u$ as satisfying,
\be
\label{eqfvm}
\frac{-2}{\newhbar^{x}_{i}\newhbar^{y}_{j}\newhbar^{z}_{k}} 
\int\limits_{\partial \omega_{ijk}} 
\frac{\partial u^{h}}{\partial \mathbbm{n}} \ud s
=
\frac{1}{\newhbar^{x}_{i}\newhbar^{y}_{j}\newhbar^{z}_{k}} 
\left(\chi_{ijk} \ast f \right)
(x_{i},y_{j},z_{k})\qquad \mathrm{for}\quad (x_{i},y_{j},z_{k})\in\Omega^{h}.
\ee
Here the factor $2$ appears due to the jump of $\chi_{ijk}$ across the 
inter-element 
boundaries on $\partial \omega_{ijk}$, and will  
not matter for any of the stability results and convergence rates 
as considered by \cite{Asadzadeh.Bartoszek:2011} and \cite{Suli:91}  
but only in numerical implementations of the scheme.

\section{Properties of the scheme and stability estimates}
To investigate the behavior of this scheme we will rewrite it as a 
finite difference scheme. To this end, we define
the averaging operators 
(all are presented, since 
due to miss-matches in indexing discrepancies these operators are not presentable 
 in a single generic form) 
\be
\label{eqsAvOp}
\begin{array}{ll}
\mu_{xy}u_{ijk} &:=  \frac{1}{16\newhbar^{x}_{i}\newhbar^{y}_{j}} \left(h^{x}_{i}h^{y}_{j}u_{i-1,j-1,k}+h^{x}_{i+1}h^{y}_{j}u_{i+1,j-1,k}+12\newhbar^{x}_{i}\newhbar^{y}_{j}u_{ijk}
\right.\\ &\qquad \qquad \left.
+h^{x}_{i}h^{y}_{j+1}u_{i-1,j+1,k}+h^{x}_{i+1}h^{y}_{j+1}u_{i+1,j+1,k} \right), \\ 
\mu_{xz}u_{ijk} & :=  \frac{1}{16\newhbar^{x}_{i}\newhbar^{z}_{k}} \left(h^{x}_{i}h^{z}_{k}u_{i-1,j,k-1}+h^{x}_{i+1}h^{z}_{k}u_{i+1,j,k-1}+12\newhbar^{x}_{i}\newhbar^{z}_{k}u_{ijk}
\right.\\  & \qquad \qquad \left.
+h^{x}_{i}h^{z}_{k+1}u_{i-1,j,k+1}+h^{x}_{i+1}h^{z}_{k+1}u_{i+1,j,k+1} \right), \\ 
\mu_{yz}u_{ijk} & :=  \frac{1}{16\newhbar^{y}_{j}\newhbar^{z}_{k}} \left(h^{y}_{j}h^{z}_{k}u_{i,j-1,k-1}+h^{y}_{j+1}h^{z}_{k}u_{i,j+1,k-1}+12\newhbar^{y}_{j}\newhbar^{z}_{k}u_{ijk}
\right.\\ & \qquad \qquad \left.
+h^{y}_{j}h^{z}_{k+1}u_{i,j-1,k+1}+h^{y}_{j+1}h^{z}_{k+1}u_{i,j+1,k+1} \right), 
\end{array}
\ee
and the divided differences,
\bd
\begin{array}{rclrcl}
\Delta^{-}_{x}u_{i,j,k} & =& \frac{u_{i,j,k}-u_{i-1,j,k}}{h^{x}_{i}}, & 
\qquad \Delta^{+}_{x}u_{i,j,k} & =& \frac{u_{i+1,j,k}-u_{i,j,k}}{\newhbar^{x}_{i}},\\
\Delta^{-}_{y}u_{i,j,k} & =& \frac{u_{i,j,k}-u_{i,j-1,k}}{h^{y}_{j}}, & 
\qquad \Delta^{+}_{y}u_{i,j,k} & =& \frac{u_{i,j+1,k}-u_{i,j,k}}{\newhbar^{y}_{j}},\\
\Delta^{-}_{z}u_{i,j,k} & =& \frac{u_{i,j,k}-u_{i,j,k-1}}{h^{z}_{k}}, & 
\qquad \Delta^{+}_{z}u_{i,j,k} & =& \frac{u_{i,j,k+1}-u_{i,j,k}}{\newhbar^{z}_{k}}.
\end{array}
\ed 
Then, we can write
\bd
\newhbar^{x}_{i}\newhbar^{y}_{j}\newhbar^{z}_{k}\left(\Delta^{+}_{x}\Delta^{-}_{x}\mu_{yz} 
+\Delta^{+}_{y}\Delta^{-}_{y}\mu_{xz} + \Delta^{+}_{z}\Delta^{-}_{z}\mu_{xy}
 \right)u_{i,j,k} = \int\limits_{\partial \omega_{ijk}}
\frac{\partial u}{\partial \mathbbm{n}}\ud s.
\ed
This allows us to restate the finite volume scheme, \eqref{eqfvm} 
as the following finite difference scheme,
\be\label{eqfds}
\begin{array}{rcll}
-2\left(\Delta^{+}_{x}\Delta^{-}_{x}\mu_{yz} +\Delta^{+}_{y}\Delta^{-}_{y}\mu_{xz} + \Delta^{+}_{z}\Delta^{-}_{z}\mu_{xy} \right)u^{h} & =& T_{111}f 
\quad & ~\mathrm{in}\,\,\Omega^{h}, \\
u^{h} & =& 0 \quad &~\mathrm{on}\,\,\partial\Omega^{h},
\end{array}
\ee
where
\bd
\left(T_{111}f \right)_{ijk} = \frac{1}{\newhbar^{x}_{i}\newhbar^{y}_{j}\newhbar^{z}_{k}}\left(\chi_{ijk} \ast f\right)(x_{i},y_{j},z_{k}).
\ed
To extend \eqref{eqfds} to higher than three dimensions, 
the same scheme will apply, however the definition of $\mu$ will change.
If we look at carefully how this averaging operator works, it appears that 
the main 
difference will be what will correspond to the factor 
$12$ appearing as the coefficient of the central term in 
 \eqref{eqsAvOp}. In fact
if we denote by $d$ the dimension then, 
\be\label{FinDiffCentral} 
\begin{array}{ll}
&\mu_{x_{1}x_{2}\ldots x_{d-1}} u_{i_{1}\ldots i_{d}} =
\frac{1}{2^{d+1}}\frac{1}{\newhbar^{x_{1}}_{i_{1}}\ldots \newhbar^{x_{d-1}}_{i_{d-1}}}\left(
3\cdot2^{d-1}\cdot\newhbar^{x_{1}}_{i_{1}}\ldots \newhbar^{x_{d-1}}_{i_{d-1}}u_{i_{1}\ldots i_{d}}
\right. \\ & \qquad \left.  +
h^{x_{1}}_{i_{1}}\ldots h^{x_{d-1}}_{i_{d-1}} u_{i_{1}-1,\ldots, i_{d-1}-1,i_{d}} + \ldots
+  h^{x_{1}}_{i_{1}+1}\ldots h^{x_{d-1}}_{i_{d-1}+1} u_{i_{1}+1,\ldots, i_{d-1}+1,i_{d}}
\right).
\end{array} 
\ee
We will study the behavior of the scheme defined by \eqref{eqfds} in the discrete $H^{1}$ norm $\Vert \cdot \Vert_{1,h},$
\bd
\Vert v \Vert_{1,h} = \sqrt{\Vert v \Vert^{2}+\vert v \vert_{1,h}^{2}},
\ed
where $\Vert \cdot \Vert$ is the discrete $L_{2}$-norm over $\Omega^{h}$ 
(we suppressed $h$ in the discrete $L_2$), i.e., 
\bd
\begin{array}{rclrcl}
\Vert v \Vert&=&\sqrt{(v,v)}, &\qquad  (v,w)=\sum\limits_{i=1}^{M^{x}-1}\sum\limits_{j=1}^{M^{y}-1}\sum\limits_{z=1}^{M^{z}-1}\newhbar^{x}_{i}\newhbar^{y}_{j}\newhbar^{z}_{k}v_{i,j,k}w_{i,j,k}, 
\end{array}
\ed
and $\vert \cdot \vert_{1,h}$ is the discrete $H^{1}$-seminorm given by 
\bd
\vert v \vert_{1,h} = \sqrt{\Vert \Delta^{-}_{x} v \vert ]_{x}^{2}+\Vert \Delta^{-}_{y} v \vert ]_{y}^{2}+\Vert \Delta^{-}_{z} v \vert ]_{z}^{2}},
\ed
with 
\bd
\begin{array}{llll}
\Vert v \vert]_{x}^{2} & =  (v,v]_{x}, & \qquad (v,w]_{x} & =   \sum\limits_{i=1}^{M^{x}} \sum\limits_{j=1}^{M^{y}-1} \sum\limits_{k=1}^{M^{z}-1}h^{x}_{i}\newhbar^{y}_{j}\newhbar^{z}_{k}v_{i,j,k}w_{i,j,k},\\
\Vert v \vert]_{y}^{2} & =  (v,v]_{y}, & \qquad (v,w]_{y} & =   \sum\limits_{i=1}^{M^{x}-1} \sum\limits_{j=1}^{M^{y}} \sum\limits_{k=1}^{M^{z}-1}\newhbar^{x}_{i}h^{y}_{j}\newhbar^{z}_{k}v_{i,j,k}w_{i,j,k},\\
\Vert v \vert]_{z}^{2} & =  (v,v]_{z}, & \qquad (v,w]_{z} & =   \sum\limits_{i=1}^{M^{x}-1} \sum\limits_{j=1}^{M^{y}-1} \sum\limits_{k=1}^{M^{z}}\newhbar^{x}_{i}\newhbar^{y}_{j}h^{z}_{k}v_{i,j,k}w_{i,j,k}.
\end{array}
\ed
In addition we define the discrete $H^{-1}$ norm as,
\bd
\Vert v \Vert_{-1,h} = \sup_{w\in H^{1,h}_0(\bar{\Omega}^{h})}\frac{\vert (v,w) \vert}{\Vert w \Vert_{1,h}},
\ed
where the supremum is taken over all non--zero mesh functions on 
$\bar{\Omega}^{h}$ vanishing on $\partial\bar{\Omega}^{h}$.

We will now state and prove two {\sl coercivity-type} estimates 
 describing relationships between 
the above and our operators.
These are essentially the same as Lemmas 3.1 and 3.2 in 
\cite{Suli:91} with the coefficients
adjusted for the three dimensional case.  

\begin{lemma}\label{lem31}
Let $v$ be a mesh function on $\bar{\Omega}^{h}$.
 If $v=0$ on $\partial \Omega^{h}_{\alpha\beta}$, then
$(\mu_{\alpha\beta}v,v]_{\gamma} \ge \frac{5}{8} \Vert v \vert]_{\gamma}^{2}, $ 
in the following three cases: 

(i) \quad$\alpha\beta:=xy,\,\,\, \gamma:=z$, \qquad 
(ii) \quad $\alpha\beta:=xz,\,\,\, \gamma:=y$, \qquad and \,\,\,
(iii) \quad $\alpha\beta:=yz,\,\,\, \gamma:=x$.  






\end{lemma}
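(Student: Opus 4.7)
\textbf{Proof plan for Lemma~\ref{lem31}.}
By the symmetry of the statement under permutations of the coordinate labels, it suffices to treat case (iii); cases (i) and (ii) follow by renaming. My plan is to unfold all definitions, isolate the dominant ``central'' contribution to $(\mu_{yz}v, v]_x$, and then control the remaining ``corner'' contributions through a discrete summation-by-parts that crucially exploits $v\big|_{\partial\Omega^h_{yz}}=0$.

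I would begin by substituting the definition of $\mu_{yz}$ from \eqref{eqsAvOp} into the bilinear form and splitting:
\begin{equation*}
(\mu_{yz}v,v]_x \;=\; \frac{3}{4}\|v\vert]_x^2 \;+\; \frac{1}{16}\sum_{i,j,k} h^x_i\bigl[h^y_j h^z_k v_{i,j-1,k-1} + h^y_{j+1}h^z_k v_{i,j+1,k-1} + h^y_j h^z_{k+1}v_{i,j-1,k+1} + h^y_{j+1}h^z_{k+1}v_{i,j+1,k+1}\bigr]v_{ijk},
\end{equation*}
the $\tfrac{3}{4}\|v\vert]_x^2$ being immediate from the central weight $12\bar{h}^y_j\bar{h}^z_k$ and the identity $(h^y_j+h^y_{j+1})(h^z_k+h^z_{k+1})=4\bar{h}^y_j\bar{h}^z_k$. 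Next I would reindex the four corner sums via the shifts $(j,k)\mapsto(j{-}1,k{-}1)$ etc., which is legitimate because the boundary terms produced at $j\in\{0,M^y\}$ or $k\in\{0,M^z\}$ vanish by hypothesis. A short bookkeeping argument shows that the four sums collapse into two pairs, yielding the ``cell'' representation
\begin{equation*}
(\mu_{yz}v,v]_x \;=\; \frac{3}{4}\|v\vert]_x^2 \;+\; \frac{1}{8}\sum_{i,j,k} h^x_i\,h^y_{j+1}h^z_{k+1}\bigl[v_{ijk}v_{i,j+1,k+1} + v_{i,j+1,k}v_{i,j,k+1}\bigr].
\end{equation*}

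With $a,b,c,d$ denoting the four corner values of a cell in the $yz$-plane, I would then apply the elementary identity $ad+bc=\tfrac{1}{2}(a^2+b^2+c^2+d^2)-\tfrac{1}{2}\bigl[(a-d)^2+(b-c)^2\bigr]$ cell by cell. The ``mass'' part sums exactly: each interior $v_{ijk}^2$ appears in four surrounding cells with weights $h^y_j h^z_k,\, h^y_{j+1}h^z_k,\, h^y_j h^z_{k+1},\, h^y_{j+1}h^z_{k+1}$, whose sum is $4\bar{h}^y_j\bar{h}^z_k$, and the boundary-value terms drop. This converts the expression into
\begin{equation*}
(\mu_{yz}v,v]_x \;=\; \|v\vert]_x^2 \;-\; \frac{1}{16}\sum_{i,j,k} h^x_i\,h^y_{j+1}h^z_{k+1}\bigl[(v_{ijk}-v_{i,j+1,k+1})^2 + (v_{i,j+1,k}-v_{i,j,k+1})^2\bigr],
\end{equation*}
so the inequality reduces to showing that the non-negative ``diagonal seminorm'' on the right is bounded above by $\tfrac{3}{8}\|v\vert]_x^2$. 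The cases (i) and (ii) are handled identically after a relabeling.

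The principal obstacle lies in this last bound. A crude estimate $(a-d)^2\le 2(a^2+d^2)$, $(b-c)^2\le 2(b^2+c^2)$ only delivers the coefficient $\tfrac12\|v\vert]_x^2$, which is too weak by $\tfrac18$. The sharpening to $\tfrac38\|v\vert]_x^2$ must exploit the boundary condition $v|_{\partial\Omega^h_{yz}}=0$ in a genuinely global way: I would telescope each diagonal difference along a path in the $yz$-plane terminating on $\partial\Omega^h_{yz}$ (where $v$ vanishes), converting $(v_{ijk}-v_{i,j+1,k+1})^2$ into a weighted $\ell^2$-sum of the one-sided differences $\Delta^-_y v$ and $\Delta^-_z v$, and then re-summing by parts against a carefully chosen test weight. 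This is essentially the discrete Poincar\'e-type argument whose two-dimensional analogue is the lemma in~\cite{Suli:91} that the paper's introduction flags as requiring a corrected proof; adapting that correction to the present cellular structure is where the 3D-specific work happens.
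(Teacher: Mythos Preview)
Your algebraic reduction is correct and in fact cleaner than the paper's: the identity
\[
(\mu_{yz}v,v]_x \;=\; \|v\vert]_x^2 \;-\; \tfrac{1}{16}\sum_{i}\sum_{j=0}^{M^y-1}\sum_{k=0}^{M^z-1} h^x_i\,h^y_{j+1}h^z_{k+1}\Bigl[(v_{ijk}-v_{i,j+1,k+1})^2 + (v_{i,j+1,k}-v_{i,j,k+1})^2\Bigr]
\]
holds exactly, and the crude bound $(a-d)^2+(b-c)^2\le 2(a^2+b^2+c^2+d^2)$ already yields $(\mu_{yz}v,v]_x\ge\tfrac12\|v\vert]_x^2$. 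The paper's route is different only cosmetically: it pairs each corner product with a piece of the central mass and uses $(a+b)^2\ge 0$ after several index shifts; no Poincar\'e--type argument appears.

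The genuine gap is your final step. The sharpening $D\le\tfrac38\|v\vert]_x^2$ that you propose to obtain by telescoping cannot be true: on a uniform mesh with $M^y=M^z=M$ the Dirichlet eigenfunctions $v_{ijk}=\sin(\pi j/M)\sin(\pi(M{-}1)k/M)$ are eigenvectors of $\mu_{yz}$ with eigenvalue $\tfrac14\bigl(3+\cos(\pi/M)\cos(\pi(M{-}1)/M)\bigr)$, which equals $\tfrac58$ at $M=4$ and is strictly less than $\tfrac58$ for every $M\ge 5$, tending to $\tfrac12$ as $M\to\infty$. Thus the stated constant $\tfrac58$ is false in general, and your ``crude'' constant $\tfrac12$ is the sharp one. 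The paper's proof hides the same defect: in the passage where $\mathcal A_1$ is rewritten, the central term on the left carries weight $\bar h^x_i\bar h^y_j$, whereas the four regrouped double sums on the right together contribute weight $4\bar h^x_i\bar h^y_j$ to $v_{ijk}^2$, so that ``identity'' is off by $3\bar h^x_i\bar h^y_jv_{ijk}^2$ and the chain producing $\tfrac{10}{16}$ collapses. With the honest constant $\tfrac12$ (which you have already proved), Theorems~\ref{thm31}--\ref{thm41} survive with $\tfrac{32}{15}$ replaced by $\tfrac{8}{3}$; no Poincar\'e argument is needed or possible here.
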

\begin{proof}
We give a proof for $i)$ here, as both $ii)$ and  $iii)$ 
will be obtained by the same way.
Note, in particular,  that $v=0$ on $\partial \Omega^{h}_{xy}$, and we shall 
also use 
$a^{2}/2+2ab+b^{2}/2 \ge -a^{2}/2-b^{2}/2$. 
To proceed let 
\bd
\begin{array}{ll}
{\mathcal A}_1:=\sum\limits_{i=1}^{M^{x}-1} &
\sum\limits_{j=1}^{M^{y}-1}
 \left(
\newhbar^{x}_{i}\newhbar^{y}_{j}v_{ijk}^{2} +
h^{x}_{i}h^{y}_{j}v_{i-1,j-1,k}v_{ijk} +
h^{x}_{i+1}h^{y}_{j}v_{i+1,j-1,k}v_{ijk} \right.\\ 
&\qquad  \left.
+h^{x}_{i}h^{y}_{j+1}v_{i-1,j+1,k}v_{ijk} 
+
h^{x}_{i+1}h^{y}_{j+j}v_{i+1,j+1,k}v_{ijk} \right).
\end{array}
\ed
Then, we use the shift law, vanishing boundary conditions,  
and split the terms in ${\mathcal A}_1$ at the end-point 
indices to obtain 
\bd
\begin{array}{ll} 
{\mathcal A}_1=&
\sum\limits_{i=2}^{M^{x}-1}\,\,\sum\limits_{j=2}^{M^{y}-1}  h^{x}_{i}h^{y}_{j}\left(v_{i-1,j-1,k}+v_{ijk} \right)v_{ijk}
+\sum\limits_{j=1}^{M^{y}-1}(h^{x}_{1}h^{y}_{j}v_{1jk}^{2})
+\sum\limits_{i=1}^{M^{x}-1}(h^{x}_{i}h^{y}_{1}v_{i1k}^{2})\\
& +\sum\limits_{i=1}^{M^{x}-2}\,\, \sum\limits_{j=2}^{M^{y}-1} h^{x}_{i+1}h^{y}_{j}\left(v_{i+1,j-1,k}+v_{ijk} \right)v_{ijk}
+\sum\limits_{j=1}^{M^{y}-1}(h^{x}_{M^{x}}h^{y}_{j}v_{M^{x}jk}^{2})\\
& +\sum\limits_{i=1}^{M^{x}-2}(h^{x}_{i+1}h^{y}_{1}v_{i1k}^{2})
+\sum\limits_{i=2}^{M^{x}-1}\,\,\sum\limits_{j=1}^{M^{y}-2}h^{x}_{i}h^{y}_{j+1}\left(v_{i-1,j+1,k}+v_{ijk} \right)v_{ijk}\\
&+\sum\limits_{j=1}^{M^{y}-1}(h^{x}_{1}h^{y}_{j}v_{1jk}^{2} )
  +\sum\limits_{i=2}^{M^{x}-1}(h^{x}_{i}h^{y}_{M^{y}}v_{iM^{y}k}^{2}) 
\\
&+\sum\limits_{i=1}^{M^{x}-2}\,\,
\sum\limits_{j=2}^{M^{y}-2}h^{x}_{i+1}h^{y}_{j+1}\left(v_{i+1,j+1,k}+v_{ijk} \right)v_{ijk}
+\sum\limits_{j=1}^{M^{y}-1}(h^{x}_{M^{x}}h^{y}_{j+1}v_{M^{x}jk}^{2}) \\
&\qquad \qquad
+\sum\limits_{i=1}^{M^{x}-2}(h^{x}_{i+1}h^{y}_{M^{y}}v_{iM^{y}k}^{2}). 
\end{array}
\ed
The single sums in the above identity are 
all nonnegative, removing them it follows that 
\bd
\begin{array}{ll}
{\mathcal A}_1 &\ge \sum\limits_{i=2}^{M^{x}-1}\,
\sum\limits_{j=2}^{M^{y}-1}h^{x}_{i}h^{y}_{j}v_{i-1,j-1,k}v_{ijk}
+\sum\limits_{i=2}^{M^{x}-1}\,
\sum\limits_{j=2}^{M^{y}-1}h^{x}_{i}h^{y}_{j}v_{ijk}^{2} 
+\sum\limits_{i=1}^{M^{x}-2}\,
\sum\limits_{j=2}^{M^{y}-1}h^{x}_{i+1}h^{y}_{j}v_{i+1,j-1,k}v_{ijk}\\
&+\sum\limits_{i=1}^{M^{x}-2}\,\,
\sum\limits_{j=2}^{M^{y}-1}h^{x}_{i+1}h^{y}_{j}v_{ijk}^{2} 
+\sum\limits_{i=2}^{M^{x}-1}\,\, 
\sum\limits_{j=1}^{M^{y}-2}h^{x}_{i}h^{y}_{j+1}v_{i-1,j+1,k}v_{ijk}
+\sum\limits_{i=2}^{M^{x}-1}\,\,
\sum\limits_{j=1}^{M^{y}-2}h^{x}_{i}h^{y}_{j+1}v_{ijk}^{2}\\
& 
+\sum\limits_{i=1}^{M^{x}-2}\,\,
\sum\limits_{j=1}^{M^{y}-2}h^{x}_{i+1}h^{y}_{j+1}v_{i+1,j+1,k}v_{ijk}
+\sum\limits_{i=1}^{M^{x}-2}\,\,
\sum\limits_{j=1}^{M^{y}-2}h^{x}_{i+1}h^{y}_{j+1}v_{ijk}^{2}=: {\mathcal B}_1. 
\end{array}
\ed
For simplicity we denoted the right hand side above by  ${\mathcal B}_1$.   
Below, once again using the shift law, 
 we make ${\mathcal B}_1$ uniformly indexed, i.e. with all sums 
having the same index range. Then we can easily verify that 
\bd
\begin{array}{ll}
{\mathcal B}_1=&
\sum\limits_{i=2}^{M^{x}-1}\,\,
\sum\limits_{j=2}^{M^{y}-1}h^{x}_{i}h^{y}_{j}v_{i-1,j-1,k}v_{ijk}
+\sum\limits_{i=2}^{M^{x}-1}\,\, 
\sum\limits_{j=2}^{M^{y}-1}h^{x}_{i}h^{y}_{j}v_{i,j-1,k}v_{i-1,j,k}\\
&+\sum\limits_{i=2}^{M^{x}-1}\,\, 
\sum\limits_{j=2}^{M^{y}-1}h^{x}_{i}h^{y}_{j}v_{i-1,j,k}v_{i,j-1,k}
+\sum\limits_{i=2}^{M^{x}-1}\,\, 
\sum\limits_{j=2}^{M^{y}-1}h^{x}_{i}h^{y}_{j}v_{i,j,k}v_{i-1,j-1,k}\\
&+\sum\limits_{i=2}^{M^{x}-1}\,\, 
\sum\limits_{j=2}^{M^{y}-1}h^{x}_{i}h^{y}_{j}v_{ijk}^{2}
+\sum\limits_{i=2}^{M^{x}-1}\,\, 
\sum\limits_{j=2}^{M^{y}-1}h^{x}_{i}h^{y}_{j}v_{i-1,j,k}^{2}\\ 
&+\sum\limits_{i=2}^{M^{x}-1}\, \, 
\sum\limits_{j=2}^{M^{y}-1}h^{x}_{i}h^{y}_{j}v_{i,j-1,k}^{2}
+\sum\limits_{i=2}^{M^{x}-1}\,\, 
\sum\limits_{j=2}^{M^{y}-1}h^{x}_{i}h^{y}_{j}v_{i-1,j-1,k}^{2} \\
&=
\sum\limits_{i=2}^{M^{x}-1}\,\, \sum\limits_{j=2}^{M^{y}-1}h^{x}_{i}h^{y}_{j}
\left( 
v_{i-1,j-1,k}^{2} +2v_{i-1,j-1,k}v_{ijk}+ v_{ijk}^{2} +v_{i,j-1,k}^{2}+\right.\\
& \left .\qquad  +2v_{i,j-1,k}v_{i-1,j,k}+v_{i-1,j,k}^{2}
\right)\\
&\ge
-\frac{1}{4}\left(
\sum\limits_{i=1}^{M^{x}-2}\,\,
\sum\limits_{j=1}^{M^{y}-2}h^{x}_{i+1}h^{y}_{j+1}v_{ijk}^{2} +
\sum\limits_{i=2}^{M^{x}-1}\,\, 
\sum\limits_{j=2}^{M^{y}-1}h^{x}_{i}h^{y}_{j}v_{ijk}^{2}
\right.\\ &
\qquad \quad\left.+
\sum\limits_{i=1}^{M^{x}-2}\,\, 
\sum\limits_{j=2}^{M^{y}-1}h^{x}_{i+1}h^{y}_{j}v_{ijk}^{2}+
\sum\limits_{i=2}^{M^{x}-1}\,\, 
\sum\limits_{j=1}^{M^{y}-2}h^{x}_{i}h^{y}_{j+1}v_{ijk}^{2}
\right).
\end{array}
\ed
Now, recalling the definition of ${\mathcal A}_1$ and 
using the bound for  ${\mathcal B}_1$ iteratively, 
we can derive the following chain of estimates 
\bd
\begin{array}{l}
\frac{1}{16}\sum\limits_{i=1}^{M^{x}-1}\sum\limits_{j=1}^{M^{y}-1}
\left(
12\newhbar^{x}_{i}\newhbar^{y}_{j}v_{ijk}^{2} +
h^{x}_{i}h^{y}_{j}v_{i-1,j-1,k}v_{ijk} +
h^{x}_{i+1}h^{y}_{j}v_{i+1,j-1,k}v_{ijk} 
\right.\\  \left. \qquad\qquad +
h^{x}_{i}h^{y}_{j+1}v_{i-1,j+1,k}v_{ijk} 
h^{x}_{i+1}h^{y}_{j+j}v_{i+1,j+1,k}v_{ijk} \right) 
\\ 
 \ge 
\frac{1}{16}\sum\limits_{i=1}^{M^{x}-1}\sum\limits_{j=1}^{M^{y}-1}
\left(
11\newhbar^{x}_{i}\newhbar^{y}_{j}v_{ijk}^{2} -
\frac{1}{4}\left(
\sum\limits_{i=1}^{M^{x}-2}\sum\limits_{j=1}^{M^{y}-2}h^{x}_{i+1}h^{y}_{j+1}v_{ijk}^{2} 
\right.\right.\\
\qquad  +\left.\left.
\sum\limits_{i=2}^{M^{x}-1}\sum\limits_{j=2}^{M^{y}-1}h^{x}_{i}h^{y}_{j}v_{ijk}^{2}
\sum\limits_{i=1}^{M^{x}-2}\sum\limits_{j=2}^{M^{y}-1}h^{x}_{i+1}h^{y}_{j}v_{ijk}^{2}+
\sum\limits_{i=2}^{M^{x}-1}\sum\limits_{j=1}^{M^{y}-2}h^{x}_{i}h^{y}_{j+1}v_{ijk}^{2}
\right) \right)
\\
 \ge
\frac{10}{16}\sum\limits_{i=1}^{M^{x}-1}\sum\limits_{j=1}^{M^{y}-1}\newhbar^{x}_{i}\newhbar^{y}_{j}v_{ijk}^{2} +
\frac{1}{16}\left(
\sum\limits_{i=1}^{M^{x}-1}\sum\limits_{j=1}^{M^{y}-1} 
\frac{h^{x}_{i}+h^{x}_{i+1}}{4}\frac{h^{y}_{j}+h^{y}_{j+1}}{4}v_{ijk}^{2}
-\sum\limits_{i=1}^{M^{x}-2}\sum\limits_{j=1}^{M^{y}-2}\frac{h^{x}_{i+1}h^{y}_{j+1}}{4}v_{ijk}^{2}
\right.
\\ \left. \qquad -
\sum\limits_{i=2}^{M^{x}-1}\sum\limits_{j=2}^{M^{y}-1}\frac{h^{x}_{i}h^{y}_{j}}{4}v_{ijk}^{2}
-\sum\limits_{i=1}^{M^{x}-2}\sum\limits_{j=2}^{M^{y}-1}\frac{h^{x}_{i+1}h^{y}_{j}}{4}v_{ijk}^{2}
-\sum\limits_{i=2}^{M^{x}-1}\sum\limits_{j=1}^{M^{y}-2}\frac{h^{x}_{i}h^{y}_{j+1}}{4}v_{ijk}^{2}
\right)
\\
=
\frac{10}{16}\sum\limits_{i=1}^{M^{x}-1}\sum\limits_{j=1}^{M^{y}-1}\newhbar^{x}_{i}\newhbar^{y}_{j}v_{ijk}^{2} +
\frac{1}{64}v_{ijk}^{2}\left(
h^{x}_{i}h^{y}_{j} \left(\sum\limits_{i=1}^{M^{x}-1}\sum\limits_{j=1}^{M^{y}-1}1- \sum\limits_{i=2}^{M^{x}-1}\sum\limits_{j=2}^{M^{y}-1}1\right)
\right. \\ 
\left.\qquad +
h^{x}_{i}h^{y}_{j+1} \left(\sum\limits_{i=1}^{M^{x}-1}\sum\limits_{j=1}^{M^{y}-1}1- \sum\limits_{i=2}^{M^{x}-1}\sum\limits_{j=1}^{M^{y}-2}1\right)
+h^{x}_{i+1}h^{y}_{j} \left(\sum\limits_{i=1}^{M^{x}-1}\sum\limits_{j=1}^{M^{y}-1}1- \sum\limits_{i=1}^{M^{x}-2}\sum\limits_{j=2}^{M^{y}-1}1\right)
\right. \\ 
\left. \qquad
+h^{x}_{i+1}h^{y}_{j+1} \left(\sum\limits_{i=1}^{M^{x}-1}\sum\limits_{j=1}^{M^{y}-1}1- \sum\limits_{i=1}^{M^{x}-2}\sum\limits_{j=1}^{M^{y}-2}1\right)
\right)
\ge 
\frac{10}{16}\sum\limits_{i=1}^{M^{x}-1}\sum\limits_{j=1}^{M^{y}-1}\newhbar^{x}_{i}\newhbar^{y}_{j}v_{ijk}^{2}, 
\end{array}
\ed
where in the last step we used that all the differences of the sums are 
positive. Note in particular the role of the coefficient 12 in the 
central differencing term 
and the chain of split
 in this term. Finally, recalling the definition of $(\mu_{xy}v,v]_{z}$, 
we multiply the above estimate by $\newhbar^{z}_{k}$ and sum over $k$ to 
obtain. 
\bd
\begin{array}{l}
(\mu_{xy}v,v]_{z} \ge \frac{10}{16} \sum\limits_{i=1}^{M^{x}-1}\sum\limits_{j=1}^{M^{y}-1}\sum\limits_{k=1}^{M^{z}}
\newhbar^{x}_{i}\newhbar^{y}_{j}\newhbar^{z}_{k}v_{ijk}^{2}
=\frac{10}{16}\Vert v \vert ]_{z}^{2} \ge \frac{1}{2}\Vert v \vert ]_{z}^{2}.
\end{array}
\ed
This completes the proof of the first assertion $i)$ of the lemma. 
The other two estimates are derived by similar 
 calculus, alternating the relevant sub- and super-indices, and 
therefore are omitted. 
\end{proof}
In the general case of $d$ dimensions we can see that 
the coefficient will become $\frac{3\cdot 2^{d-2}-1}{2^{d}}$. 
The general ratio above is linked to the coefficient of the central term 
in the finite difference case \eqref{FinDiffCentral}. 


\begin{lemma}\label{lem32}
Let $v$ be a mesh function on $\bar{\Omega}^{h}$ that vanishes on $\partial \Omega^{h}$, then
$$ \Vert v \Vert^{2} \le \frac{1}{3} \vert v \vert_{1,h}^{2}.
$$
\end{lemma}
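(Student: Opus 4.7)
The plan is to reduce the three-dimensional bound to a one-dimensional discrete Poincaré-type estimate along each coordinate axis, and then average the three directional inequalities. The unit side length of $\Omega=(0,1)^{3}$ is what controls the constants through the discrete fundamental theorem of calculus, and the averaging of three separate directional inequalities is what will produce the announced factor $1/3$.

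Concretely, I exploit $v_{0,j,k}=0$, which follows from the assumption that $v$ vanishes on $\partial\Omega^{h}$. The discrete fundamental theorem of calculus reads
\[
 v_{i,j,k} \;=\; \sum_{l=1}^{i} h^{x}_{l}\,\Delta^{-}_{x} v_{l,j,k},\qquad 1\le i \le M^{x}-1,
\]
and a Cauchy--Schwarz inequality with weights $h^{x}_{l}$, together with $x_{i}=\sum_{l=1}^{i}h^{x}_{l}\le 1$, yields the pointwise bound
\[
 v_{i,j,k}^{2} \;\le\; x_{i}\sum_{l=1}^{M^{x}} h^{x}_{l}\,(\Delta^{-}_{x} v_{l,j,k})^{2}
 \;\le\; \sum_{l=1}^{M^{x}} h^{x}_{l}\,(\Delta^{-}_{x} v_{l,j,k})^{2}.
\]
Multiplying by $\newhbar^{x}_{i}\newhbar^{y}_{j}\newhbar^{z}_{k}$, summing over interior triples $(i,j,k)$, factoring the $l$-sum outside the $i$-sum, and noting the elementary telescoping identity $\sum_{i=1}^{M^{x}-1}\newhbar^{x}_{i}=1-\tfrac{1}{2}(h^{x}_{1}+h^{x}_{M^{x}})\le 1$ then yields the directional estimate
\[
 \|v\|^{2} \;\le\; \|\Delta^{-}_{x} v\,|]_{x}^{2}.
\]

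Repeating this argument verbatim starting from $v_{i,0,k}=0$ and from $v_{i,j,0}=0$ delivers the two companion estimates $\|v\|^{2}\le \|\Delta^{-}_{y} v\,|]_{y}^{2}$ and $\|v\|^{2}\le \|\Delta^{-}_{z} v\,|]_{z}^{2}$. Adding the three bounds and recalling the definition of the discrete $H^{1}$-seminorm gives $3\|v\|^{2}\le|v|_{1,h}^{2}$, which is exactly the asserted inequality.

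The argument presents no genuine obstacle. The only points requiring care are the index ranges --- the discrete $L_{2}$-norm is summed over interior triples whereas each $\|\cdot\,|]_{\alpha}$ runs one index further, so extending the $l$-sum up to $M^{x}$ in the Cauchy--Schwarz step merely adds nonnegative terms and is harmless --- together with the trivial verification $\sum_{i}\newhbar^{x}_{i}\le 1$. The constant $1/3$ so produced is suboptimal (the sharp spectral constant on the unit cube would be $1/(3\pi^{2})$), but this crude bound is entirely sufficient for the stability and convergence analysis that follows and is the natural three-dimensional analogue of the two-dimensional version in \cite{Suli:91}.
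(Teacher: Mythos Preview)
Your proof is correct and follows essentially the same route as the paper: both use the discrete fundamental theorem of calculus from the boundary, apply Cauchy--Schwarz with weights $h^{x}_{l}$, exploit that $\sum_{l=1}^{i}h^{x}_{l}=x_{i}\le 1$ and $\sum_{i}\newhbar^{x}_{i}\le 1$, and then average the three directional estimates to produce the factor $1/3$. The only cosmetic difference is that the paper writes $\|v\|^{2}=\tfrac{1}{3}(\text{three copies})$ at the outset and bounds each copy, whereas you derive three separate inequalities $\|v\|^{2}\le \|\Delta^{-}_{\alpha}v\,|]_{\alpha}^{2}$ and add them; these are algebraically identical.
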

\begin{proof} Using the definitions of the divided differences and 
following the notation, the desired result is obtained 
through the successive estimates below 
\bd
\begin{array}{l}
\Vert v \Vert ^{2} = \sum\limits_{i=1}^{M^{x}-1}\sum\limits_{j=1}^{M^{y}-1}\sum\limits_{k=1}^{M^{z}-1}\newhbar^{x}_{i}\newhbar^{y}_{j}\newhbar^{z}_{k}v_{ijk}^{2}
= \frac{1}{3}\left(
\sum\limits_{i=1}^{M^{x}-1}\sum\limits_{j=1}^{M^{y}-1}\sum\limits_{k=1}^{M^{z}-1}\newhbar^{x}_{i}\newhbar^{y}_{j}\newhbar^{z}_{k}\vert\sum\limits_{m=1}^{i}h^{x}_{m}\Delta_{x}^{-}v_{mjk}\vert^{2}
\right. \\ \left. 
+\sum\limits_{i=1}^{M^{x}-1}\sum\limits_{j=1}^{M^{y}-1}\sum\limits_{k=1}^{M^{z}-1}\newhbar^{x}_{i}\newhbar^{y}_{j}\newhbar^{z}_{k}\vert\sum\limits_{m=1}^{j}h^{y}_{m}\Delta_{y}^{-}v_{imk}\vert^{2}
+\sum\limits_{i=1}^{M^{x}-1}\sum\limits_{j=1}^{M^{y}-1}\sum\limits_{k=1}^{M^{z}-1}\newhbar^{x}_{i}\newhbar^{y}_{j}\newhbar^{z}_{k}\vert\sum\limits_{m=1}^{k}h^{z}_{m}\Delta_{z}^{-}v_{ijm}\vert^{2}
\right)
 \\ 
\qquad \quad\le
\frac{1}{3}\left(
\sum\limits_{i=1}^{M^{x}-1}\sum\limits_{j=1}^{M^{y}-1}\sum\limits_{k=1}^{M^{z}-1}\newhbar^{x}_{i}\newhbar^{y}_{j}\newhbar^{z}_{k}
\left(
\left(\sum\limits_{m=1}^{i}h^{x}_{m}\right)\left(\sum\limits_{m=1}^{i}h^{x}_{m}\vert \Delta^{-}_{x} v_{mjk} \vert^{2}\right)
\right. \right. \\ \left. \left. \qquad \quad 
+\left(\sum\limits_{m=1}^{i}h^{x}_{m}\right)\left(\sum\limits_{m=1}^{j}h^{y}_{m}\vert \Delta^{-}_{y} v_{imk} \vert^{2}\right)
+\left(\sum\limits_{m=1}^{i}h^{x}_{m}\right)\left(\sum\limits_{m=1}^{k}h^{z}_{m}\vert \Delta^{-}_{z} v_{ijm} \vert^{2}\right)
\right)
\right)
 \\ \qquad \quad =
\frac{1}{3}
\left( \sum\limits_{m=1}^{M^{x}}\sum\limits_{j=1}^{M^{y}-1}\sum\limits_{k=1}^{M^{z}-1}h^{x}_{m}\vert\Delta^{-}_{x}v_{mjk} \vert^{2}\newhbar^{y}_{j}\newhbar^{z}_{k}\right)
\left(\sum\limits_{i=1}^{M^{x}-1}\newhbar^{x}_{i}\sum\limits_{m=1}^{i}h^{x}_{m}\right)
\\ \qquad\quad +
\frac{1}{3}
\left( \sum\limits_{i=1}^{M^{x}-1}\sum\limits_{m=1}^{M^{y}}\sum\limits_{k=1}^{M^{z}-1}h^{y}_{m}\vert\Delta^{-}_{y}v_{imk} \vert^{2}\newhbar^{x}_{i}\newhbar^{z}_{k}\right)
\left(\sum\limits_{j=1}^{M^{y}-1}\newhbar^{y}_{j}\sum\limits_{m=1}^{j}h^{y}_{m}\right)
\\ \qquad \quad+
\frac{1}{3}
\left( \sum\limits_{i=1}^{M^{x}-1}\sum\limits_{j=1}^{M^{y}-1}\sum\limits_{m=1}^{M^{z}}h^{z}_{m}\vert\Delta^{-}_{z}v_{ijm} \vert^{2}\newhbar^{x}_{i}\newhbar^{y}_{j}\right)
\left(\sum\limits_{k=1}^{M^{z}-1}\newhbar^{z}_{k}\sum\limits_{m=1}^{k}h^{z}_{m}\right)
\\ \qquad\quad \le
\frac{1}{3}\left( \Vert \Delta^{-}_{x}v \vert]_{x}^{2} + \Vert \Delta^{-}_{y}v \vert]_{y}^{2} + \Vert \Delta^{-}_{z}v \vert]_{z }^{2} \right)
= \frac{1}{3}\vert v \vert_{1,h}^{2} \le \frac{1}{2}\vert v \vert_{1,h}^{2}.
\end{array}
\ed
\end{proof}
In the general case of $d$ dimensions the coefficient $1/3$ above, becomes $1/d$.

Based on these estimates 
we can prove the counterparts of Theorems 3.1 and 3.2 in 
\cite{Suli:91} in three (as well as higher) dimensions.
\begin{theorem}\label{thm31}
Let $L^{h}v=-(\Delta^{+}_{x}\Delta^{-}_{x}\mu_{yz}+\Delta^{+}_{y}\Delta^{-}_{y}\mu_{xz}+\Delta^{+}_{z}\Delta^{-}_{z}\mu_{xy})v$, then
$$ \Vert v \Vert_{1,h} \le \frac{32}{15}\Vert L^{h}v \Vert_{-1,h}. $$
\end{theorem}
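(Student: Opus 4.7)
The plan is to derive the estimate by combining a coercivity bound for the bilinear form $(L^h v, v)$ on the discrete $H^1_h$ side with the duality definition of $\norm{\cdot}_{-1,h}$ on the other side. Concretely, by the definition of the discrete $H^{-1}$-norm,
\[
\abs{(L^h v, v)} \le \norm{L^h v}_{-1,h}\, \norm{v}_{1,h},
\]
so it suffices to prove the lower bound $(L^h v, v) \ge \tfrac{15}{32}\norm{v}_{1,h}^2$, from which the stated inequality follows by dividing by $\norm{v}_{1,h}$.

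The first step is a discrete summation by parts. Since $v$ vanishes on $\partial\Omega^h$, the boundary contributions vanish and one obtains
\[
-(\Delta^{+}_{x}\Delta^{-}_{x}\mu_{yz}v, v) = (\Delta^{-}_{x}\mu_{yz}v, \Delta^{-}_{x}v]_{x} = (\mu_{yz}\Delta^{-}_{x}v, \Delta^{-}_{x}v]_{x},
\]
using the key observation that $\mu_{yz}$ acts only on the $j$- and $k$-indices and therefore commutes with $\Delta^{-}_{x}$. The analogous identities hold for the $y$- and $z$-directions. This reduces the coercivity question to three quadratic forms $(\mu_{\alpha\beta} w, w]_{\gamma}$ with $w = \Delta^{-}_{\gamma} v$.

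The second step is to invoke Lemma~\ref{lem31}. Its hypothesis requires the test function to vanish on the appropriate boundary; for $w = \Delta^{-}_{x}v$ and case (iii), one must check that $\Delta^{-}_{x}v = 0$ on $\partial\Omega^{h}_{yz}$, which follows immediately from $v=0$ on $\partial\Omega^{h}$ (both $v_{ijk}$ and $v_{i-1,j,k}$ vanish when $j\in\{0,M^y\}$ or $k\in\{0,M^z\}$); the analogous check works for the other two directions. Applying the lemma to each of the three pieces and summing yields
\[
(L^h v, v) \ge \tfrac{5}{8}\bigl(\norm{\Delta^{-}_{x}v\,|\,}_{x}^{2} + \norm{\Delta^{-}_{y}v\,|\,}_{y}^{2} + \norm{\Delta^{-}_{z}v\,|\,}_{z}^{2}\bigr) = \tfrac{5}{8}\,\abs{v}_{1,h}^{2}.
\]

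Finally, by Lemma~\ref{lem32}, $\norm{v}^{2}\le \tfrac{1}{3}\abs{v}_{1,h}^{2}$, hence $\norm{v}_{1,h}^{2} = \norm{v}^{2} + \abs{v}_{1,h}^{2} \le \tfrac{4}{3}\abs{v}_{1,h}^{2}$, i.e.\ $\abs{v}_{1,h}^{2} \ge \tfrac{3}{4}\norm{v}_{1,h}^{2}$. Chaining the inequalities gives $(L^h v, v) \ge \tfrac{5}{8}\cdot\tfrac{3}{4}\norm{v}_{1,h}^{2} = \tfrac{15}{32}\norm{v}_{1,h}^{2}$, and combining with the duality bound completes the proof. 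The only delicate point is the bookkeeping in the summation-by-parts identity and verifying that the commutation $\mu_{yz}\Delta^{-}_{x} = \Delta^{-}_{x}\mu_{yz}$ together with the induced boundary vanishing of $\Delta^{-}_{x}v$ places us cleanly in the hypothesis of Lemma~\ref{lem31}; once that is set up, the rest is a direct combination of the two previously established coercivity-type estimates.
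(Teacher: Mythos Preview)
Your proof is correct and follows essentially the same approach as the paper's: summation by parts to reduce $(L^h v, v)$ to three terms of the form $(\mu_{\alpha\beta}\Delta^-_\gamma v,\Delta^-_\gamma v]_\gamma$, then Lemma~\ref{lem31} and Lemma~\ref{lem32} to obtain the constant $\tfrac{5}{8}\cdot\tfrac{3}{4}=\tfrac{15}{32}$, and finally the definition of $\norm{\cdot}_{-1,h}$. Your write-up is in fact slightly more careful than the paper's, since you make explicit the commutation $\mu_{yz}\Delta^-_x = \Delta^-_x\mu_{yz}$ and the verification that $\Delta^-_\gamma v$ vanishes on $\partial\Omega^h_{\alpha\beta}$, both of which the paper passes over silently.
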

\begin{proof}
Evidently, we have the identities 
\bd
(-\Delta^{+}_{x}w,v)=(w,\Delta^{-}_{x}v]_{x},\quad (-\Delta^{+}_{y}w,v)=(w,\Delta^{-}_{y}v]_{y},\quad (-\Delta^{+}_{z}w,v)=(w,\Delta^{-}_{z}v]_{z}. 
\ed
Therefore, using Lemmas \ref{lem31} and \ref{lem32} yields 
\bd
\begin{array}{ll}
(L^{h}v,v)&=(-(\Delta^{+}_{x}\Delta^{-}_{x}\mu_{yz}+\Delta^{+}_{y}\Delta^{-}_{y}\mu_{xz}+\Delta^{+}_{z}\Delta^{-}_{z}\mu_{xy})v,v)
\\ &=
(\Delta^{-}_{x}\mu_{yz}v,\Delta^{-}_{x}v]_{x}+(\Delta^{-}_{y}\mu_{xz}v,\Delta^{-}_{y}v]_{y}+(\Delta^{-}_{z}\mu_{xy}v,\Delta^{-}_{z}v]_{z}\\
& \ge \frac{5}{8} \left(\Vert \Delta^{-}_{x}v \vert ]_{x} + \Vert \Delta^{-}_{y}v \vert ]_{y} + \Vert \Delta^{-}_{z}v \vert ]_{z}\right)=
\frac{5}{8} \vert v \vert_{1,h}^{2} \ge \frac{15}{32} \Vert v \Vert_{1,h}^{2}.
\end{array}
\ed
Thus, by the definition of $\Vert \cdot \Vert_{-1,h}$ we obtain,
\bd
\Vert v \Vert_{1,h} \le \frac{32}{15}\Vert L^{h}v \Vert_{-1,h}.
\ed
\end{proof}

In $d$ dimensions following the same procedure we obtain 
\bd
\Vert v \Vert_{1,h} \le \frac{2^{d}\left(1+d\right)}{d\left( 3 \cdot 2^{d-2} - 1\right) } \Vert L^{h} v \Vert_{-1,h}.
\ed

\begin{theorem}\label{thm32}
If $f\in H^{\sigma}(\Omega)$, $\sigma>-1/2$,  
then the convolution $T_{111}$ is continuous and 
the equation \eqref{eqfds} has a unique solution $u^{h}$. Further, 
\bd \Vert u^{h} \Vert_{1,h} \le \frac{32}{30} \Vert T_{111}f \Vert_{-1,h}.\ed
\end{theorem}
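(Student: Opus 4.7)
The plan is to handle the three claims sequentially, leaning heavily on Theorem \ref{thm31} for the finite-dimensional existence/uniqueness step and for the final inequality, while reserving the real analytical work for verifying that $T_{111}f$ is well defined as a mesh function.

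First I would establish that $\chi_{ijk}\ast f$ is a continuous function on $\mathbb R^3$, so that pointwise evaluation at $(x_i,y_j,z_k)$ makes sense. This is the one place where the hypothesis $\sigma>-1/2$ is essential: passing to Fourier variables, one writes $\widehat{\chi_{ijk}\ast f}=\widehat\chi_{ijk}\cdot\widehat f$ and bounds its $L^1(\mathbb R^3)$ norm via Cauchy--Schwarz by
\[
\int_{\mathbb R^3}|\widehat\chi_{ijk}(\xi)||\widehat f(\xi)|\,d\xi\le\Big(\int_{\mathbb R^3}(1+|\xi|^2)^{-\sigma}|\widehat\chi_{ijk}(\xi)|^2\,d\xi\Big)^{1/2}\|f\|_{H^{\sigma}},
\]
reducing the question to $\chi_{ijk}\in H^{-\sigma}(\mathbb R^3)$. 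This is exactly the borderline regularity computed earlier for $\chi_{ijk}$, valid for every index strictly less than $1/2$, and it matches $-\sigma<1/2$. Hence $\widehat{\chi_{ijk}\ast f}\in L^1(\mathbb R^3)$ and Fourier inversion gives $\chi_{ijk}\ast f\in C^0(\mathbb R^3)$.

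Next, existence and uniqueness of $u^h$ amount to invertibility of $2L^h$ on the finite-dimensional space of mesh functions on $\bar\Omega^h$ vanishing on $\partial\Omega^h$. For this I would simply observe that Theorem \ref{thm31} already forces injectivity: if $L^h v=0$ then $\|v\|_{1,h}\le \tfrac{32}{15}\|L^h v\|_{-1,h}=0$, so $v\equiv 0$, and an injective endomorphism of a finite-dimensional vector space is bijective. Consequently \eqref{eqfds} admits a unique solution $u^h$ for every admissible right-hand side.

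Finally, the stability bound is obtained by applying Theorem \ref{thm31} directly to $u^h$ and unwinding the factor $2$ in \eqref{eqfds}: since $2L^hu^h=T_{111}f$,
\[
\|u^h\|_{1,h}\le \tfrac{32}{15}\|L^hu^h\|_{-1,h}=\tfrac{32}{15}\cdot\tfrac12\|T_{111}f\|_{-1,h}=\tfrac{32}{30}\|T_{111}f\|_{-1,h}.
\]
The only genuinely delicate point in the whole argument is the Fourier-analytic continuity step, where one has to be sure the singular decay of $|\widehat\chi_{ijk}|^2$ along the coordinate axes is absorbed by the weight $(1+|\xi|^2)^{-\sigma}$; once that is in place, the remaining two assertions are two-line consequences of Theorem \ref{thm31}.
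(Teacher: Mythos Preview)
Your proposal is correct and matches the paper's approach: the paper's proof is the single line ``Follows directly from Eq.~\eqref{eqfds} and Theorem~\ref{thm31},'' with the continuity of $\chi_{ijk}\ast f$ (via $\chi_{ijk}\in H^\tau$, $\tau<1/2$) already established in the text preceding the scheme. You have simply spelled out the Fourier-side pairing, the finite-dimensional injectivity $\Rightarrow$ bijectivity argument, and the factor-of-$2$ bookkeeping that the paper leaves implicit.
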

\begin{proof}
Follows directly from Eq. \eqref{eqfds} and Theorem \ref{thm31}.
\end{proof}
In $d$ dimensions we will obtain 
\bd
\Vert u^{h} \Vert_{1,h} \le \frac{2^{d}\left(1+d\right)}{2d\left( 3 \cdot 2^{d-2} - 1\right)} \Vert T_{1\ldots 1}f \Vert_{-1,h}.
\ed

\section{Convergence analysis}
In this section we derive convergence rate for the proposed 
finite volume scheme. 
Most of the results in here hold true for the corresponding finite 
difference- and finite element-schemes as well. In the  convergence 
rate proofs, 
we shall use the following classical 
result:
 
\begin{theorem}\label{thm42}
Let $W$ be a Banach space and $W_{1}$ a normed linear space. Let  
$A:W\to W_{1}$ be a compact linear operator
and let $S_{1}:W\to\mathbb{R}$ and $S_{2}:W\to\mathbb{R}$ 
denote two bounded sublinear functionals
(i.e. 
$S_{i}(\alpha u + \beta v)\le \vert \alpha\vert S_{i}(u)+
\vert\beta \vert S_{i}(v)$ for $,\alpha,\,\beta \in \mathbb{R}$ and $,u,\,v \in W$).
Further, assume that there exists a constant $C_{0}$ such that,
$$
\Vert v\Vert_{W} \le C_{0} \left(\Vert Av \Vert_{W_{1}} + S_{2}(v) \right)\qquad 
\forall {v\in W}, 
$$
and that $\mathrm{Ker}(S_{2}) \subset \mathrm{Ker}(S_{1})$.
Then
\begin{enumerate}[i)]
\item $P:=\mathrm{Ker}(S_{2})$ is a finite dimensional vector space,
\item there exists a constant $C_{1}$ 
such that $\inf\limits_{p\in P}\Vert v-p \Vert_{W} \le C_{1}S_{2}(v)$ 
\quad $\forall {v\in W}$,
\item there exists a constant $C_{2}$ such that $S_{1}(v) \le C_{2}S_{2}(v)$.
\end{enumerate}
\end{theorem}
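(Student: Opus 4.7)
The plan is to treat the three assertions in order, using the hypothesis
$\|v\|_W \le C_0(\|Av\|_{W_1} + S_2(v))$ as a G{\aa}rding-type coercivity estimate
and the compactness of $A$ as a means of converting weak information (smallness of $S_2$) into
strong information (convergence in $W$). Before starting I would verify that
$P = \mathrm{Ker}(S_2)$ is a linear subspace, which follows immediately from the sublinearity
inequality $S_2(\alpha u + \beta v) \le |\alpha|S_2(u) + |\beta|S_2(v)$, and that $P$ is closed,
which follows from the continuity of $S_2$ (itself a consequence of sublinearity together with
boundedness: $|S_2(u)-S_2(v)| \le M_2\|u-v\|_W$).

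For (i), I would restrict the coercivity estimate to $P$: for $v \in P$ we have $S_2(v)=0$, so
$\|v\|_W \le C_0 \|Av\|_{W_1}$. Given any bounded sequence $\{v_n\} \subset P$, compactness of
$A$ yields a subsequence with $\{Av_{n_k}\}$ Cauchy in $W_1$; applied to differences
$v_{n_k}-v_{n_\ell} \in P$, the coercivity bound then forces $\{v_{n_k}\}$ to be Cauchy in $W$.
Hence the closed unit ball of $P$ is compact, and Riesz's lemma gives $\dim P < \infty$.

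For (ii), I would argue by contradiction. Suppose the inequality fails for every constant; then
there is a sequence $\{v_n\} \subset W$ with $\inf_{p \in P}\|v_n-p\|_W = 1$ and $S_2(v_n) < 1/n$.
Picking near-infimizers $p_n \in P$ and setting $w_n := v_n - p_n$, one gets $\|w_n\|_W \le 2$,
$S_2(w_n) = S_2(v_n) < 1/n$ (since $S_2$ is invariant under translations by elements of $P$), and
still $\inf_{p \in P}\|w_n-p\|_W = 1$. Compactness of $A$ extracts a subsequence with $\{Aw_n\}$
Cauchy, and the coercivity estimate applied to differences
$\|w_n-w_m\|_W \le C_0(\|Aw_n-Aw_m\|_{W_1} + S_2(w_n-w_m))$, combined with
$S_2(w_n-w_m) \le S_2(w_n) + S_2(-w_m) \le 2/\min(n,m)$, produces a Cauchy subsequence in $W$
with limit $w$. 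Continuity of $S_2$ gives $S_2(w)=0$, so $w \in P$, contradicting
$\inf_{p \in P}\|w-p\|_W = 1$. This contradiction yields the desired constant $C_1$.

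For (iii), I would use the inclusion $\mathrm{Ker}(S_2) \subset \mathrm{Ker}(S_1)$ together with
the bound already produced in (ii). Given $v \in W$ and any near-infimizer $p \in P$ of
$\|v-\cdot\|_W$, sublinearity gives $S_1(v) \le S_1(v-p) + S_1(p) = S_1(v-p)$, because
$S_1(p)=0$ on $P$; boundedness then gives $S_1(v-p) \le M_1 \|v-p\|_W$, and taking the infimum
over $p$ and applying (ii) produces $S_1(v) \le M_1 C_1 S_2(v)$, so $C_2 := M_1 C_1$ works.

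The genuinely delicate step is (ii): the subtlety is that sublinear functionals do not satisfy
$S_2(-v)=S_2(v)$ with equality and do not descend to linear functionals on $W/P$, so one cannot
invoke the open mapping theorem directly. The compactness--contradiction argument above is the
natural way to bypass this; the only care needed is in verifying that $S_2$ is translation
invariant modulo $P$ and in using sublinearity rather than linearity when bounding $S_2(w_n-w_m)$.
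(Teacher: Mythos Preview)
Your proof is correct and follows the same three-step architecture as the paper's argument (which is carried out in the Appendix as Theorem~\ref{thmTartar}): compactness plus Riesz for (i), a contradiction/normalization argument for (ii), and a direct estimate for (iii). There are, however, two genuine differences worth flagging.

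First, in step (ii) the paper extracts a \emph{weakly} convergent subsequence of the normalized sequence $\tilde g_{n}$ and then uses compactness of $A$ to upgrade to strong convergence of $A\tilde g_{n}$. That step tacitly assumes bounded sequences in $W$ admit weakly convergent subsequences, which is not guaranteed for a general Banach space. Your route bypasses this: you use compactness of $A$ only to make $\{Aw_{n}\}$ Cauchy, and then feed differences $w_{n}-w_{m}$ into the coercivity inequality together with the sublinear bound $S_{2}(w_{n}-w_{m})\le S_{2}(w_{n})+S_{2}(w_{m})$ to conclude that $\{w_{n}\}$ itself is Cauchy. This is more elementary and valid in any Banach space.

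Second, the paper states Theorem~\ref{thm42} for sublinear functionals but proves it by invoking the Appendix theorem, which is formulated for \emph{linear} operators $A_{1}$ and $L$; the passage from linear to sublinear is left implicit. You handle sublinearity head-on, checking that $S_{2}$ is translation-invariant modulo $P$ and that $S_{i}(-v)=S_{i}(v)$ (both consequences of the defining inequality with suitable choices of $\alpha,\beta$), which is exactly what is needed to make the Cauchy argument and step (iii) go through. In short: same skeleton, but your version is cleaner on both of the points where the paper's proof is loose.
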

\begin{proof}
Follows directly from Theorem \ref{thmTartar} (see Appendix) by taking $E=W$,  
$E_{0},\, E_{1},\, F=W_{1}$, $S_{1}=L$ and $S_{2}=A_{1}$.
\end{proof}
 
\begin{theorem}\label{thm41}
If $u\in H^{1+\sigma}(\Omega)$, 
$1/2<\sigma\le 2$, 
then
\bd
\Vert u - u^{h}\Vert_{1,h} \le Ch^{\sigma}\vert u \vert_{H^{1+\sigma}(\Omega)},
\ed
where $h=\max_{i,j,k}(h^{x}_{i},h^{y}_{j},h^{z}_{k})$ and the constant 
$C>0$ does not depend on $u$ and the 
discretization parameters. 
\end{theorem}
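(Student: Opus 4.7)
The natural approach is to insert the trilinear interpolant $\pi u\in S_0^h$ of $u$ and split the error via the triangle inequality
\bd
\Vert u-u^h\Vert_{1,h}\le \Vert u-\pi u\Vert_{1,h}+\Vert \pi u-u^h\Vert_{1,h}.
\ed
The first term is a pure approximation error on the tensor-product trilinear finite element space; provided $\sigma>1/2$, so that point values on the grid lines are controlled via a trace/Sobolev embedding, a standard Bramble--Hilbert argument on the reference cube yields $\Vert u-\pi u\Vert_{1,h}\le Ch^{\sigma}\vert u\vert_{H^{1+\sigma}(\Omega)}$, uniformly for $1/2<\sigma\le 2$.

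For the second term, since $\pi u-u^h$ is a mesh function vanishing on $\partial\Omega^h$, Theorem \ref{thm31} combined with the scheme $L^h u^h=T_{111}f$ gives
\bd
\Vert \pi u-u^h\Vert_{1,h}\le \frac{32}{15}\Vert L^h\pi u-T_{111}f\Vert_{-1,h},
\ed
so it remains to estimate the truncation error in the discrete $H^{-1}$ norm. Using the divergence-theorem identity that motivated the scheme together with $-\nabla^2 u=f$, I would rewrite
\bd
(L^h\pi u-T_{111}f)_{ijk}=\frac{-2}{\vert \omega_{ijk}\vert}\int_{\partial \omega_{ijk}}\frac{\partial(\pi u-u)}{\partial \mathbbm{n}}\,\ud s,
\ed
so the consistency error is an averaged flux of the trilinear interpolation error across the faces of the dual control volume $\omega_{ijk}$.

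To pass to the negative norm, I would pair against an arbitrary mesh function $w$ with $\Vert w\Vert_{1,h}=1$, perform a summation by parts that collects adjacent face contributions into the divided differences $\Delta^{\pm}_{x,y,z}w$ (the discrete analogue of $\int(\nabla\cdot F)w=-\int F\cdot\nabla w$), and then localize to individual primal finite-element cells. On each reference cell the resulting linear functional in $u$ annihilates polynomials of degree one (and in fact all trilinears by construction), so the abstract Theorem \ref{thm42} applied with $S_2=\vert\cdot\vert_{H^{1+\sigma}}$ and $S_1$ the local truncation functional yields a local bound $Ch^{\sigma}\vert u\vert_{H^{1+\sigma}(K)}$ on each cell $K$. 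Summing over cells via Cauchy--Schwarz and using $\vert w\vert_{1,h}\le\Vert w\Vert_{1,h}=1$ closes the estimate.

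The principal obstacle is this last paragraph: a careful summation by parts in the face-integral representation that preserves the correct $h^{\sigma}$ scaling and matches each face contribution with the right finite difference of $w$. A further subtlety is that $\sigma$ is allowed to be fractional; for non-integer $\sigma$ the Bramble--Hilbert step requires either real interpolation between the integer endpoints $\sigma=1$ and $\sigma=2$, or a direct fractional Sobolev--Poincar\'e estimate on the reference cube. One must also verify that all constants are independent of the non-uniform mesh, which is precisely why the dimension-explicit coercivity constants of Lemmas \ref{lem31}--\ref{lem32} were recorded in the previous section.
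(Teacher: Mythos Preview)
Your strategy is the paper's strategy: use the stability estimate of Theorem~\ref{thm31}, write the truncation error in ``discrete divergence'' form $L^h z=\Delta^+_x\eta_1+\Delta^+_y\eta_2+\Delta^+_z\eta_3$, pass to the $\Vert\cdot\Vert_{-1,h}$ norm by summation by parts, and bound each $\eta_i$ locally via the abstract Bramble--Hilbert device of Theorem~\ref{thm42}. Two points are worth flagging.

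First, your triangle-inequality split through $\pi u$ is vacuous: the norm $\Vert\cdot\Vert_{1,h}$ is a mesh-function norm, and $\pi u$ agrees with $u$ at every grid point, so $\Vert u-\pi u\Vert_{1,h}=0$. The paper accordingly works directly with $z=u-u^h$ (grid restriction of $u$ minus $u^h$) and never introduces an interpolant. Your flux identity for $L^h\pi u-T_{111}f$ is correct and is exactly what the paper records; after the summation by parts one lands on $\eta_1=T^{-}_{011}\partial_x u-\Delta^-_x\mu_{yz}u$, a functional of $\partial_x u$ rather than of $u$.

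Second, and more substantively, the paper does \emph{not} apply Theorem~\ref{thm42} with the isotropic seminorm $S_2=|\cdot|_{H^{1+\sigma}}$ acting on $u$. Instead it takes one derivative first, sets $\tilde v=h^x_i\,\partial_x u$, maps to a fixed reference brick $\tilde\omega$, and chooses the \emph{anisotropic} seminorm
\[
S_2(\tilde v)=\Big(|\tilde v|_{H^{\sigma,0,0}(\tilde\omega)}^2+|\tilde v|_{H^{0,\sigma,0}(\tilde\omega)}^2+|\tilde v|_{H^{0,0,\sigma}(\tilde\omega)}^2\Big)^{1/2}.
\]
This is what makes the argument close on a genuinely non-uniform tensor mesh: each term $|\partial_x u|_{H^{\sigma,0,0}}$ scales with a single factor $(h^x_i)^{-\sigma}$ under the affine change of variables, so after undoing the scaling the contributions from the three directions recombine into $h^{2\sigma}$ times local seminorms, and the super-additivity over the disjoint bricks $\omega_{ijk}^{\pm\pm}$ yields the global $|u|_{H^{1+\sigma}(\Omega)}$. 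With your isotropic $|\cdot|_{H^{1+\sigma}}$ the change of variables on an anisotropic brick $(h^x_i,h^y_j,h^z_k)$ does not produce a clean power of $h$; you would effectively be forced to assume quasi-uniformity to salvage the scaling, which is stronger than what the theorem claims. So the ``careful summation by parts'' is not the obstacle --- that part goes through exactly as you sketch --- but the correct choice of $S_2$ (anisotropic, applied to $\partial u$) is the step that your outline is missing.
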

This is an optimal result corresponding to a finite element 
approach without a quadrature 
(gives an $L_2$-estimate of order  ${\mathcal O}(h^{\sigma +1})$). 
With the 
same regularity, i.e.  $u\in H^{1+\sigma}(\Omega)$, the corresponding 
$L_2$-estimate for 
the finite element method with quadrature rule, 
and the finite difference method, 
would have a lower convergence rate of order 
${\mathcal O}(h^{\sigma +1/2})$.

\begin{proof} 
For a cuboid 
$\omega =\Pi_{i=1}^d\omega_i :=  \Pi_{i=1}^d[a_i, b_i]\subset \mathbb{R}^{d}$ and 
a  $d$-dimensional multi-index $\alpha:= (\alpha_1,\ldots,\alpha_d)$, 
for $i=1,\ldots, d$, we use the notation 
$\alpha^{i}:=(0,\ldots, 0, \alpha_{i},\ldots, 0)$ (only the $i$-th coordinate 
is non--zero) and set 
$\omega_{-i}:= \omega\setminus\omega_{i}$. 
Further we denote by $x_{-i}$ 
the $(d-1)$ dimensional vector 
$x_{-i}:=(x_1,\ldots, x_{i-1}, x_{i+1},\ldots x_d)$. 
Then  we define $H^{\alpha}(\omega)$, the anisotropic Sobolev space,  
that consists of all functions $u \in L_{2}(\omega)$ such that 

\bd
\norm{ u }_{H^{\alpha^{i}}(\omega)} = 
\Big(\int_{\omega_{-i}} \vert u(x_{-i})\vert_{H^{\alpha_{i}}(\omega_{i})}^{2} \, dx 
\Big)^{1/2} < \infty. 
\ed 

$H^{\alpha}(\omega)$ is a Banach space with the norm,
\bd
\Vert u \Vert_{H^{\alpha}(\omega)} = \left(\Vert u \Vert_{L^{2}(\omega)}^{2} + \vert u \vert_{H^{\alpha}(\omega)}^{2} \right)^{1/2} =
\left(\Vert u \Vert_{L^{2}(\omega)}^{2} + \sum\limits_{i=1}^{d} \vert u \vert_{H^{\alpha^{i}}(\omega)}^{2} \right)^{1/2}, 
\ed
see, e.g. \cite{J.L.Lions:61}. Further, if   
we denote the global error function 
by $z=u-u^{h}$, then as $T_{111}f=L^{h}u^{h}$ and $f=-\Delta u$  we have,
\bd
L^{h}z = \left( T_{111}\frac{\partial^{2}u}{\partial x^{2}} - \Delta^{+}_{x} \Delta^{-}_{x} \mu_{yz} u \right)
+ \left( T_{111}\frac{\partial^{2}u}{\partial y^{2}} - \Delta^{+}_{y} \Delta^{-}_{y} \mu_{xz} u \right)
+ \left( T_{111}\frac{\partial^{2}u}{\partial z^{2}} - \Delta^{+}_{z} \Delta^{-}_{z} \mu_{xy} u \right). 
\ed
We can easily verify that 
\bd
\begin{array}{ll}
(T_{111} \frac{\partial^{2}u}{\partial x^{2}})_{ijk} &= \frac{1}{2} \frac{1}{\newhbar^{x}_{i}\newhbar^{y}_{j}\newhbar^{z}_{k}} \left( \chi_{ijk} \ast \frac{\partial^{2}u}{\partial x^{2}}\right)(x_{ijk})\\
&= \frac{1}{2} \frac{1}{\newhbar^{x}_{i}\newhbar^{y}_{j}\newhbar^{z}_{k}} 
\int\limits_{z_{k-1/2}}^{z_{k+1/2}}\int\limits_{y_{j-1/2}}^{y_{j+1/2}} 
\frac{\partial u}{\partial x}(x_{i+1/2},y,z) - \frac{\partial u}{\partial x}(x_{i-1/2},y,z) \ud y\ud x\\
&= \frac{1}{2}\Delta^{+}_{x} (T^{-}_{011} \frac{\partial u}{\partial x})_{ijk},
\end{array}
\ed
where,
\bd
(T^{-}_{011} w)_{ijk} = \frac{1}{\newhbar^{y}_{j} \newhbar^{z}_{k}} \int\limits_{z_{k-1/2}}^{z_{k+1/2}}\int\limits_{y_{j-1/2}}^{y_{j+1/2}} w(x_{i-1/2},y,z) \ud y \ud z.
\ed
$( T_{111}\frac{\partial^{2}u}{\partial y^{2}})_{ijk}$, and $( T_{111}\frac{\partial^{2}u}{\partial y^{2}})_{ijk}$
are treated in analogous fashion, e.g. 
\bd
\begin{array}{ll}
(T^{-}_{101} w)_{ijk} & = \frac{1}{\newhbar^{x}_{i} \newhbar^{z}_{k}} \int\limits_{z_{k-1/2}}^{z_{k+1/2}}\int\limits_{x_{i-1/2}}^{x_{i+1/2}} w(x,y_{j-1/2},z) \ud x \ud z 
(T^{-}_{101} w)_{ijk} \\
&= \frac{1}{\newhbar^{x}_{i} \newhbar^{y}_{j}} \int\limits_{y_{j-1/2}}^{y_{j+1/2}}\int\limits_{x_{i-1/2}}^{x_{i+1/2}} w(x,y,z_{k-1/2}) \ud x \ud y.
\end{array}
\ed
This gives us 
\be\label{eq42}
\left\{
\begin{array}{rcl}
L^{h}z &  =  &\Delta^{+}_{x} \eta_{1} + \Delta^{+}_{y} \eta_{2} + \Delta^{+}_{z} \eta_{3} \qquad \mathrm{in}\quad\Omega^{h}, \\
z & = &  0\qquad\mathrm{on}\quad\partial \Omega^{h},
\end{array}
\right.
\ee
with
\bd
\begin{array}{rcl}
\eta_{1} & = & \frac{1}{2} T^{-}_{011} \frac{\partial u}{\partial x} - \Delta^{-}_{x}\mu_{yz}u, \\
\eta_{2} & = & \frac{1}{2} T^{-}_{101} \frac{\partial u}{\partial y} - \Delta^{-}_{y}\mu_{xz}u, \\
\eta_{3} & = & \frac{1}{2} T^{-}_{110} \frac{\partial u}{\partial z} - \Delta^{-}_{z}\mu_{xy}u.
\end{array}
\ed
Now from Eq. \eqref{eq42} and Theorem \ref{thm31} we can derive 
\bd
\Vert z \Vert_{1,h} \le \frac{32}{15} \Vert \Delta^{+}_{x} \eta_{1} + \Delta^{+}_{y} \eta_{2} + \Delta^{+}_{z} \eta_{3} \Vert_{-1,h}. 
\ed
We can also show that for certain mesh functions 
(e.g. shape regular)  defined on 
$\bar{\Omega}^{h}$ and vanishing on $\partial \Omega_{h}$ we have
\mbox{
$(-\Delta^{+}_{(\cdot)}w,g) = (w,\Delta^{-}_{(\cdot)} g]$}. Hence 
\bd
\begin{array}{ll}
\vert (\Delta^{+}_{x} \eta_{1} + \Delta^{+}_{y} \eta_{2} + \Delta^{+}_{z} \eta_{3},w) \vert
& = \vert (\eta_{1},\Delta^{-}_{x}w]_{x} + (\eta_{2},\Delta^{-}_{y}w]_{y} + (\eta_{3},\Delta^{-}_{z}w]_{z} \vert
\\ & \le 
\vert \Vert \eta_{1} \vert]_{x} \Vert \Delta^{-}_{x}w\vert]_{x} + \Vert\eta_{2} \vert ]_{y} \Vert \Delta^{-}_{y}w\vert]_{y} + \Vert\eta_{3}\vert]_{z} \Vert\Delta^{-}_{z}w \vert ]_{z} \vert 
\\ & \le
\left( \Vert \eta_{1} \vert]_{x} + \Vert \eta_{2} \vert]_{y} + \Vert \eta_{3} \vert]_{z}   \right) \Vert w\Vert_{1,h}.
\end{array}
\ed
Thus, by the definition of the dual norm, we get 
\bd
\begin{array}{l}
\vert (\Delta^{+}_{x} \eta_{1} + \Delta^{+}_{y} \eta_{2} + \Delta^{+}_{z} \eta_{3},w) \vert \Vert w \Vert_{1,h}^{-1}
\le \Vert \eta_{1} \vert]_{x} + \Vert \eta_{2} \vert]_{y} + \Vert \eta_{3} \vert]_{z}   \\
\Vert (\Delta^{+}_{x} \eta_{1} + \Delta^{+}_{y} \eta_{2} + \Delta^{+}_{z} \eta_{3},w) \Vert_{-1,h} 
\le \Vert \eta_{1} \vert]_{x} + \Vert \eta_{2} \vert]_{y} + \Vert \eta_{3} \vert]_{z}.
\end{array}
\ed 
Therefore,
\be\label{eq43}
\Vert u-u^{h} \Vert_{1,h}  \le \frac{32}{15}\left( \Vert \eta_{1} \vert]_{x} + \Vert \eta_{2} \vert]_{y} + \Vert \eta_{3} \vert]_{z} \right).
\ee
Now we have to bound the right--hand side of  \eqref{eq43}. 
Here we only consider the  \mbox{
$\eta_{1}$-term}  as the other
two can be treated in the same way. To this end, 
for a fixed $x$ let $I_{yz}w(x,\cdot,\cdot)$ denote the piecewise interpolant of $w(x,\cdot,\cdot)$ on the mesh 
$\bar{\Omega}_{yz}^{h}$ and
\bd
\begin{array}{ll}
&(\mu_{yz} u )(x,y_{j},z_{k}) = \frac{1}{16} \frac{1}{\newhbar^{y}_{j}\newhbar^{z}_{k}} \left( 
h^{y}_{j} h^{z}_{k} u(x,y_{j-1},z_{k-1}) + h^{y}_{j+1} h^{z}_{k} u(x,y_{j+1},z_{k-1})\right. \\ 
& \qquad\left. + 12\newhbar^{y}_{j} \newhbar^{z}_{k} u(x,y_{j},z_{k})
+h^{y}_{j} h^{z}_{k+1} u(x,y_{j-1},z_{k+1}) + h^{y}_{j+1} h^{z}_{k+1} u(x,y_{j+1},z_{k+1})
\right),  
\end{array}
\ed
then 
\bd
(\mu_{yz}u)(x,y_{j},z_{k}) = \frac{1}{\newhbar^{y}_{j} \newhbar^{z}_{k}} \int\limits_{y_{j-1/2}}^{y_{j+1/2}}\int\limits_{z_{k-1/2}}^{z_{k+1/2}} (I_{yz} u)(x,y,z)\ud z \ud y.
\ed
Further, using

\bd
\begin{array}{ll}
(\mu_{yz}u)_{ijk} - (\mu_{yz}u)_{i-1,j,k} &= \int\limits_{x_{i-1}}^{x_{i}} \frac{\partial}{\partial x} (\mu_{yz}u)(x,y_{j},x_{k})\ud x\\
&=\int\limits_{x_{i-1}}^{x_{i}} \frac{\partial}{\partial x} \frac{1}{\newhbar^{y}_{j} \newhbar^{z}_{k}} \int\limits_{y_{j-1/2}}^{y_{j+1/2}}\int\limits_{z_{k-1/2}}^{z_{k+1/2}} (I_{yz}u)(x,y,z) \ud z \ud y\ud x\\
&=\frac{1}{\newhbar^{y}_{j} \newhbar^{z}_{k}}\int\limits_{x_{i-1}}^{x_{i}}  \int\limits_{y_{j-1/2}}^{y_{j+1/2}}\int\limits_{z_{k-1/2}}^{z_{k+1/2}} \frac{\partial}{\partial x} (I_{yz}u)(x,y,z) \ud z \ud y\ud x\\
&=\frac{1}{\newhbar^{y}_{j} \newhbar^{z}_{k}}\int\limits_{x_{i-1}}^{x_{i}}  \int\limits_{y_{j-1/2}}^{y_{j+1/2}}\int\limits_{z_{k-1/2}}^{z_{k+1/2}}  I_{yz}\left (\frac{\partial u}{\partial x}\right)(x,y,z) \ud z \ud y\ud x, 
\end{array}
\ed
we can write $(\eta_{1})_{ijk}$ as 
\bd
(\eta_{1})_{ijk} = \frac{1}{\newhbar^{x}_{i}\newhbar^{y}_{j}\newhbar^{z}_{k}} \int\limits_{x_{i-1}}^{x_{i}}  \int\limits_{y_{j-1/2}}^{y_{j+1/2}}\int\limits_{z_{k-1/2}}^{z_{k+1/2}}
\left( \frac{1}{2}\frac{\partial u}{\partial x}(x_{i-1/2},y,z) - T_{011}\left(\frac{\partial u}{\partial x}\right)(x,y,z) \right)
\ud z \ud y \ud x .
\ed
Now we split $(\eta_{1})_{ijk}$ into a sum of four terms: 
\bd
\begin{array}{rcl}
(\eta_{11})_{ijk} & = & \frac{1}{\newhbar^{x}_{i}\newhbar^{y}_{j}\newhbar^{z}_{k}} \int\limits_{x_{i-1}}^{x_{i}}  \int\limits_{y_{j}}^{y_{j+1/2}}\int\limits_{z_{k}}^{z_{k+1/2}}
\left( \frac{1}{2}\frac{\partial u}{\partial x}(x_{i-1/2},y,z) - T_{011}\left(\frac{\partial u}{\partial x}\right)(x,y,z) \right)
\ud z \ud y \ud x ,\\
(\eta_{12})_{ijk}& = &\frac{1}{\newhbar^{x}_{i}\newhbar^{y}_{j}\newhbar^{z}_{k}} \int\limits_{x_{i-1}}^{x_{i}}  \int\limits_{y_{j}}^{y_{j+1/2}}\int\limits_{z_{k-1/2}}^{z_{k}}
\left( \frac{1}{2}\frac{\partial u}{\partial x}(x_{i-1/2},y,z) - T_{011}\left(\frac{\partial u}{\partial x}\right)(x,y,z) \right)
\ud z \ud y \ud x, \\
(\eta_{13})_{ijk}& = &\frac{1}{\newhbar^{x}_{i}\newhbar^{y}_{j}\newhbar^{z}_{k}} \int\limits_{x_{i-1}}^{x_{i}}  \int\limits_{y_{j-1/2}}^{y_{j}}\int\limits_{z_{k}}^{z_{k+1/2}}
\left( \frac{1}{2}\frac{\partial u}{\partial x}(x_{i-1/2},y,z) - T_{011}\left(\frac{\partial u}{\partial x}\right)(x,y,z) \right)
\ud z \ud y \ud x, \\
(\eta_{14})_{ijk}& =& \frac{1}{\newhbar^{x}_{i}\newhbar^{y}_{j}\newhbar^{z}_{k}} \int\limits_{x_{i-1}}^{x_{i}}  \int\limits_{y_{j-1/2}}^{y_{j}}\int\limits_{z_{k-1/2}}^{z_{k}}
\left( \frac{1}{2}\frac{\partial u}{\partial x}(x_{i-1/2},y,z) - T_{011}\left(\frac{\partial u}{\partial x}\right)(x,y,z) \right)
\ud z \ud y \ud x. 
\end{array}
\ed
Thus, to estimate $\eta_{1}$ it suffices to estimate 
$\eta_{11}$, $\eta_{13}$, $\eta_{13}$ and $\eta_{14}$.
Here, we only show how to estimate $\eta_{11}$ as the other three terms 
will follow in the same way. 
We introduce the change of variables 
\bd
x=x_{i-1/2}+sh^{x}_{i},~~~-\frac{1}{2} \le s \le \frac{1}{2}; \quad 
y=y_{j}+th^{y}_{j+1},~~~ 0 \le t \le 1; \quad
z=z_{k}+rh^{z}_{k+1},~~~ 0 \le r \le 1, 
\ed
and define
\bd
\tilde{v}(s,t,r) := h^{x}_{i}\frac{\partial u}{\partial x}(x(s),y(t),z(r)).
\ed
This gives us 
\bd
(\eta_{11})_{ijk} = \frac{h^{y}_{j+1}h^{z}_{k+1}}{h^{x}_{i}\newhbar^{y}_{j}\newhbar^{z}_{k}} \tilde{\eta}_{11},
\ed
with
\bd
\tilde{\eta}_{11} = \int\limits_{-1/2}^{1/2}\int\limits_{0}^{1/2}\int\limits_{0}^{1/2}
\frac{1}{2}\tilde{v}(0,t,r) - \left(\tilde{v}(s,0,0)(1-t-r)+ \tilde{v}(s,1,0)t +\tilde{v}(s,0,1)r\right)
\ud r \ud t \ud s. 
\ed
Note that $\tilde{v}(0,t,r) = \frac{\partial u}{\partial x}(x_{i-1/2},y,z)$ and 
$$
\tilde{v}(s,0,0)(1-t-r)+ \tilde{v}(s,1,0)t +\tilde{v}(s,0,1)r=(I_{yz}\frac{\partial u}{\partial x})(x,y,z). 
$$
Hence we treat $\tilde{\eta}_{11}$ as a linear functional with the argument $\tilde{v}$ defined on $H^{\sigma}(\tilde{\omega})$
$\sigma>1/2$, 
where $\tilde{\omega}=(-\frac{1}{2},\frac{1}{2})\times(0,1)\times(0,1)$.
Note that $\sigma >1/2$ is due to the fact that all 
$\eta$ components, defined by $T_{111}$, are convolutions with 
the characteristic function $\xi_{i,j,k}$. Since 
$\xi\in H^\tau({\mathbb R}^3),\,\,\tau<1/2$, continuity requires 
$\sigma >1/2$. 
Notice further that, for a given $\tilde{v}$, $\tilde{\eta}_{11}$ is constant
and its value on the boundary is the same as anywhere inside the domain.
Therefore,  by the trace theorem we have 
\bd
\vert \tilde{\eta}_{11} \vert \le C \Vert \tilde{v} \Vert_{H^{\sigma}(\tilde{\omega})}, \qquad \sigma>1/2, 
\ed
and using Theorem \ref{thm42} with 
$W=H^{\sigma}(\tilde{\omega})$, $W_{1}=L_{2}(\tilde{\omega})$, 
$S_{1}=\vert \tilde{\eta}_{11}\vert$, \\
$S_{2}=\left(\vert \cdot \vert^2_{H^{\sigma,0,0}(\tilde{\omega)}} 
+\vert \cdot \vert^2_{H^{0,\sigma,0}(\tilde{\omega)}}
+\vert \cdot \vert^2_{H^{0,0,\sigma}(\tilde{\omega)}}
\right)^{1/2}$ and with $A:H^{\sigma}(\tilde{\omega})\to L_{2}(\tilde{\omega})$ 
being the compact embedding operator
we obtain
\bd
\vert \tilde{\eta}_{11}(\tilde{v}) \vert \le C \left(\vert \tilde{v} \vert_{H^{\sigma,0,0}(\tilde{\omega)}}^{2} 
+\vert \tilde{v} \vert_{H^{0,\sigma,0}(\tilde{\omega)}}^{2} +\vert \tilde{v} \vert_{H^{0,0,\sigma}(\tilde{\omega)}}^{2}\right)^{1/2}
\ed
for $\sigma>1/2.$ 
We let now $\omega^{++}_{ijk}= (x_{i-1},x_{i}) \times (y_{j},y_{j+1})\times(z_{k}, z_{k+1})$, then 
returning to the original variables we obtain

{\small
\bd
\vert \tilde{\eta}_{11} \vert^{2} \le C \left(
\frac{h^{x^{2}}_{i}}{{h^{x^{2\sigma}}_{i}}}{h^{x}_{i}h^{y}_{j+1}h^{z}_{k}}
\vert \frac{\partial u}{\partial x} \vert_{H^{\sigma,0,0}(\omega^{++}_{ijk})}^{2}
+
\frac{{h^{x^{2}}_{i}}{h^{y^{2\sigma}}_{j+1}}}{h^{x}_{i}h^{y}_{j+1}h^{z}_{k}}
\vert \frac{\partial u}{\partial x} \vert_{H^{0,\sigma,0}(\omega^{++}_{ijk})}^{2}
+
\frac{{h^{x^{2}}_{i}}{h^{z^{2\sigma}}_{k+1}}}{h^{x}_{i}h^{y}_{j+1}h^{z}_{k}}
\vert \frac{\partial u}{\partial x} \vert_{H^{0,0,\sigma}(\omega^{++}_{ijk})}^{2}
\right).
\ed
}
Thus
{\footnotesize
\bd
\vert (\eta_{11})_{ijk} \vert^{2} \le C \left(
\frac{h^{y}_{j+1}h^{z}_{k+1}
{h^{x^{2\sigma-1}}_{i}}}{{\newhbar^{y^{2}}_{j}}{\newhbar^{z^{2}}_{k}}} \vert \frac{\partial u}{\partial x}^2 \vert_{H^{\sigma,0,0}(\omega^{++}_{ijk})}
+
\frac{
{h^{y^{2\sigma+1}}_{j+1}}h^{z}_{k+1}}{h^{x}_{i}{\newhbar^{y^{2}}_{j}}{\newhbar^{z^{2}}_{k}}} \vert \frac{\partial u}{\partial x}^2 \vert_{H^{0,\sigma,0}(\omega^{++}_{ijk})}
+
\frac{h^{y}_{j+1}{h^{z^{2\sigma+1}}_{k+1}}}{h^{x}_{i}{\newhbar^{y^{2}}_{j}}{\newhbar^{z^{2}}_{k}}} \vert \frac{\partial u}{\partial x}^2 \vert_{H^{0,0,\sigma}(\omega^{++}_{ijk})}
\right). 
\ed
}
Similar estimates are derived for 
\bd
\begin{array}{lcl}
\omega^{+-}_{ijk} &=& (x_{i-1},x_{i}) \times (y_{j},y_{j+1})\times(z_{k-1}, z_{k}), \\
\omega^{-+}_{ijk} &=& (x_{i-1},x_{i}) \times (y_{j-1},y_{j})\times(z_{k}, z_{k+1}), \\
\omega^{--}_{ijk} &=& (x_{i-1},x_{i}) \times (y_{j-1},y_{j})\times(z_{k-1}, z_{k}),
\end{array}
\ed
leading to 
{\small
\bd
\begin{array}{l}
\vert (\eta_{12})_{ijk} \vert^{2}  \le C \left(
\frac{h^{y}_{j+1}h^{z}_{k}{h^{x^{2\sigma-1}}_{i}}}{{\newhbar^{y^{2}}_{j}}{\newhbar^{z^{2}}_{k}}} \vert \frac{\partial u}{\partial x}^2 \vert_{H^{\sigma,0,0}(\omega^{+-}_{ijk})}
+
\frac{{h^{y^{2\sigma+1}}_{j+1}}h^{z}_{k}}{h^{x}_{i}{\newhbar^{y^{2}}_{j}}{\newhbar^{z^{2}}_{k}}} \vert \frac{\partial u}{\partial x}^2 \vert_{H^{0,\sigma,0}(\omega^{+-}_{ijk})}
+
\frac{h^{y}_{j+1}{h^{z^{2\sigma+1}}_{k}}}{h^{x}_{i}{\newhbar^{y^{2}}_{j}}{\newhbar^{z^{2}}_{k}}} \vert \frac{\partial u}{\partial x}^2 \vert_{H^{0,0,\sigma}(\omega^{+-}_{ijk})}
\right)
\\
\vert (\eta_{13})_{ijk} \vert^{2}  \le  C \left(
\frac{h^{y}_{j}h^{z}_{k+1}{h^{x^{2\sigma-1}}_{i}}}{{\newhbar^{y^{2}}_{j}}{\newhbar^{z^{2}}_{k}}} \vert \frac{\partial u}{\partial x}^2 \vert_{H^{\sigma,0,0}(\omega^{-+}_{ijk})}
+
\frac{{h^{y^{2\sigma+1}}_{j}}h^{z}_{k+1}}{h^{x}_{i}{\newhbar^{y^{2}}_{j}}{\newhbar^{z^{2}}_{k}}} \vert \frac{\partial u}{\partial x}^2 \vert_{H^{0,\sigma,0}(\omega^{-+}_{ijk})}
+
\frac{h^{y}_{j}{h^{z^{2\sigma+1}}_{k+1}}}{h^{x}_{i}{\newhbar^{y^{2}}_{j}}{\newhbar^{z^{2}}_{k}}} \vert \frac{\partial u}{\partial x}^2 \vert_{H^{0,0,\sigma}(\omega^{-+}_{ijk})}
\right)
\\
\vert (\eta_{14})_{ijk} \vert^{2}  \le  C \left(
\frac{h^{y}_{j}h^{z}_{k}{h^{x^{2\sigma-1}}_{i}}}{{\newhbar^{y^{2}}_{j}}{\newhbar^{z^{2}}_{k}}} \vert \frac{\partial u}{\partial x}^2 \vert_{H^{\sigma,0,0}(\omega^{--}_{ijk})}
+
\frac{{h^{y^{2\sigma+1}}_{j}}h^{z}_{k}}{h^{x}_{i}{\newhbar^{y^{2}}_{j}}{\newhbar^{z^{2}}_{k}}} \vert \frac{\partial u}{\partial x}^2 \vert_{H^{0,\sigma,0}(\omega^{--}_{ijk})}
+
\frac{h^{y}_{j}{h^{z^{2\sigma+1}}_{k}}}{h^{x}_{i}{\newhbar^{y^{2}}_{j}}{\newhbar^{z^{2}}_{k}}} \vert \frac{\partial u}{\partial x}^2 \vert_{H^{0,0,\sigma}(\omega^{--}_{ijk})}
\right).
\end{array}
\ed }
Writing $h=\max_{i,j,k}(h^{x}_{i},h^{y}_{j},h^{z}_{k})$, 
by the super--additivity of the Sobolev norm on a family of 
disjoint Lebesgue measurable subsets of $\Omega$,
\be
\begin{array}{rcl}
\Vert \eta_{1} \vert]_{x}^{2} & \le & Ch^{2\sigma}\left( \vert \frac{\partial u}{\partial x} \vert_{H^{\sigma,0,0}(\Omega)}^{2} + \vert \frac{\partial u}{\partial x} \vert_{H^{0,\sigma,0}(\Omega)}^{2} + \vert \frac{\partial u}{\partial x} \vert_{H^{0,0,\sigma}(\Omega)}^{2}\right),\\
\Vert \eta_{2} \vert]_{y}^{2} & \le & Ch^{2\sigma}\left( \vert \frac{\partial u}{\partial y} \vert_{H^{\sigma,0,0}(\Omega)}^{2} + \vert \frac{\partial u}{\partial y} \vert_{H^{0,\sigma,0}(\Omega)}^{2} + \vert \frac{\partial u}{\partial y} \vert_{H^{0,0,\sigma}(\Omega)}^{2}\right),\\
\Vert \eta_{3} \vert]_{z}^{2} & \le & Ch^{2\sigma}\left( \vert \frac{\partial u}{\partial z} \vert_{H^{\sigma,0,0}(\Omega)}^{2} + \vert \frac{\partial u}{\partial z} \vert_{H^{0,\sigma,0}(\Omega)}^{2} + \vert \frac{\partial u}{\partial z} \vert_{H^{0,0,\sigma}(\Omega)}^{2}\right).
\end{array}
\ee  
All together we arrive at,
\bd
\Vert u-u^{h}\Vert_{1,h} \le Ch^{\sigma}\vert u \vert_{H^{1+\sigma}(\Omega)},
\ed
for $1/2< \sigma \le 2$. 
\end{proof}
From the above calculus we can see that the proof will also carry over to the 
$d$--dimensional case,  but then $C$ will depend on $d$.

In \cite{Suli:91} it is shown that on a two--dimensional quasi--uniform mesh (i.e. there is a constant $C_{\ast}$
such that $h:=\max_{i,j}(h^{x}_{i},h^{y}_{j}) \le C_{\ast}\min_{i,j}(h^{x}_{i},h^{y}_{j})$) 
the finite volume method of \mbox{Eq. \eqref{eqfvm}}
is almost optimally accurate
in the discrete (over the mesh points) maximum \mbox{
norm $\Vert \cdot \Vert_{\infty}$,}
i.e. for $u\in H^{1+\sigma}(\Omega), \frac{1}{2}<\sigma \le 2$ we have 
\bd
\Vert u - u^{h} \Vert_{\infty} \le Ch^{\sigma} \sqrt{\vert \log h \vert}\vert u\vert_{H^{1+\sigma}(\Omega)},
\ed
where $C$ depends on $C_{\ast}$. This does not hold in the three--dimensional case as, 
\\ 
$$
\Vert u \Vert_{L^{\infty}(\Omega)} \le C \Vert u \Vert_{W^{k}_{p}(\Omega)} ,\qquad k>n/p,
$$
here $k$ is the number of derivatives and $p$ is the parameter of 
the $L_{p}$-space ($\Omega$ should be Lipschitz,
as it is in our case). But $n=3$ and $p=2$, requires $k>
3/2$, and if we use the 
inverse estimate to go down half of a derivative to $H^{1}(\Omega)$, 
then we need to pay with half a power of $h$. Thus in three--dimensional case the result
is ${h}^{\sigma-1/2}$, rather than ${h}^{\sigma}\sqrt \vert \log h\vert$.

\section{Numerical example}
We implemented the finite volume scheme described by  equation 
 \eqref{eqfvm} according to the finite difference scheme
for the equation \eqref{eqfds} in a C++ program called FVM. The code  
is available from the
{URL: \texttt{http://www.math.chalmers.se/\~{}mohammad}}.  The implementation
is general and allows for any dimension of the problem, a user 
defined mesh (through an external
text file) and a user defined data function $f$.
The data function should be in an external dynamically linked library and can be 
parametrized. The user can provide the values of the parameters via a text file at execution.
Therefore the user is completely free to specify a data function.
Furthermore the program can compare the solution to a user defined
function.
Similarly this function is provided inside an external dynamically linked library 
and it can also be parametrized through a text file.

We use the \texttt{ uBLAS} Boost and \texttt{umfpack} libraries for matrix operations. This has the one consequence 
that the sparse solver collapses in the three dimensional case if we increase the mesh size
above $54$ points in all directions. In the two dimensional case we did not observe
any problems with the sparse solver. 
For multidimensional numerical integration we use the 
\texttt{Cuba} library \cite{Hah2007}.
We tested our code for a number of different functions based on the normal distribution density
and on mollifier functions. 
We define the shrunk to the unit cube Gaussian function
in $k$ dimensions as,
$$
u(\vec{x}) = \exp(-\sum\limits_{i=1}^{k}\frac{1}{\tan(\pi x_{i})^{2}})\mathbf{1}_{\mathrm{unit~cube}}(\vec{x})
$$
a mollifier function shrunk to the unit cube in $k$ dimensions as,
$$
u(\vec{x}) = \exp((1-4\Vert \vec{x} - (0.5,\ldots,0.5) \Vert_{2}^{2})^{-1})\mathbf{1}_{\mathrm{unit~cube}}(\vec{x})
$$
and a multidimensional Hicks--Henne sine bump function as,
$$
u(\vec{x}) = \left(\sin\left(2\pi\left(0.25 - \Vert \vec{x} \Vert^{2} \right) \right) \right)^{3}\mathbf{1}_{\{\vec{x} : \sum(x_{i} - 0.5)^{2} \le 0.25 \}}(\vec{x}).
$$
We considered the following 
as the difference of two functions
$G_{1}$ and $G_{2}$ for $\vec{x}$ in the unit cube,
$$
u(\vec{x}) = G_{1}(\vec{x}) - 3\cdot G_{2}(2\vec{x} - 0.5).
$$
$G_{1}$ and $G_{2}$ were both either a Gaussian, mollifier or Hicks--Henne sine bump.

The mesh points were randomly distributed in all dimensions.
We present graphs of $L_{2}$, $H_{1}$ and relative errors 
of our implementation in Figs. \ref{figFuncErrorsGauss}, \ref{figFuncErrorsMoll} and \ref{figFuncErrorsHicksHenne} .
\begin{figure}[!ht]
\begin{center}
\includegraphics[width=0.4\textwidth]{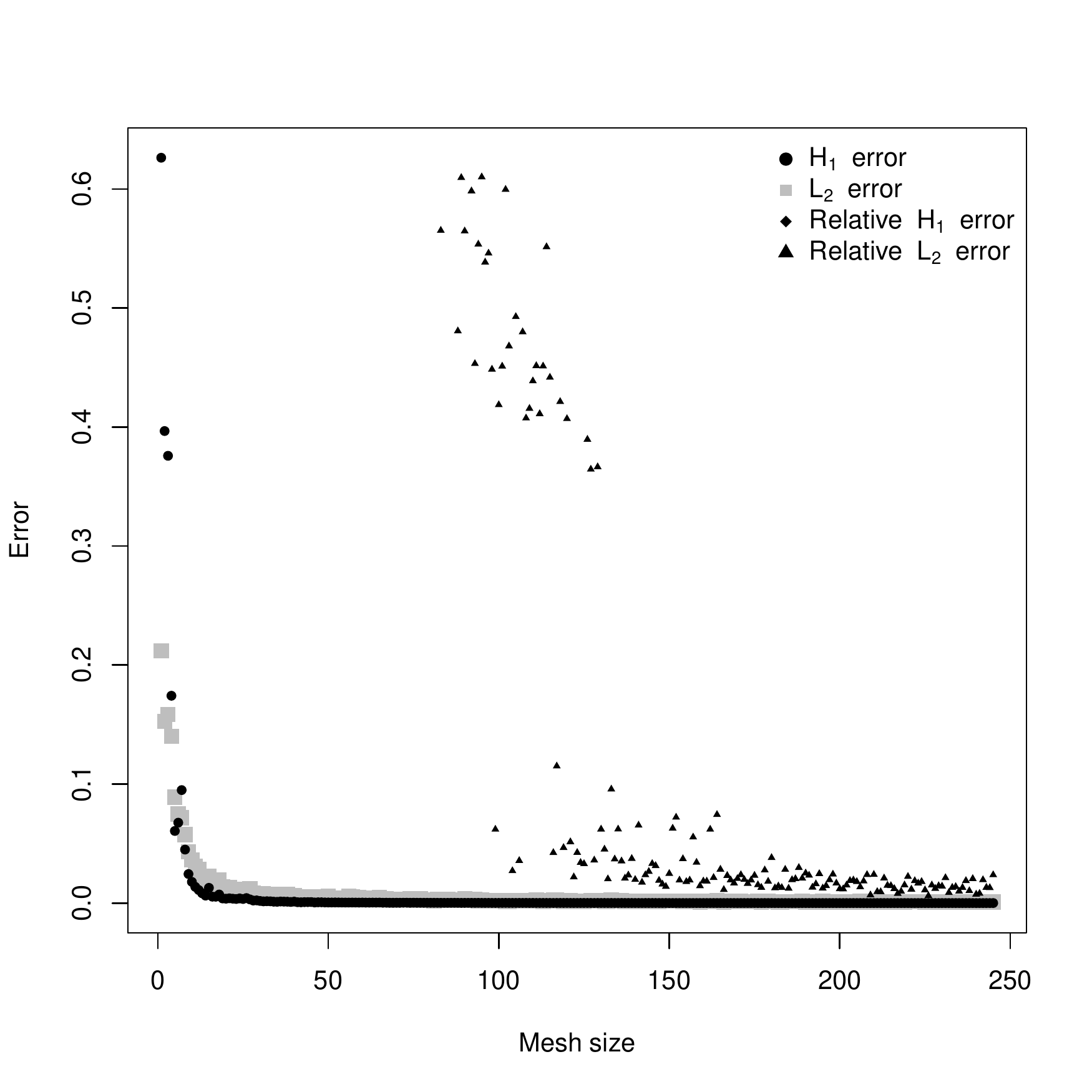}
\includegraphics[width=0.4\textwidth]{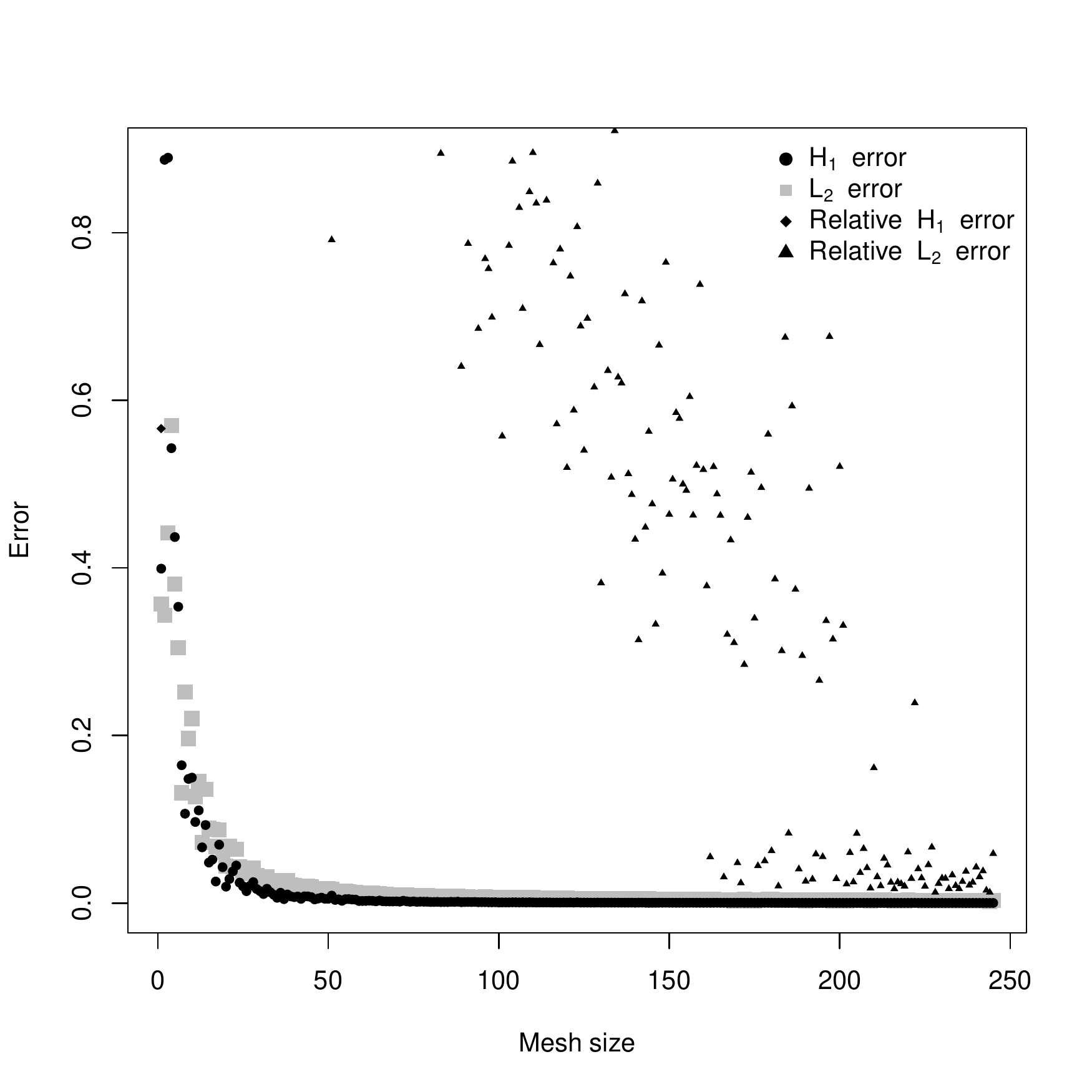} \\
\includegraphics[width=0.4\textwidth]{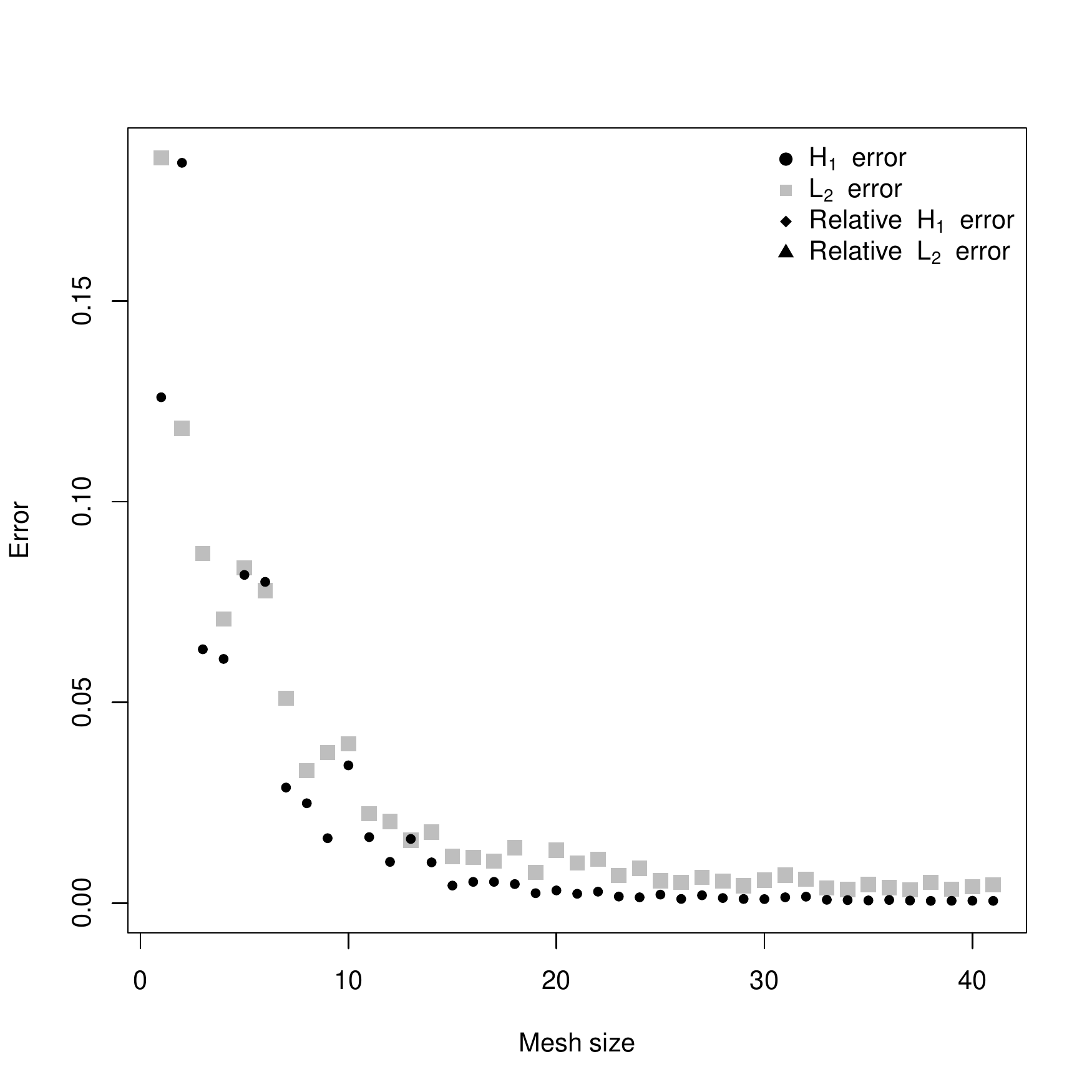} 
\includegraphics[width=0.4\textwidth]{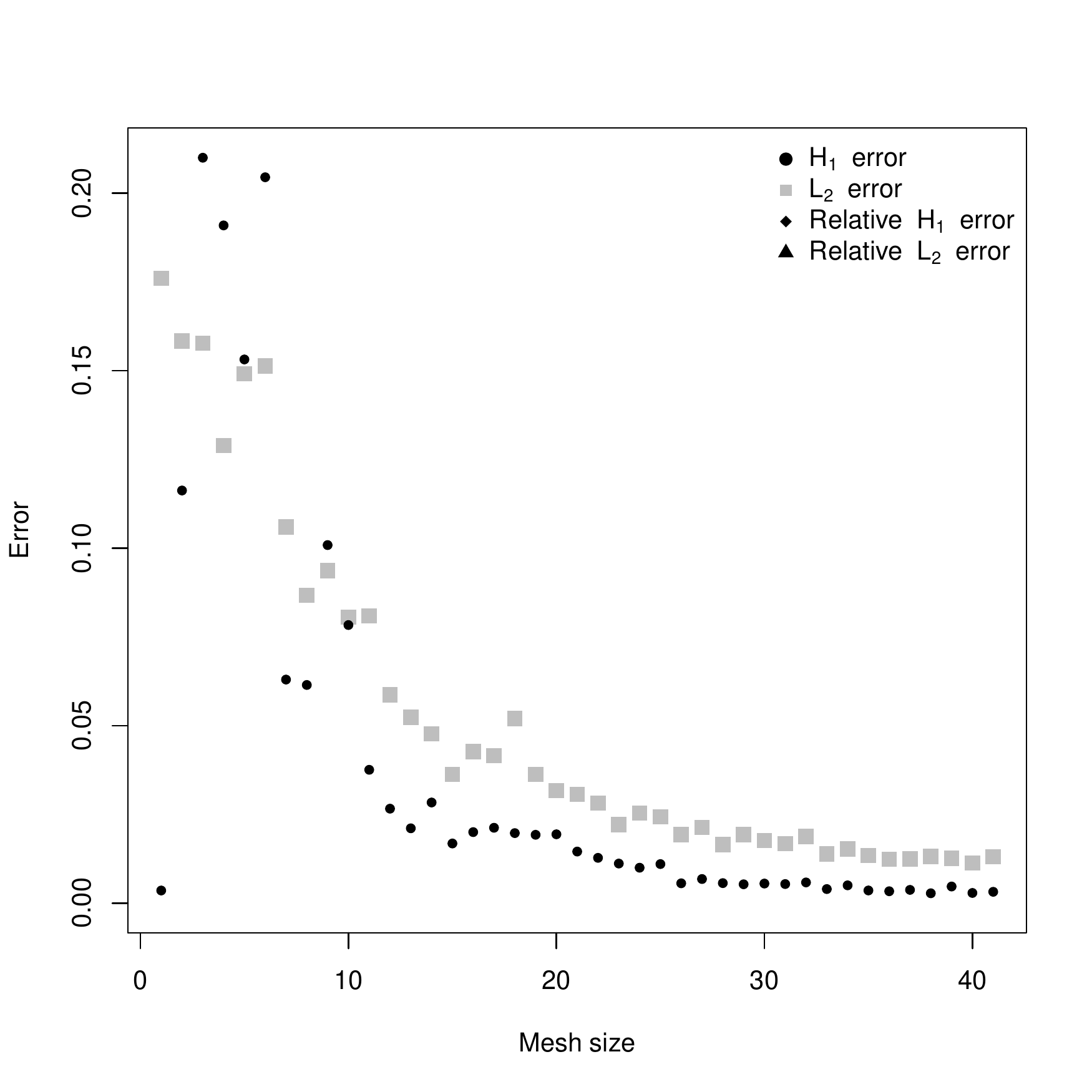} 
\caption{Errors for different functions.
Top: Gaussian function (left) and difference of two Gaussians (right) in two dimensions
and 
bottom: Gaussian function (left) and difference of two Gaussians (right) in three dimensions.
}
\label{figFuncErrorsGauss}
\end{center}
\end{figure}

\begin{figure}[!ht]
\begin{center}
\includegraphics[width=0.4\textwidth]{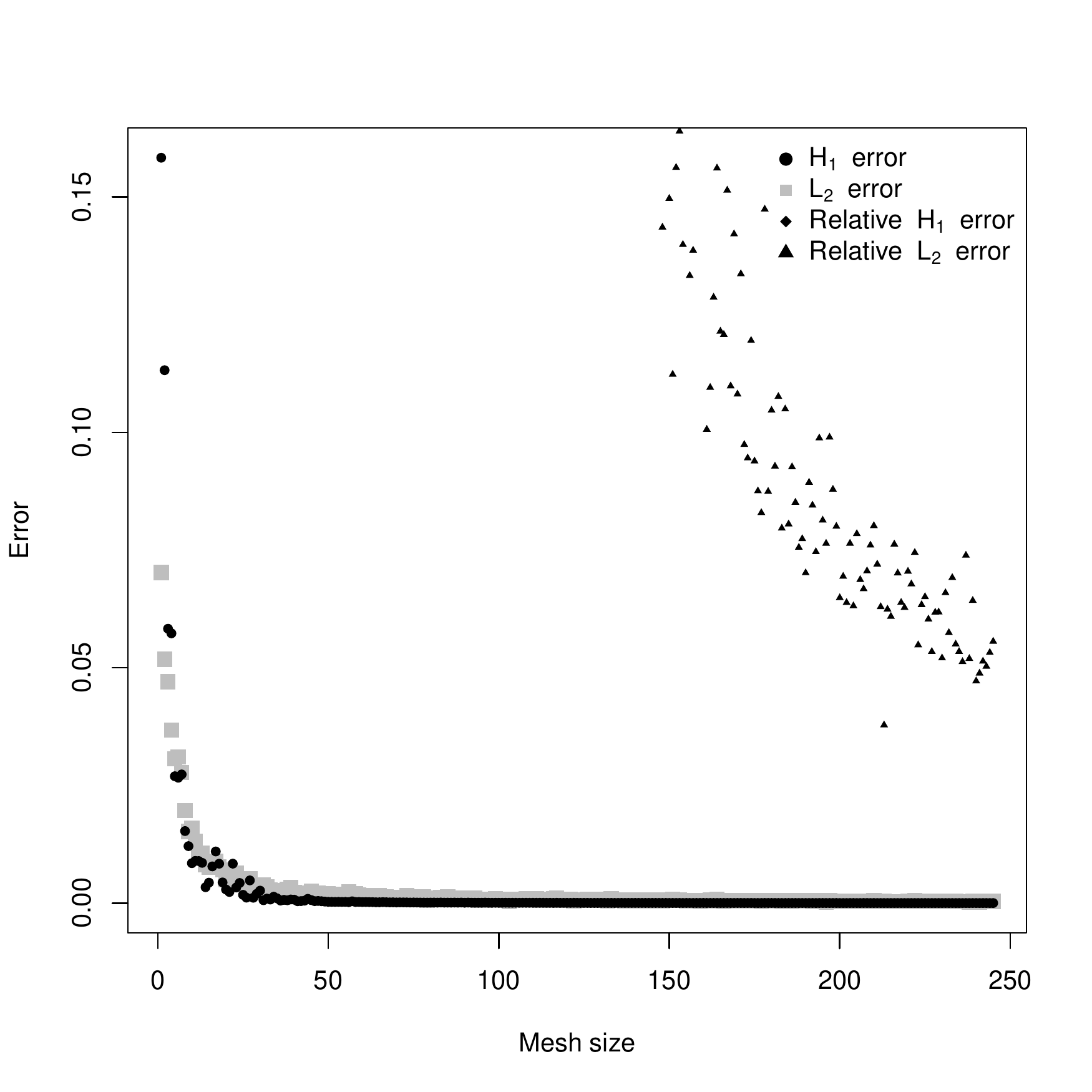}
\includegraphics[width=0.4\textwidth]{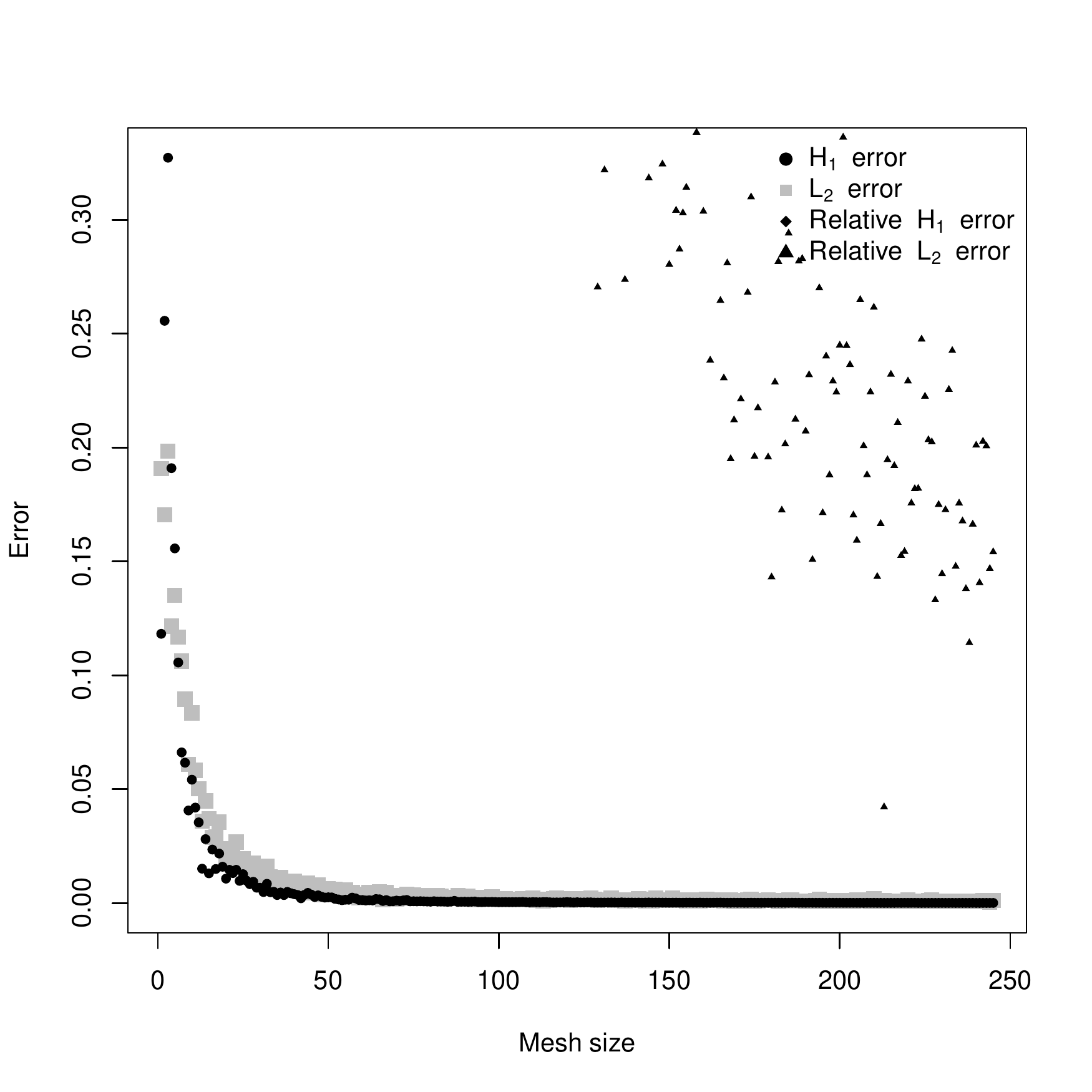}
\caption{Errors for different functions.
Left: mollifier function and right: difference of two mollifiers in two dimensions.
}
\label{figFuncErrorsMoll}
\end{center}
\end{figure}

\begin{figure}[!ht]
\begin{center}
\includegraphics[width=0.4\textwidth]{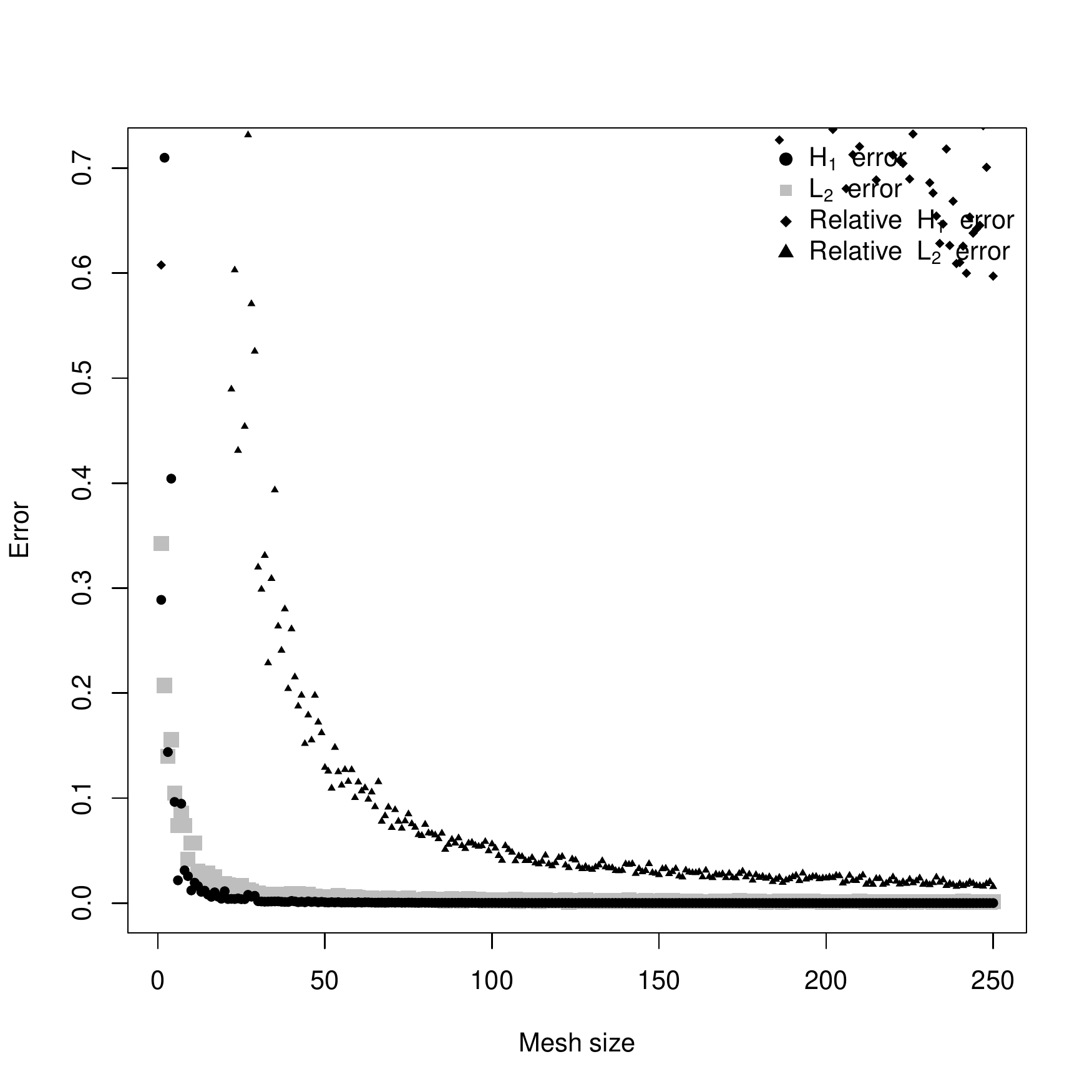}
\includegraphics[width=0.4\textwidth]{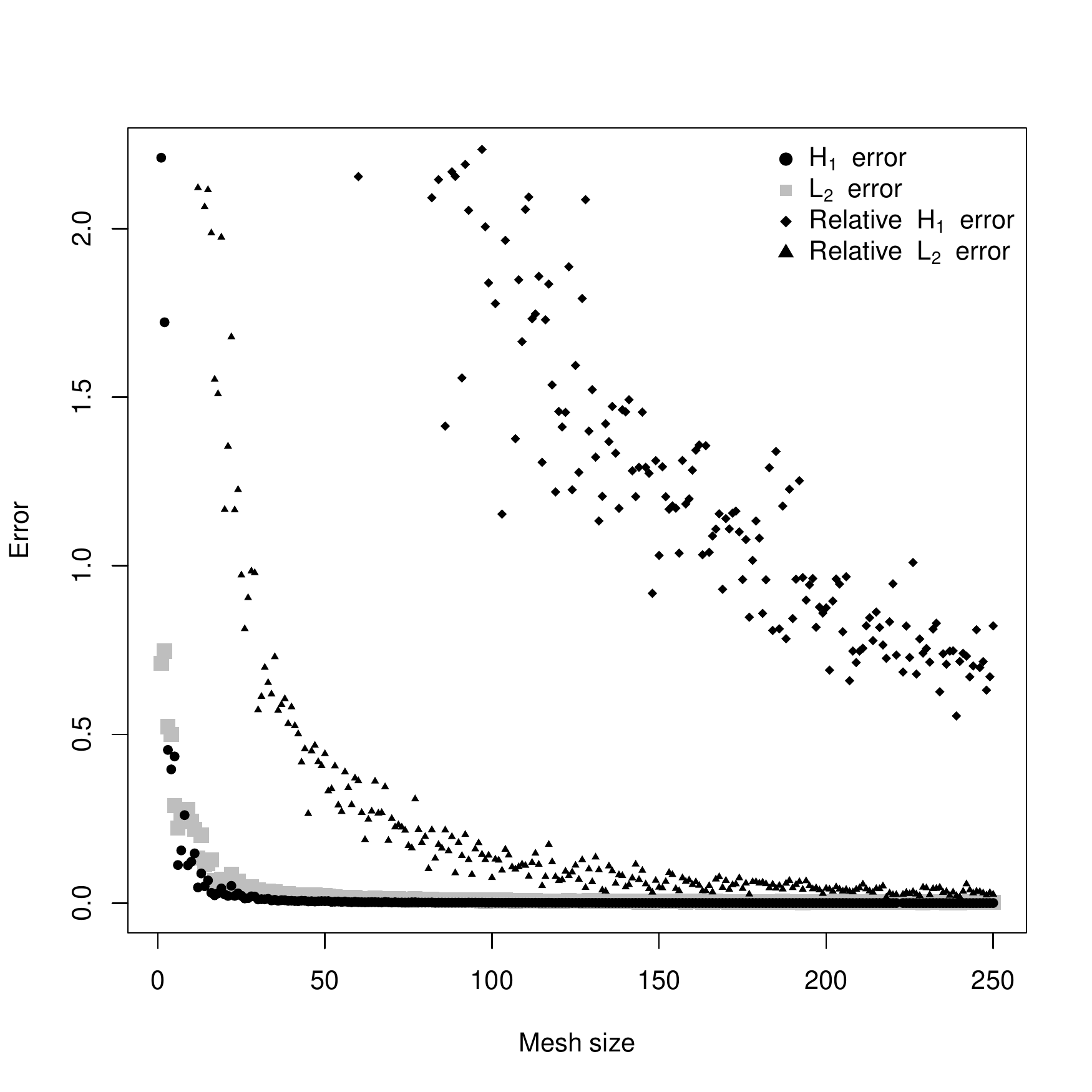} \\
\includegraphics[width=0.4\textwidth]{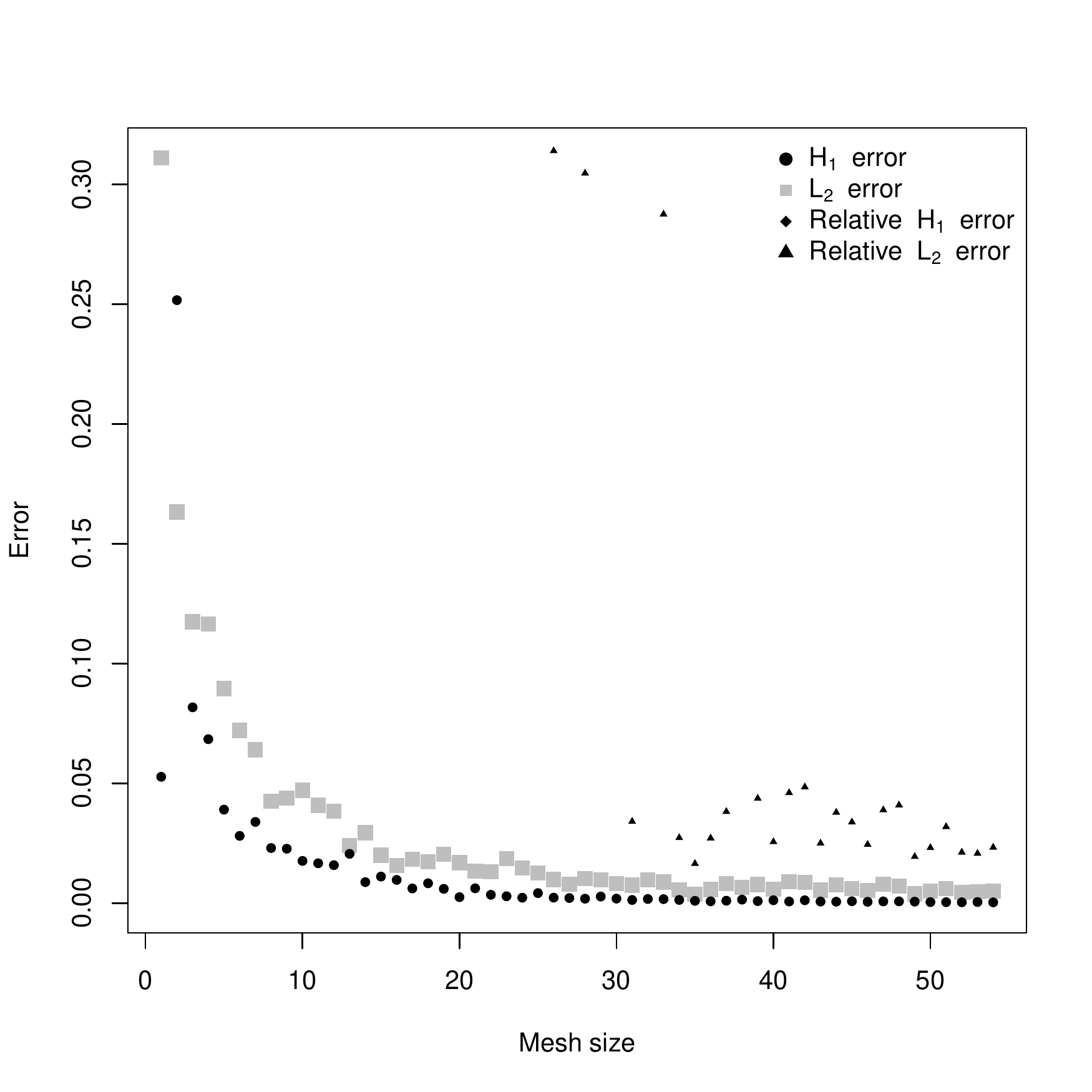} 
\includegraphics[width=0.4\textwidth]{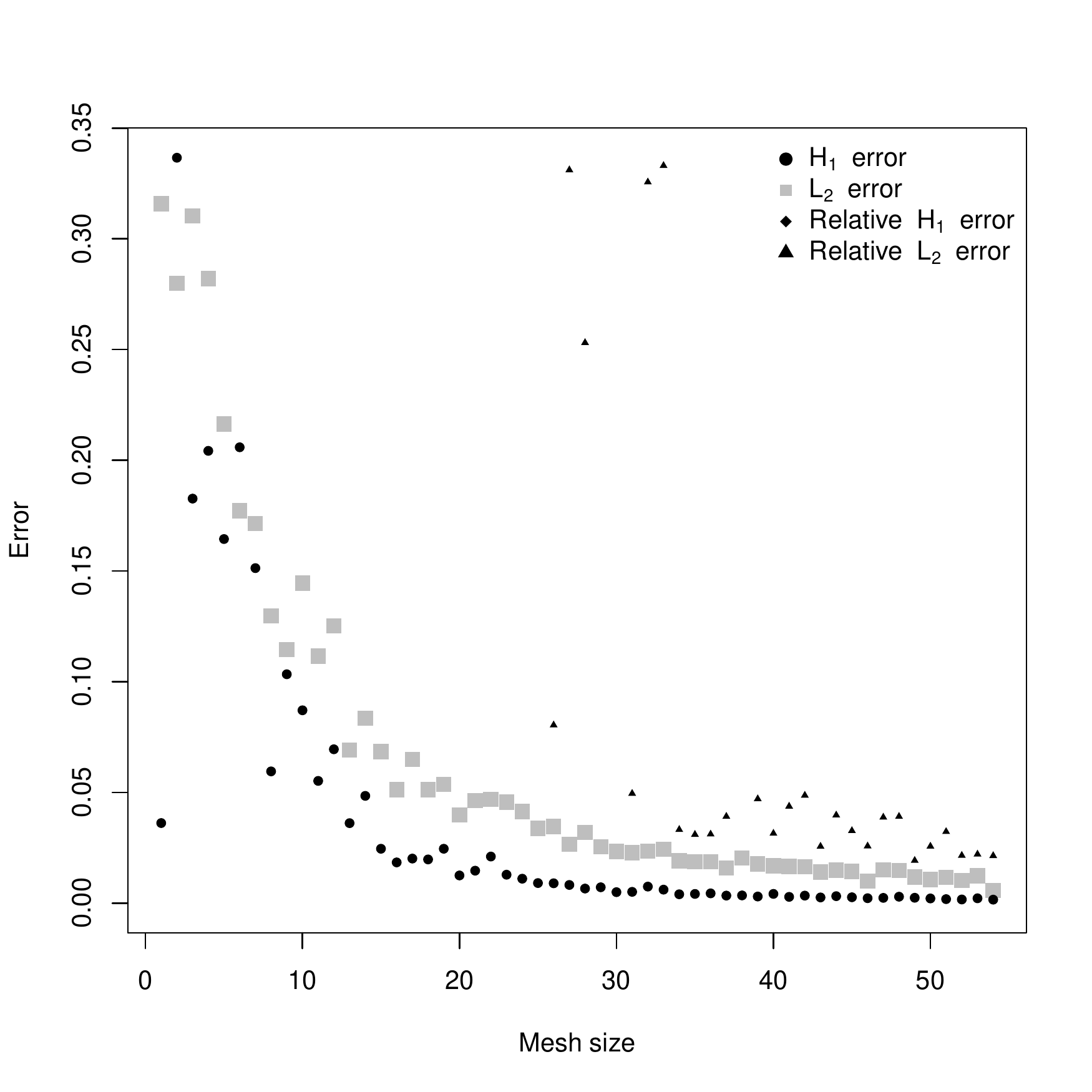} 
\caption{Errors for different functions.
Top: Hicks--Henne sine bump function (left) and difference of two Hicks--Henne sine bump functions (right) in two dimensions,
and
bottom: Hicks--Henne sine bump function (left) and difference of two Hicks--Henne sine bump functions (right) in three dimensions
}
\label{figFuncErrorsHicksHenne}
\end{center}
\end{figure}


\section*{Appendix}
\begin{theorem}\label{thmTartar}
Let $E$ be a Banach space  and let $E_{0}$, $E_{1}$ and $F$ be three  normed linear spaces,
$A_{0}$, $A_{1}$ and $L$ be linear continuous operators from $E$ into $E_{0}$, $E_{1}$ and $F$ respectively. If
\begin{enumerate}[i)]
\item \be\label{eqBoundgE} \Vert g \Vert_{E} = C_{0} \left(\Vert A_{0}g \Vert_{E_{0}} + \Vert A_{1}g \Vert_{E_{1}} \right),\ee
\item $Lg=0$ if $A_{1}g=0$, i.e. $\mathrm{Ker}(L) \subset \mathrm{Ker}(A_{1})$,
\item $A_{0}$ is compact, 
\end{enumerate}
then, there exists a constant $C$ such that,
\be 
\label{eqTartar} \forall_{g \in E},~~~~\Vert Lg \Vert_{F} \le C\Vert A_{1} g \Vert_{E_{1}}. \ee
\end{theorem}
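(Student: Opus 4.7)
The plan is to argue by contradiction and exploit the compactness of $A_{0}$ via a standard Peetre-type extraction. Suppose the desired inequality fails: then for every $n\in\mathbb{N}$ there exists $g_{n}\in E$ with $\|L g_{n}\|_{F} > n \|A_{1} g_{n}\|_{E_{1}}$. Discarding those $n$ for which $L g_{n}=0$ and rescaling, I may assume $\|L g_{n}\|_{F}=1$, so that $\|A_{1} g_{n}\|_{E_{1}} \to 0$ as $n\to\infty$. The goal is then to extract a convergent subsequence whose limit $h$ must simultaneously satisfy $L h=0$ and $\|L h\|_{F}=1$.

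The first key step is to show that $P:=\mathrm{Ker}(A_{1})$ is finite-dimensional. For $g\in P$ assumption (i) reduces to $\|g\|_{E}\le C_{0}\|A_{0}g\|_{E_{0}}$, so $A_{0}|_{P}$ is bicontinuous onto its image; since $A_{0}$ maps bounded sets to precompact ones, the closed unit ball of $P$ is compact in $E$, forcing $\dim P<\infty$ by Riesz's lemma. Because $P$ is finite-dimensional, for each $g_{n}$ I can select a nearest element $p_{n}\in P$ and set $h_{n}:=g_{n}-p_{n}$. Reading (ii) in the sensible direction $\mathrm{Ker}(A_{1})\subset\mathrm{Ker}(L)$, one has $L h_{n}=L g_{n}$ and $A_{1}h_{n}=A_{1}g_{n}$, while $\mathrm{dist}(h_{n},P)=\|h_{n}\|_{E}$ by translation invariance of the infimum.

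The heart of the argument is to prove that $(h_{n})$ is bounded in $E$. If not, pass to a subsequence along which $\|h_{n}\|_{E}\to\infty$ and set $\tilde h_{n}:=h_{n}/\|h_{n}\|_{E}$. Then $\|A_{1}\tilde h_{n}\|_{E_{1}}\to 0$, and since $A_{0}$ is compact a further subsequence makes $(A_{0}\tilde h_{n})$ convergent in $E_{0}$. Applying (i) to $\tilde h_{m}-\tilde h_{n}$ shows that $(\tilde h_{n})$ is Cauchy, so it converges to some $\tilde h\in E$ with $\|\tilde h\|_{E}=1$ and $A_{1}\tilde h=0$, i.e.\ $\tilde h\in P$. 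This contradicts $\mathrm{dist}(\tilde h_{n},P)\ge 1$, which is inherited from the minimality defining $p_{n}$.

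Knowing $(h_{n})$ bounded, the very same compactness-plus-(i) argument extracts a subsequence with $h_{n}\to h$ in $E$. Continuity of $A_{1}$ gives $A_{1}h=0$, hence $h\in P\subset\mathrm{Ker}(L)$ and $Lh=0$; but continuity of $L$ forces $\|L h\|_{F}=\lim\|L h_{n}\|_{F}=1$, the sought contradiction. The main obstacle is not the abstract compactness extraction but setting the sequence up so that (i) actually delivers a Cauchy property: one has no a priori bound on $\|g_{n}\|_{E}$ from $\|L g_{n}\|_{F}=1$ alone, and the device of replacing $g_{n}$ by its residue $h_{n}$ modulo the finite-dimensional kernel $P$ is what converts (i) into a usable compactness tool.
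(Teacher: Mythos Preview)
Your argument is correct and follows the same broad strategy as the paper --- show $P=\mathrm{Ker}(A_{1})$ is finite-dimensional, replace each $g_{n}$ by its residue $h_{n}=g_{n}-p_{n}$ modulo $P$, and extract a convergent subsequence whose limit lies in $P$ --- but the execution differs in two respects. First, the paper factors the conclusion through the intermediate quantity $Q(g)=\inf_{p\in P}\Vert g-p\Vert_{E}$, proving $Q(g)\le C_{1}\Vert A_{1}g\Vert_{E_{1}}$ and $\Vert Lg\Vert_{F}\le C_{2}Q(g)$ separately; you run a single contradiction on $\Vert Lg\Vert_{F}\le C\Vert A_{1}g\Vert_{E_{1}}$ directly, at the cost of an extra sub-argument to show $(h_{n})$ is bounded. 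Second, and more substantively, the paper's extraction passes through a weakly convergent subsequence of the bounded sequence $(\tilde g_{n})$ in $E$, which tacitly requires reflexivity (or a similar hypothesis) not present in the statement; your Cauchy argument via inequality (i) applied to differences $\tilde h_{m}-\tilde h_{n}$ sidesteps this entirely and works in an arbitrary Banach space. In that sense your route is slightly more elementary and more general, while the paper's two-step decomposition isolates the Bramble--Hilbert-type content (the bound on $Q(g)$) that is of independent interest for the application.
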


\begin{proof}
This theorem is an unpublished lemma of Tartar, mentioned as an exercise in \cite{Ciarlet:78}
and cited in \cite{Suli:91}. Both of the works indicate that its proof can be found in
\cite{Brezzi.Marini:75}. The proof starts by noticing that $P:=\mathrm{Ker}(A_{1})$ 
is finite dimensional
however the argument for this in \cite{Brezzi.Marini:75} is that if weak sequential convergence
implies norm convergence then it indicates that $P$ is finite dimensional. This argument is not
clear however as due to Schur \cite{JSchur:1968} we have that in $l^{1}$, 
weak sequential convergence is
equivalent to norm convergence. Below we provide an alternative proof.

We will use the property that a unit ball is compact if and only if 
 the subspace is finite dimensional.
Let us take $g \in P = \mathrm{Ker}(A_{1}) \subset E$ and 
of course we have 
$$ \Vert g \Vert_{P} \equiv  \Vert g \Vert_{E} \le C_{0} \left(\Vert A_{0}g \Vert_{E_{0}} + \Vert A_{1}g \Vert_{E_{1}} \right) =
C_{0} \Vert A_{0}g \Vert_{E_{0}} $$
hence we can write 
$$ \forall_{g \in P}, \quad 
\Vert A_{0}g\Vert_{E_{0}} \ge C\Vert g \Vert_{E} \equiv C \Vert g \Vert_{P}.$$

Let us assume that the kernel of $A_{1}$, $P$ is infinite dimensional and then $P$ is not bounded in particular
not totally bounded and hence will not have a finite $\epsilon$--net, meaning,
$$ 
\exists_{\epsilon > 0},~~~~ \forall_{n}, ~~~~
\exists_{\stackrel{g_{1},\ldots,g_{n}\in P,}{\Vert g_{i} \Vert_{P} \le 1}},~~~~
\Vert g_{i} - g_{j} \Vert_{P} \ge \epsilon.
$$
We assumed that $A_{0}$ is compact so (denoting by $K_{E}$ the unit ball in $E$ and by $K_{0}$ unit ball in $E_{0}$) 
$$ A_{0} (K_{E}) \subseteq \Vert A_{0} \Vert K_{0},$$ 
due to
$$\forall_{x \in K_{E}}, \quad \Vert  A_{0}x \Vert \le \Vert A_{0} \Vert\Vert x\Vert_{E} . $$
With $P$ being infinite dimensional we can write,
$$\Vert A_{0}g_{i} - A_{0}g_{j} \Vert_{E_{0}} \ge  C\Vert g_{i} - g_{j}\Vert_{E} \equiv C\Vert g_{i} - g_{j}\Vert_{P} \ge C \epsilon.$$
This means that $A_{0}(K_{E})$ does not have a finite $\epsilon$--net, 
 so $A_{0}(K_{E})$ would not be relatively compact contradicting that
$A_{0}$ is compact. Hence $P$ must be finite dimensional.

After establishing that $\dim P < \infty$ one can follow the proof found in 
\cite{Brezzi.Marini:75} but, for the sake of completeness, we repeat it below. 

The proof of Eq. \eqref{eqTartar} will be done in two steps.
For all $g \in E$ we use the notation 
$Q(g):=\inf\limits_{p\in E}\Vert g-p \Vert_{E}$.
\begin{description}
\item[I] First we shall prove that there exists a constant $C_{1}$ such that 
\be\label{eqProofI} \forall_{g\in E},~~~~  Q(g) \le C_{1} \Vert A_{1} g \Vert_{E_{1}} . \ee
\item[II] Secondly we shall show that there exists a constant $C_{2}$ such that
\be\label{eqProofII} \forall_{g\in E},~~~~ \Vert Lg \Vert_{F} \le C_{2} Q(g) , \ee
giving Eq. \eqref{eqTartar}: $\Vert Lg \Vert_{F} \le C\Vert A_{1} g \Vert_{E_{1}}$.

\vskip 0.3cm 
\noindent 
Proof of {\bf I.}  We prove the inequality \eqref{eqProofI} by a 
contradiction argument: 
assume that there is a sequence $\{g_{n} \}\subset E$
such that $\Vert A_{1}g_{n}\Vert_{E_{1}}\to 0$ and $Q(g_{n})=1$, i.e.,
$$ \forall_{n}, ~~~\exists_{\{g_{n}\}}:\quad  Q(g_{n}) > n \Vert A_{1}g_{n}\Vert_{E_{1}}$$
and for convenience we can rescale $1>\frac{n}{Q(g_{n})}\Vert A_{1}g_{n}\Vert_{E_{1}}$, 
so we can take $Q(g_{n})=1$.

As $P$ is finite dimensional and totally bounded (hence compact) there exists a sequence $\tilde{g}_{n}=g_{n}-p_{n}$ such that,
$$\Vert \tilde{g}_{n} \Vert_{E} = Q(g_{n}) = \inf\limits_{p\in P}\Vert  g_{n} - p \Vert_{E} = \Vert g_{n} - p_{n}\Vert_{E}. $$
Therefore we have $\Vert A_{1}\tilde{g}_{n} \Vert_{E_{1}} = \Vert A_{1}g_{n}\Vert_{E_{1}} \to 0 $
as $A_{1}p_{n}=0$. Since the sequence $\{ \tilde{g}_{n} \}$ is bounded in $E$ ($\Vert \tilde{g}_{n} \Vert_{E}=1$ as $Q(g_{n})=1$)
it will contain a weakly convergent subsequence $\tilde{g}_{n_{k}} \rightharpoonup g^{\ast} \in E$ giving
$A_{0}\tilde{g}_{n_{k}} \stackrel{E_{0}}{\longrightarrow} A_{0}g^{\ast} $
and
$A_{1}g_{n_{k}} \stackrel{E_{1}}{\longrightarrow} A_{1}g^{\ast} $
implying
$A_{1}\tilde{g}_{n_{k}} \stackrel{E_{1}}{\longrightarrow} A_{1}g^{\ast}=0. $ 
Combining this and Eq. \eqref{eqBoundgE} we get $g_{n_{k}} \stackrel{E}{\longrightarrow} g^{\ast}$
as
$$\Vert g_{n_{k}} -g^{\ast}\Vert_{E} \le C_{0}\left(\Vert A_{0}g_{n_{k}} - A_{0} g^{\ast} \Vert_{E_{0}} + \Vert A_{1}g_{n_{k}} - A_{1} g^{\ast} \Vert_{E_{1}} \right) $$
giving $\inf\limits_{p\in P} \Vert \tilde{g}_{n_{k}} - p \Vert_{E} \le \Vert \tilde{g}_{n_{k}} - g^{\ast} \Vert_{E} \to 0$. 
But this contradicts $Q(g_{n_{k}})=1$ and so there exists a constant $C_{1}$ such that $Q(g) \le C_{1} \Vert A_{1} g\Vert_{E_{1}}$.

\vskip 0.3cm 
\noindent 
Proof of {\bf II.} We now turn to Eq. \eqref{eqProofII}. As we assumed $L$ is continuous and by the assumption of the theorem  $Lp=0$ we
have, 
$$ \Vert Lg\Vert_{F} = \Vert Lg -Lp \Vert_{F} \le C_{2} \Vert g-p \Vert_{E}.
$$
Taking $\inf$ over $p\in P$ on both sides gives,
$$ \Vert Lg\Vert_{F} \le  C_{2} \inf\limits_{p\in P}  \Vert g-p \Vert_{E} = C_{2} Q(g) \le C_{1}C_{2} \Vert A_{1}g \Vert_{E_{1}},$$
as desired.
\end{description}
\end{proof}

\vskip 0.3cm 
\noindent 
{\bf Conclusion.} We construct and analyze 
  a finite volume method 
for Poisson's equation, using a quasi-uniform mesh, in 
the three dimensional cube 
$\Omega =(0,1)\times(0,1)\times (0,1)$. We derive both stability and 
convergence estimates.  
The convergence rates are optimal in an $L_2$-setting, 
whereas the  $L_\infty$ error estimates, which are optimal in 2D, are  
sub-optimal in 3D. 
This generalizes the 
two-dimensional result by S\"uli, \cite{Suli:91} to three dimensions.
We show that the 
underlying theory for the two-dimensional case, 
studied by Grisvard in \cite{Grisvard:85}, is extendable to three dimensions 
(with some draw-back for $L_\infty$ error estimate). We also include a 
corrected proof of a classical result, cited in \cite{Suli:91}, and used in 
convergence analysis. Finally we have implemented the scheme in the 
C++ environment, for a general $k$-dimensional unit cube,  
and for {\sl Gaussians},  {\sl mollifier} and 
multidimensional 
{\sl Hicks-Henne sine bump} functions. The implementations are justifying the 
convergence rates both in $L_2$- and $H^1$- norms. The Figures 2-4 
are showing the 
absolute and relative errors. 

\vskip 0.5cm 
\section*{Acknowledgments}
We would like to thank Wojciech Bartoszek (Gda\'nsk University of Technology) 
for his help in the proof of Theorem \ref{thmTartar}.

\begin{thebibliography}{10}
\bibitem{Adams:75}
R.~A.~Adams, {\sl Sobolev Spaces.} Academic Press, New York-London, 1975. 

\bibitem{Arminjon_etal}
P.~Arminjon and R.~Touma, {\sl Finite Volume Central Schemes for 
3-dimensional Ideal MHD}. Centre de Recherches Mathematiques, Universite de 
Montreal, Canada, 2010. 

\bibitem{Arnold_etal}
D.~Arnold, R.~Falk and J.~Gopalakrishnan, 
{\sl Mixed finite element approximation of the vector Laplacian with
 Dirichlet boundary conditions}. Math. Models Methods Appl. 
Sci. 22 (2012), no. 9, 1250024, 26 pp.     

\bibitem{Asadzadeh:90}
M.~Asadzadeh, {\sl Streamline diffusion methods for The
  {V}lasov-{P}oisson equation}. Math. Model. Numer. Anal.,  
\textbf{24} (1990), no.~2, 177--196. 

\bibitem{Asadzadeh.Kowalczyk:2005}
M.~Asadzadeh and P.~Kowalczyk, 
 {\sl Convergence of Streamline Diffusion Methods for the Vlasov-Poisson-Fokker-Planck System} Numer Methods Partial Differential Eqs., 21 (2005), 472--495. 

\bibitem{Asadzadeh.Bartoszek:2011}
M.~Asadzadeh, and K. Bartoszek, 
{\sl A combined discontinuous Galerkin and finite volume scheme for 
multi-dimensional VPFP system}. Proceedings of 27th RGD, Pacific Grove, 
CA, 10-27 July, 2010. American Institute of Physics (AIP), 
Melville, New York, (2011), pp 57--63. 
\bibitem{Bi_etal:2010} 
C.~Bi and M.~Liu,   {\sl A Discontinuous Finite Volume 
Element Method for Second-Order Elliptic Problems}. 
Oxford University Press, Oxford, 2010. xxviii–746 pp. ISBN: 978-0-19-957704-0. 

\bibitem{Bohmer:2010} 
K.~B\"ohmer, 
{\sl Numerical methods for nonlinear elliptic differential equations. }. 
Numer, Method, PDEs, (2010), 425--440. 
\bibitem{Bouche_etal:2010}
D.~Bouche, J-A.~ Ghidagilia, and F.~ P. Pascal, 
{\sl Theoretical analysis of the upwind finite volume scheme on the 
counter-example of Peterson}. ESAIM: M2AN 44 (2010), 1279--1293. 


\bibitem{Brezzi.Marini:75} 
F.~Brezzi and D.~Marini, {\sl On the numerical solution of some 
plate bending problems by hybrid methods}, 
RAIRO Anal. Numer., 3 (1975), pp. 5--50.

\bibitem{Ciarlet:78}
P.~G.~Ciarlet,  {\sl The Finite Element Method for Elliptic Problems}, 
North-Holland, Amsterdam, 1978.


\bibitem{Coudiere_etal:2011} 
Y.~Coudiere and F.~Hubert, 
{\sl A 3D Discrete duality finite volume method for
 nonlinear elliptic equations}. SIAM, J. Sci. Comput. 
vol 33, No 4, (2011) 1739--1764. 

\bibitem{Drazic:86} 
M.~Dra{\v z}ic,  {\sl Convergence rates of difference approximations 
to weak solutions of the heat transfer equation}, Tech. Report 86/22, 
Oxford University Computing Laboratory, Oxford, 1986. 


\bibitem{Eymard_etal:2003}
R.~Eymard, T.~ Gallou\"et, and R.~Herbin,  
{\sl Finite Volume Methods}, Handbook of Numerical Analysis, 
P. G. Ciarlet, J.L, Lions eds, vol 7, pp 713-1020, (1997). 
Updated (2003).  

\bibitem{Folland:76} 
G.~B.~Folland,  {\sl Introduction to partial differential equations}, 
Princeton University Press, Princeton, 1976.

\bibitem{Fromm:93} 
S.~J.~Fromm,  {\sl Potential space estimates for Green potentials in convex domains.} Proc. Amer. Math. Soc., 119:225–233, 1993.

\bibitem{Grisvard:85} 
 P.~Grisvard, {\sl Elliptic Problems in Non-Smooth Domains}, 
Pitman, 1965. 

\bibitem{Hah2007}
T.~Hahn, {\sl Cuba -- a library for multidimensional numerical integration} Comput. Phys. Commun.,  176:712--713, 2007.

\bibitem{Lin.Thomee.Wahlbin:91} 
Y.~Lin, V.~Thomee and L.~Wahlbin, 
{\sl Ritz-Volterra projections to finite-element spaces and applications 
to integrodifferential and related equations}.  
SIAM J. Numer. Anal.  28  (1991),1047--1070.

\bibitem{J.L.Lions:61} 
J.~L.~ Lions, {\sl Equations diff\'erentielles op\'erationnelle et 
probl\`mes aux limites}, Springer, Berlin, 1961. 

\bibitem{Mazya.Rossmann:2010} 
V.~Mazya and T.~Rossmann,  
{\sl Elliptic Equations in Polyhedral Domains, volume
162 of Mathematical Surveys and Monographs.}
 American Mathematical Society, Providence, RI, 2010. 

\bibitem{Novotny:2004} 
A.~Novotny and I.~Straskraba, 
{\sl Introduction to the mathematical theory of compressible flow}, 
Oxford Lecture Ser. Math. Appl., 27, Oxford Univ. Press, Oxford, 2004.  

\bibitem{Suli_etal:97}
K.~W.~ Morton, M.~Stynes and E.~S\"uli,  
{\sl Analysis of a cell-vertex finite 
 volume method for convection-diffusion problems} Math. Comp. 
  66, number 220 (1997), pp. 1389--1406.

\bibitem{Oevermann_etal:2009}
M.~Oevermann and C.~Scharfenberg and R.~Klein, 
{\sl A shape interface finite volume method for elliptic equations on 
Cartesian grids}. Journal of Computational Physics, vol 228 (2009), 5184--5206. 

\bibitem{JSchur:1968}
J.~ Schur,  {\sl Estimates of the exponential growth of solutions of the second-order linear differential equation}. J. Math. Anal. Appl. 21 1968 7–9. 

\bibitem{Suli:91}
E.~S\"uli,  {\sl Convergence of finite volume schemes for Poisson's equation 
on nonuniform meshes} SIAM, J. , Numer., Anal., 5(1991), pp. 1419--1430. 

\bibitem{Suli:92}
E.~S\"uli,  {\sl The accuracy of cell vertex finite 
 volume methods on quadrilateral meshes} Math. Comp. 
  59 (1992), pp. 359--382. 

\bibitem{Wang_etal:2006}
W.~Wang, T-M.~Hwang and J-C. Jang, 
{\sl A second order finite volume scheme for three dimensional truncated 
pyramidal quantum dot}. Computer Physics Communications, 174, (2006) 
371--385. 

 \end{thebibliography}

\providecommand{\bysame}{\leavevmode\hbox to3em{\hrulefill}\thinspace}

\end{document}